\newcommand     {\comment}[1]   {}
\newcommand{\mute}[2] {}
\newcommand     {\printname}[1] {}
\newtheorem{theorem}{Theorem}[section]
\newtheorem{proposition}[theorem]{Proposition}
\newtheorem{lemma}[theorem]{Lemma}
\newtheorem{claim}[theorem]{Claim}
\newtheorem{corollary}[theorem]{Corollary}
\newcommand{\id}{\mathop{{\rm Id}}\nolimits}
\newcommand{\PSL}{\mathop{{\rm PSL}}\nolimits}
\newcommand{\Aut}{\mathop{{\rm Aut}}\nolimits}
\newcommand{\Ham}{\mathop{{\rm Ham}}\nolimits}
\newcommand{\Emb}{\mathop{{\rm Emb}}\nolimits}
\newcommand{\Diff}{\mathop{{\rm Sympl}}\nolimits}
\newcommand{\Sympl}{\mathop{{\rm Sympl}}\nolimits}
\newcommand{\Dif}{\mathop{{\rm Diff}}\nolimits}
\newcommand{\closed}{\mathop{{\rm closed}}\nolimits}
\newcommand{\exact}{\mathop{{\rm exact}}\nolimits}
\newcommand{\red}{\mathop{{\rm red}}\nolimits}
\newcommand{\R}{{\mathbb R}}
\newcommand{\J}{{\mathcal J}}
\newcommand{\M}{{\mathcal M}}
\newcommand{\X}{{\mathcal X}}
\newcommand{\C}{{\mathbb C}}
\newcommand{\Z}{{\mathbb Z}}
\def \g {{\mathfrak h \mathfrak a \mathfrak m}}
\newcommand{\op}[1]{\!\!\mathop{\rm ~#1}\nolimits}
\newcommand{\scriptop}[1]{\!\!\mathop{\mbox{\rm \scriptsize ~#1}}\nolimits}
\newenvironment{remark}{\refstepcounter{theorem}\par\medskip\noindent{\bf
Remark~\thetheorem~~}}{\unskip\nobreak\hfill\hbox{ $\oslash$}\par\bigskip}
\newenvironment{noTitle}{\refstepcounter{theorem}\par\medskip\noindent{\thetheorem~~}}{\unskip\nobreak\hfill\hbox{ $\oslash$}\par\bigskip}
\newenvironment{definition}{\refstepcounter{theorem}\par\medskip\noindent{\bf
Definition~\thetheorem~~}}{\unskip\nobreak\hfill\hbox{ $\oslash$}\par\bigskip}
\begin{document}

\title{Symplectic forms on the space of embedded symplectic surfaces and their reductions}
\author{Liat Kessler} 

\date{}

\maketitle

\begin{abstract}
 Let $(M,\, \omega)$ be a symplectic manifold, and $(\Sigma, \sigma)$ a closed connected symplectic $2$-manifold. We construct a weakly symplectic form ${\omega^{D}}_{(\Sigma, \, \sigma)}$ on the 
 space of immersions $\Sigma \to M$ that is a special case of Donaldson's form. We show that the restriction of ${\omega^{D}}_{(\Sigma, \, \sigma)}$ to any orbit of the group of Hamiltonian symplectomorphisms through a symplectic embedding $(\Sigma, \, \sigma) \hookrightarrow (M,\, \omega)$ descends to a weakly symplectic form $\omega^D_{\red}$ on the quotient by $\Diff(\Sigma, \, \sigma)$, and that the obtained symplectic space  is a symplectic quotient of  the subspace of symplectic embeddings $\mathcal{S}_{\op{e}}(\Sigma, \, \sigma)$
with respect to the $\Diff(\Sigma, \, \sigma)$-action. We also compare ${\omega^{D}}_{(\Sigma, \, \sigma)}$ and its reduction $\omega^D_{\red}$ to another $2$-form on the space of immersed symplectic $\Sigma$-surfaces in $M$. We conclude by a result on the restriction of  ${\omega^{D}}_{(\Sigma, \, \sigma)}$ to moduli spaces of $J$-holomorphic curves.
  \end{abstract}

\section{Introduction}
Let $(M, \, \, \omega)$ be a compact finite-dimensional symplectic manifold, and $\Sigma$ a closed connected $2$-manifold. Fix  a symplectic form $\sigma$ on $\Sigma$. 
We identify the tangent space to 
$\op{C}^{\infty}(\Sigma,\, M)$ at $f \colon \Sigma \to M$
with the space $\Omega^0(\Sigma,\, f^*(\op{T}\!M))$ 
of  smooth vector fields $\tau \colon \Sigma \rightarrow f^{\ast} (\op{T}\!M)$. 

\begin{definition}
Define a $2$\--form  on $\op{C}^{\infty}(\Sigma,\, M)$ by
  \begin{eqnarray} \label{omegas2:def}
       ({\omega^{D}}_{(\Sigma, \, \sigma)})_f( \tau_1, \,\tau_2):=\int_{\Sigma} \omega_{f(x)}(\tau_1(x),\tau_2(x)) \, \sigma, \nonumber
  \end{eqnarray}
  where  $\tau_1, \tau_2 \in \op{T}_f(\op{C}^{\infty}(\Sigma,\, M))$. 
\end{definition}
The form ${\omega^D}_{(\Sigma, \, \sigma)}$ is a special case of  the  two-form  on the space of smooth maps $S  \to M$ of a compact oriented manifold $S$ equipped with a fixed volume form $\eta$, introduced by Donaldson in \cite{donaldson}. 
Under some topological conditions, e.g., that $H^{1}(S)=0$ and  for all $i \in \op{C}^{\infty}(\Sigma,\, M)$ the class $i^{*}[\omega]$ is the zero class in $H^{2}(S)$,
Donaldson described a moment map for the action of the Lie group of volume preserving diffeomorphisms $\Dif(S, \, \eta)$ on $\op{C}^{\infty}(S,\,M)$. This action restricts to a Hamiltonian action on the subspace of embeddings $\Emb(S,\,M)$.
In \cite{lee}, Brian Lee gives a rigorous formulation of Donaldson's heuristic construction, in the ``Convenient Setup'' of Fr\"olicher, Kriegl, and Michor \cite{conv}, and shows that the form reduces to the image of $\Ham(M, \, \, \omega)$-orbits 
through isotropic embeddings in $\Emb(S,\,M)$ under the projection to the quotient $\Emb(S,\, M) / \Dif(S, \, \eta)$. 
Lee's result does not assume $H^{1}(S)=0$. In this paper, we will omit also the condition $i^{*}[\omega]=0$, and instead of looking at orbits through isotropic embeddings, we will look at orbits through symplectic embeddings. 
Denote by
 $$\mathcal{S}_{\op{e}}(\Sigma, \, \sigma)$$ the subspace of symplectic embeddings $(\Sigma, \, \sigma) \to (M, \, \, \omega)$. 
 The Lie group 
 $\Diff(\Sigma, \, \sigma)$ of diffeomorphisms  of $\Sigma$ that pull back $\sigma$ to itself 
acts freely on  $\mathcal{S}_{\op{e}}(\Sigma, \, \sigma)$ on the right. 
In this paper we study the reduction of ${\omega^{D}}_{(\Sigma, \, \sigma)}$ to $\mathcal{S}_{\op{e}}(\Sigma, \, \sigma)$ modulo $\Sympl(\Sigma, \, \sigma)$ and to moduli spaces of un-parametrized $J$-holomorphic curves. The terms smooth manifold and map, tangent space, and differential form are interpreted in the ``Convenient Setup''. In this framework, the local model is the convenient vector space: a locally convex vector space $E$ with the property that for any smooth (infinitely differentiable) curve  $c_1 \colon \R \to E$ there is a curve $c_2 \colon \R \to E$ such that ${c_2}'={c_1}$, with the $c^{\infty}$-topology: the finest topology for which all smooth curves $\R \to E$ are continuous. (The $c^{\infty}$-topology is finer than the locally convex topology 
on $E$. If $E$ is a Frechet space, (i.e., a complete and metrizable locally convex space), then 
the two topologies coincide.) 
A map between convenient vector spaces is smooth if it sends smooth curves to 
smooth curves. Smooth manifolds are modeled on convenient 
vector spaces via charts, whose transition functions are smooth; a map between smooth manifolds is smooth if it maps smooth curves to smooth curves. (See 
 \cite{conv} and \cite[Sec. 2]{lee}.)

In 
the Appendix we show that the $2$\--form ${\omega^{D}}_{(\Sigma, \, \sigma)}$ is closed and its restriction to the space of immersions $\Sigma \to M$ is weakly non-degenerate. 
We also show that for an almost complex structure $J \colon \op{T}\!M \to \op{T}\!M$ that is compatible with $\omega$, the induced almost complex structure $
  \tilde{J}: \op{T}\!  \op{C}^{\infty}(\Sigma,\, M) \rightarrow 
  \op{T}\! \op{C}^{\infty}(\Sigma,\, M)
  $ is compatible with ${\omega^{D}}_{(\Sigma, \, \sigma)}$. 
 In Section \ref{sec2} we prove  that $\mathcal{S}_{\op{e}}(\Sigma, \, \sigma)$  is a smooth manifold and describe its tangent bundle, see Proposition \ref{model}; we show that the restriction of the form  ${\omega^D}_{(\Sigma, \, \sigma)}$ to $\mathcal{S}_{\op{e}}(\Sigma, \, \sigma)$ is weakly symplectic, see Proposition \ref{sympl3}. In Section \ref{sec3} we prove the following theorem. 
 \begin{theorem} \label{thm1}
 Let $\mathcal{N}$ be a 
$\Ham(M, \, \, \omega)$-orbit in  $\mathcal{S}_{\op{e}}(\Sigma, \, \sigma)$. The restriction of ${\omega^D}_{(\Sigma, \, \sigma)}$ to $\mathcal{N}$ descends to a closed weakly non-degenerate $2$-form $\omega^D_{\red}$ on the image $\mathcal{O}$ in the orbit space under the projection $ q \colon \mathcal{S}_{\op{e}}(\Sigma, \, \sigma) \to \mathcal{S}_{\op{e}}(\Sigma, \, \sigma) / \Diff(\Sigma, \, \sigma).$ The symplectic space $(\mathcal{O},\, \, \omega^D_{\red})$ is a symplectic quotient of $\mathcal{S}_{\op{e}}(\Sigma, \, \sigma)$
with respect to the $\Diff(\Sigma, \, \sigma)$-action. 
\end{theorem}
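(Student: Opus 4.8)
My plan is to deduce Theorem~\ref{thm1} from an explicit description of the null directions of $\omega^{D}_{(\Sigma,\sigma)}$ along the orbit $\mathcal{N}$, followed by a Marsden--Weinstein--type descent; everything reduces to linear algebra along a fixed $f\in\mathcal{N}$. Fix $f\in\mathcal{N}$. Since $\Ham(M,\omega)$ preserves $\omega$, all points of $\mathcal{N}$ have the same pullback form, so $f^{*}\omega=\sigma$, the subbundle $df(\op{T}\!\Sigma)\subset f^{*}\op{T}\!M$ is symplectic, and writing $\nu_{f}:=(df(\op{T}\!\Sigma))^{\omega}$ for the symplectic normal bundle we get an $\omega$\--orthogonal splitting $f^{*}\op{T}\!M=df(\op{T}\!\Sigma)\oplus\nu_{f}$, with $\omega$ fibrewise nondegenerate on $\nu_{f}$. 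A tangent vector to $\mathcal{N}$ at $f$ has the form $\tau=X_{H}\circ f$ with $H\in\op{C}^{\infty}(M)$; decomposing $\tau=df(Y_{\tau})+N_{\tau}$ with $Y_{\tau}\in\Gamma(\op{T}\!\Sigma)$, $N_{\tau}\in\Gamma(\nu_{f})$, one has $\iota_{Y_{\tau}}\sigma=f^{*}(\iota_{\tau}\omega)=\pm\,d(f^{*}H)$, which is exact, so $Y_{\tau}$ is a \emph{Hamiltonian} vector field on $(\Sigma,\sigma)$; conversely $H$ can be chosen constant on $f(\Sigma)$ with prescribed normal $1$\--jet along the closed submanifold $f(\Sigma)$, so $N_{\tau}$ ranges over all of $\Gamma(\nu_{f})$. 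Hence
\[
\op{T}_{f}\mathcal{N}=\{\,df(Y)+N:\ Y\ \text{Hamiltonian on}\ (\Sigma,\sigma),\ N\in\Gamma(\nu_{f})\,\},
\]
and, since $\ker q_{*,f}$ is the tangent to the $\Diff(\Sigma,\sigma)$\--orbit $\{df(X):X\ \text{symplectic on}\ (\Sigma,\sigma)\}$, we get $\ker(q|_{\mathcal{N}})_{*,f}=\op{T}_{f}\mathcal{N}\cap\ker q_{*,f}=\{df(Y):Y\ \text{Hamiltonian}\}$.

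The core step is the evaluation, for $\tau_{1},\tau_{2}\in\op{T}_{f}\mathcal{N}$,
\[
(\omega^{D}_{(\Sigma,\sigma)})_{f}(\tau_{1},\tau_{2})=\int_{\Sigma}\sigma(Y_{\tau_{1}},Y_{\tau_{2}})\,\sigma+\int_{\Sigma}\omega(N_{\tau_{1}},N_{\tau_{2}})\,\sigma ,
\]
the cross terms vanishing because $\nu_{f}\perp_{\omega}df(\op{T}\!\Sigma)$. For $\tau_{i}\in\op{T}_{f}\mathcal{N}$ the $Y_{\tau_{i}}$ are Hamiltonian, $\iota_{Y_{\tau_{i}}}\sigma=dg_{i}$, so by the surface identity $\beta\wedge\iota_{Y}\sigma=\beta(Y)\,\sigma$ the first integrand equals $dg_{1}\wedge dg_{2}=d(g_{1}\,dg_{2})$ and $\int_{\Sigma}\sigma(Y_{\tau_{1}},Y_{\tau_{2}})\,\sigma=0$ by Stokes on the closed surface $\Sigma$. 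Thus on $\op{T}_{f}\mathcal{N}$ the form is $\int_{\Sigma}\omega(N_{\tau_{1}},N_{\tau_{2}})\,\sigma$; a cutoff argument as in the Appendix, using fibrewise nondegeneracy of $\omega|_{\nu_{f}}$, shows $\int_{\Sigma}\omega(\cdot,\cdot)\,\sigma$ is weakly nondegenerate on $\Gamma(\nu_{f})$, and combined with $\{N_{\tau}:\tau\in\op{T}_{f}\mathcal{N}\}=\Gamma(\nu_{f})$ this gives $\ker\!\bigl((\omega^{D}_{(\Sigma,\sigma)})_{f}|_{\op{T}_{f}\mathcal{N}}\bigr)=\{df(Y):Y\ \text{Hamiltonian}\}=\ker(q|_{\mathcal{N}})_{*,f}$. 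So $\omega^{D}_{(\Sigma,\sigma)}|_{\mathcal{N}}$ is horizontal for $q|_{\mathcal{N}}$, and any form it descends to is automatically weakly nondegenerate.

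Next the descent. The form $\omega^{D}_{(\Sigma,\sigma)}$ is $\Diff(\Sigma,\sigma)$\--invariant, directly from the integral formula and $\phi^{*}\sigma=\sigma$; moreover right translation $R_{\phi}$ by $\phi\in\Diff(\Sigma,\sigma)$ maps $\op{T}_{f}\mathcal{N}$ into $\op{T}_{f\circ\phi}\mathcal{N}$ — it conjugates a Hamiltonian vector field on $(\Sigma,\sigma)$ to one (with Hamiltonian $g\circ\phi$) and carries $\nu_{f}$ to $\nu_{f\circ\phi}$, so it respects the description of $\op{T}_{f}\mathcal{N}$ above. Together with horizontality this shows $\omega^{D}_{(\Sigma,\sigma)}|_{\mathcal{N}}$ is basic for $q|_{\mathcal{N}}\colon\mathcal{N}\to\mathcal{O}$, so (granting that $q$ restricts to a submersion onto $\mathcal{O}$, as provided by the convenient-setting framework) it descends to a $2$\--form $\omega^{D}_{\red}$ on $\mathcal{O}$ with $(q|_{\mathcal{N}})^{*}\omega^{D}_{\red}=\omega^{D}_{(\Sigma,\sigma)}|_{\mathcal{N}}$. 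Since $(q|_{\mathcal{N}})^{*}$ is injective on forms and $\omega^{D}_{(\Sigma,\sigma)}$ is closed (Appendix), $\omega^{D}_{\red}$ is closed; weak nondegeneracy is the previous paragraph.

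Finally, to recognise $(\mathcal{O},\omega^{D}_{\red})$ as a symplectic quotient I would place everything in the Donaldson--Lee moment map picture: $\Diff(\Sigma,\sigma)$ acts on the weakly symplectic space of symplectic embeddings $\Sigma\to M$ with a moment map $\mu$ whose value records $f^{*}\omega$, $\mathcal{N}$ lies in the level set $\mu^{-1}(c)$ corresponding to $\sigma$, and the reduced form $\omega^{\mathrm{MW}}_{\red}$ satisfies $q^{*}\omega^{\mathrm{MW}}_{\red}=\iota^{*}\omega^{D}_{(\Sigma,\sigma)}$; restricting this identity along $\mathcal{N}\hookrightarrow\mu^{-1}(c)$ gives $(q|_{\mathcal{N}})^{*}(\omega^{\mathrm{MW}}_{\red}|_{\mathcal{O}})=\omega^{D}_{(\Sigma,\sigma)}|_{\mathcal{N}}=(q|_{\mathcal{N}})^{*}\omega^{D}_{\red}$, so $\omega^{D}_{\red}=\omega^{\mathrm{MW}}_{\red}|_{\mathcal{O}}$, which is the assertion that $(\mathcal{O},\omega^{D}_{\red})$ is (the restriction to $\mathcal{O}$ of) the symplectic quotient. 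The main obstacle is the kernel computation of the two middle paragraphs: the direct\--sum description of $\op{T}_{f}\mathcal{N}$ and the exactness of $\iota_{Y_{\tau}}\sigma$; the surjectivity $\{N_{\tau}:\tau\in\op{T}_{f}\mathcal{N}\}=\Gamma(\nu_{f})$, which rests on prescribing a function on $M$ with given normal $1$\--jet along $f(\Sigma)$; and the cutoff argument on $\Gamma(\nu_{f})$ — all to be carried out inside the convenient calculus. A further, more conceptual, subtlety is that on the ambient level set the null directions of $\omega^{D}_{(\Sigma,\sigma)}$ are only the Hamiltonian (not all symplectic) vector fields on $(\Sigma,\sigma)$ — the ``flux'' directions Donaldson's topological hypotheses were meant to exclude — so it is precisely the passage to the $\Ham(M,\omega)$\--orbit $\mathcal{N}$, which is transverse to those directions, that makes the quotient by the \emph{full} group $\Diff(\Sigma,\sigma)$ come out weakly symplectic.
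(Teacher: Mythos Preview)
Your computation that $\omega^{D}_{(\Sigma,\sigma)}|_{\mathcal{N}}$ is basic for $q|_{\mathcal{N}}$ and descends to a closed weakly nondegenerate form is essentially the paper's argument, organised slightly differently: the decomposition $\tau=df(Y_{\tau})+N_{\tau}$, the exactness of $\iota_{Y_{\tau}}\sigma$ and the Stokes step, the surjectivity of $\tau\mapsto N_{\tau}$ onto $\Gamma(\nu_{f})$ via prescribing a normal $1$-jet, and the cutoff argument for nondegeneracy on $\Gamma(\nu_{f})$ are exactly the content of the paper's Lemmas~\ref{path}, \ref{vanish}, \ref{identifyx} and the proof of Proposition~\ref{sympl3}. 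So that half of the theorem is fine.

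The gap is in the final paragraph. The paper's notion of ``symplectic quotient'' is \emph{not} Marsden--Weinstein reduction; it is Definition~\ref{leedef} (after Ortega--Ratiu): one must exhibit $\mathcal{N}$ as a \emph{maximal integral manifold} of the distribution $\mathcal{D}_{i}=\{v\in\op{T}_{i}\mathcal{S}_{\op{e}}(\Sigma,\sigma):\omega^{D}(v,\xi_{\mathcal{S}_{\op{e}}}(i))=0\ \forall\,\xi\in\X(\Sigma,\sigma)\}$ and then check that the restricted form descends. Your moment-map argument does not address this. Worse, the map you propose, ``$\mu(f)=f^{*}\omega$'', is identically $\sigma$ on $\mathcal{S}_{\op{e}}(\Sigma,\sigma)$ by the very definition of that space, so $\mu^{-1}(c)=\mathcal{S}_{\op{e}}(\Sigma,\sigma)$ and no reduction is taking place; and even if you enlarge the ambient space so that $\mu$ becomes nonconstant, you would be exhibiting $\mathcal{O}$ as a quotient of that larger space, not of $\mathcal{S}_{\op{e}}(\Sigma,\sigma)$ as the theorem asserts.

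What you actually need to prove is that $\mathcal{D}_{i}=\op{T}_{i}(\Ham(M,\omega)\cdot i)$ for every $i$, and that the $\Ham(M,\omega)$-orbit is maximal among integral manifolds. The equality follows from your own computation once you observe that $\mathcal{D}_{i}$ is defined by vanishing of $\omega^{D}(v,\cdot)$ against \emph{all} symplectic vector fields on $\Sigma$ (not just Hamiltonian ones), and that this forces the tangential component $Y_{v}$ to be zero, hence $v\in\Gamma(\nu_{i})=\Gamma_{\exact}(i^{*}\op{T}\!M)=\op{T}_{i}(\Ham(M,\omega)\cdot i)$; this is Lemma~\ref{vanish} ($3\Leftrightarrow 4\Leftrightarrow 1$). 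Maximality then requires a separate path argument (the paper uses Corollary~\ref{hampath} and the uniqueness of the splitting $\xi_{v}+\tau_{v}$), which your proposal does not supply. Your closing remark about ``flux'' directions is in the right spirit---it is exactly the equivalence $1\Leftrightarrow 3$ in Lemma~\ref{vanish} that singles out the Hamiltonian orbit---but it needs to be turned into a proof that $\mathcal{N}$ is an integral leaf of $\mathcal{D}$, not a statement about level sets of a putative moment map.
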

The notion of a symplectic quotient here 
does not depend on having a moment map, see Definition \ref{leedef}.
It is motivated by the optimal reduction method of Ortega and Ratiu \cite{ratiu}. 

We also compare ${\omega^{D}}_{(\Sigma, \, \sigma)}$ to 
the $2$-form we defined in \cite{ckp} on the space of immersed symplectic $\Sigma$-surfaces in $M$.
Denote by
$$\op{ev} \colon \op{C}^{\infty}(\Sigma,\, M)  \times \Sigma \to M$$ 
the \emph{evaluation map} 
$$
\op{ev}(f,\,x):= f(x).
$$

\begin{definition}
Define a $2$-form on $\op{C}^{\infty}(\Sigma,\, M)$ as the push-forward of the $4$-form ${\op{ev}}^{\ast}(\omega \wedge \omega)$ along the coordinate-projection  
$\pi_{\op{C}^{\infty}(\Sigma,\, M)} \colon \op{C}^{\infty}(\Sigma,\, M) \times \Sigma \to \op{C}^{\infty}(\Sigma,\, M)$ by
  \begin{eqnarray} \label{omegas1:def}
       (\omega_{\op{C}^{\infty}(\Sigma,\, M)})_f( \tau_1, \,\tau_2):=\int_{\{f\} \times \Sigma} \iota_{( \ell_1 \wedge
    \ell_2 )} {\op{ev}}^{\ast} ( \omega \wedge \omega ).
  \end{eqnarray}
Here $\ell_{i} \in \op{T}(\op{C}^{\infty}(\Sigma,\, M) \times \Sigma)$ is a
\emph{lifting} of $\tau_{i} \in \op{T}_f(\op{C}^{\infty}(\Sigma,\, M))$,  i.e.,
$$
\op{d} ( \pi_{\op{C}^{\infty}(\Sigma,\, M)} ) {\ell_{i}}_{( f,\, x )}  = \tau_{i} \,\,\, \textup{at each point}\,\,\,
(f, \,x) \in \pi_{\op{C}^{\infty}(\Sigma,\, M)}^{- 1} (f).
$$
\end{definition}
Denote 
\begin{eqnarray} 
\mathcal{S}_{\op{i}}(\Sigma):=\{ f \colon \Sigma \to M \ \mid \ f \textup{ is an
immersion, }f^{*}\omega \textup{ is a symplectic form on }\Sigma \}. \nonumber
\end{eqnarray}
The space $\mathcal{S}_{\op{i}}(\Sigma)$ is  an open subset of 
$\op{C}^{\infty}(\Sigma,\, M)$ in the $C^{\infty}$-topology. 
Let $$\omega_{\mathcal{S}_{\op{i}}(\Sigma)}$$ be the $2$\--form on  $\mathcal{S}_{\op{i}}(\Sigma)$ given
by the restriction of $\omega_{\op{C}^{\infty}(\Sigma,\, M)}$.
We showed in \cite{ckp} that 
the $2$\--form  $\omega_{\op{C}^{\infty}(\Sigma,\, M)}$ on $\op{C}^{\infty}(\Sigma,\, M)$ is well defined and closed, and $\omega_{\op{C}^{\infty}(\Sigma,\, M)} (\tau, \,\cdot )$
vanishes at $f$ if $\tau$ is  everywhere tangent to $f(\Sigma)$. Furthermore,
$$
 \omega_{\mathcal{S}_{\op{i}}(\Sigma)} (\tau, \,\cdot ) = 0 \,\,\textup{at}\,\, f \,\, \iff
\tau \textup{ is tangent to }f(\Sigma)  \,\,\textup{at every}\,\, x \in \Sigma. 
  $$
We 
say that a vector
field  $\tau \colon \Sigma \rightarrow f^{\ast} (\op{T}\!M)$ {\em{is tangent to $f(\Sigma)$ at $x$}} if $\tau(x) \in
\op{d}\!f_{x}(\op{T}_{x}\Sigma)$. See also \cite{secondlee}.
 
 Consider the space of $\omega$-compatible almost complex structures $\J=\J(M, \, \omega)$ on $(M, \, \omega)$.  Fix $\Sigma=(\Sigma,\, j)$, where $j$ is a complex structure on $\Sigma$. 
The moduli space
$\M_{\scriptop{i}}(A,\,\Sigma,\,J)$  is the space of simple immersed $(j,J)$-holomorphic $\Sigma$-curves in a homology class $A \in \op{H}_2(M,\,\Z)$. 
The moduli space
$\M_{\scriptop{e}}(A,\,\Sigma,\,J)$  is the space of embedded $(j,J)$-holomorphic $\Sigma$-curves in a homology class $A \in \op{H}_2(M,\,\Z)$. 
We look at almost complex structures that are regular for the projection map $$p_{A} \colon \M_{\scriptop{i}}(A,\,\Sigma, \,\mathcal{J}) \to \J;$$ for such a  $J$, the spaces $\M_{\scriptop{i}}(A,\,\Sigma,\,J)$  and $\M_{\scriptop{e}}(A,\,\Sigma,\,J)$ are finite-dimensional manifolds. 
(The set of $p_{A}$-regular $\omega$\--compatible almost complex structures is of the second category in $\J$.) 
See \cite[Thm 3.1.5]{MS2}. There is merit to the form  $\omega_{\mathcal{S}_{\op{i}}(\Sigma)}$ in the fact that it is degenerate along directions tangent to $f(\Sigma)$, hence
descends to a well defined form on the quotient space  ${\widetilde{\M}}_{\scriptop{i}}(A,\,\Sigma,\,J)$ of  $\M_{\scriptop{i}}(A,\,\Sigma,\,J)$ by the proper action of the group  $\Aut(\Sigma, \, j)$ of bi-holomorphisms of $\Sigma$: this enables us to apply Gromov's compactness theorem 
and get a well defined invariant of $(M, \, \omega)$.
If $J_* \in \mathcal{J}_{\scriptop{reg}}(A)$ is integrable, then the restriction of the form $\omega_{\mathcal{S}_{\op{i}}(\Sigma)}$ to $\M_{\scriptop{i}}(A,\,\Sigma,\,J_*)$ is non-degenerate, up to reparametrizations; see \cite[Prop. 4.4]{ckp}. We obtained results on the existence of $J$-holomorphic curves in a homology class $A$ for some subset of $\J$, and in some cases for a generic $J$, see \cite[Cor. 1.3]{ckp}. 

Here we show that the $2$-forms $2{\omega^{D}}_{(\Sigma, \, \sigma)}$ and  $\omega_{\mathcal{S}_{\op{i}}(\Sigma)}$ coincide in exact direction, hence on the quotient of a $\Ham(M, \, \, \omega)$-orbit with respect to the $\Diff(\Sigma, \, \sigma)$-action. 
The difference between ${\omega^D}$ and $\omega_{\mathcal{S}_{\op{i}}(\Sigma)}$  is that $\iota_{v}{\omega^D}$ is degenerate along vectors in $\op{T}\!\mathcal{S}_{\op{e}}(\Sigma, \, \sigma)$ everywhere tangent to $\Sigma$ iff $v=i^{*}{V_{H}}$ for a Hamiltonian vector field $V_{H}$ on $M$ whereas $\iota_{v} { {\omega_{\mathcal{S}_{\op{i}}(\Sigma)}}}$ is degenerate along vectors everywhere tangent to $\Sigma$ for every $v$ in  $\op{T}\!\mathcal{S}_{\op{i}}(\Sigma)$, see Remark \ref{remold}. Due to this difference, Theorem \ref{thm1} does not hold for  $\omega_{\mathcal{S}_{\op{i}}(\Sigma)}$. On the other hand, we do not get a well defined reduction of $\omega^D_{(\Sigma, \, \sigma)}$ on the quotient of  $\M_{\scriptop{i}}(A,\,\Sigma,\,J)$ by the action of $\Aut(\Sigma, \, j)$, as we did for $\omega_{\mathcal{S}_{\op{i}}(\Sigma)}$. However we do get a partial result.  
For $J \in \J(M,\, \omega)$, denote by
  $$\Ham^{J}(M, \, \omega)$$ the subgroup of $\Ham(M, \, \omega)$ of $J$-holomorphic Hamiltonian symplectomorphisms.
    Let $\mathcal{N}$ be an orbit of $\Ham^{J}(M, \, \omega)$ through an embedded $(j,J)$-holomorphic curve $f \colon \Sigma \to M$ for which $f^{*}\omega=\sigma$. The orbit $\mathcal{N}$ is a subset of   
  $\M_{\scriptop{e}}(A,\,\Sigma,\,J)$, where $$A \in \op{H}_2(M,\,\Z)$$ is the class for which the area $f^{*}\omega(\Sigma)=\sigma(\Sigma)$ for (every) $f \in A$.

\begin{corollary} \label{mod}
Assume that the symplectic form $\sigma$  on $\Sigma$ is compatible with the complex structure $j$  on $\Sigma$.
Let $J \in \J(M,\, \omega)$, assume that $J$ is integrable and regular for $A$.
Let $\mathcal{N}$ be an orbit of $\Ham^{J}(M, \, \omega)$ through a $(j,J)$-holomorphic embedding $f \colon \Sigma \to M$ for which $f^{*}\omega=\sigma$. 
   
The forms  ${\omega^{D}_{(\Sigma, \, \sigma)}}$ and $\omega_{{\mathcal{S}_{\op{i}}(\Sigma)}}$ descend to well defined symplectic forms  $\omega^D_{\red}$ and  $\omega^{\red}_{\mathcal{S}_{\op{i}}(\Sigma)}$ on the quotient of $\mathcal{N}$  with respect to $\Aut(\Sigma,\,j)$. The form $\omega^{\red}_{\mathcal{S}_{\op{i}}(\Sigma)}$ coincides with the form $2\omega^D_{\red}$ on the quotient.
\end{corollary}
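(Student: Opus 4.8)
The plan is to derive Corollary \ref{mod} from the pointwise comparison between $\omega_{\mathcal{S}_{\op{i}}(\Sigma)}$ and ${\omega^{D}}_{(\Sigma, \, \sigma)}$ over $\mathcal{S}_{\op{e}}(\Sigma, \, \sigma)$, together with the reduction of $\omega_{\mathcal{S}_{\op{i}}(\Sigma)}$ on $J$-holomorphic moduli spaces from \cite{ckp}. First I would locate the orbit and identify its quotient. Each $\psi \in \Ham^{J}(M,\,\omega)$ is a $J$-holomorphic symplectomorphism and $f$ is a $(j,J)$-holomorphic embedding with $f^{*}\omega = \sigma$, so every $\psi \circ f$ is an embedded $(j,J)$-holomorphic curve in class $A$ with $(\psi \circ f)^{*}\omega = \sigma$; hence $\mathcal{N} \subseteq \M_{\scriptop{e}}(A,\,\Sigma,\,J)$, which by regularity of $J$ is a finite-dimensional manifold carrying a proper $\Aut(\Sigma,\,j)$-action, with quotient $\widetilde{\M}_{\scriptop{e}}(A,\,\Sigma,\,J)$. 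Since every element of $\mathcal{N}$ pulls $\omega$ back to $\sigma$, two parametrizations lying in $\mathcal{N}$ can be $\Aut(\Sigma,\,j)$-equivalent only via an element of $\Aut(\Sigma,\,j)\cap\Diff(\Sigma,\,\sigma)$; thus ``the quotient of $\mathcal{N}$ with respect to $\Aut(\Sigma,\,j)$'' is precisely the image of $\mathcal{N}$ in $\widetilde{\M}_{\scriptop{e}}(A,\,\Sigma,\,J)$, a single $\Ham^{J}(M,\,\omega)$-orbit there.

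Next I would establish the comparison along $\mathcal{N}$. On $\mathcal{S}_{\op{e}}(\Sigma,\,\sigma)$ one has $f^{*}\omega = \sigma$ by definition, and expanding ${\op{ev}}^{*}(\omega\wedge\omega)$ in the flat lifting (legitimate because $\omega_{\op{C}^{\infty}(\Sigma,\, M)}$ is lifting-independent, \cite{ckp}) gives, for $\tau_1,\tau_2 \in \op{T}_{f}\mathcal{S}_{\op{e}}(\Sigma,\,\sigma)$,
\[ (\omega_{\mathcal{S}_{\op{i}}(\Sigma)})_{f}(\tau_1,\tau_2)=2\,({\omega^{D}}_{(\Sigma, \, \sigma)})_{f}(\tau_1,\tau_2)-2\int_{\Sigma}\beta_1\wedge\beta_2,\qquad\beta_k:=f^{*}(\iota_{\tau_k}\omega). \]
Here $\op{d}\beta_k = 0$ precisely because $\tau_k$ is tangent to $\mathcal{S}_{\op{e}}(\Sigma,\,\sigma)$, so $\int_{\Sigma}\beta_1\wedge\beta_2$ depends only on the de Rham classes $[\beta_k]\in H^{1}(\Sigma)$; this is the coincidence in exact directions established above (cf.\ Remark \ref{remold}). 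Along $\mathcal{N}$ every tangent vector is $f^{*}V_{H}$ for a ($J$-holomorphic) Hamiltonian vector field $V_{H}$ on $M$, so $\beta = f^{*}(\iota_{V_{H}}\omega) = \op{d}(H\circ f)$ is exact and $[\beta]=0$; hence $\int_{\Sigma}\beta_1\wedge\beta_2 = 0$ for every pair of tangent vectors to $\mathcal{N}$, i.e.\ $\omega_{\mathcal{S}_{\op{i}}(\Sigma)}\big|_{\mathcal{N}} = 2\,{\omega^{D}}_{(\Sigma,\,\sigma)}\big|_{\mathcal{N}}$.

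The descent is then formal. By \cite{ckp}, $\omega_{\mathcal{S}_{\op{i}}(\Sigma)}$ is closed, $\Aut(\Sigma,\,j)$-invariant, and $\iota_{\tau}\omega_{\mathcal{S}_{\op{i}}(\Sigma)}$ vanishes whenever $\tau$ is tangent to $f(\Sigma)$, so it descends to $\widetilde{\M}_{\scriptop{e}}(A,\,\Sigma,\,J)$; and since $J$ is integrable and regular, \cite[Prop. 4.4]{ckp} makes the reduced form $\omega^{\red}_{\mathcal{S}_{\op{i}}(\Sigma)}$ non-degenerate there. Restricting to the submanifold $\mathcal{N}$ and using the identity of the previous paragraph, ${\omega^{D}}_{(\Sigma,\,\sigma)}\big|_{\mathcal{N}} = \tfrac12\,\omega_{\mathcal{S}_{\op{i}}(\Sigma)}\big|_{\mathcal{N}}$ likewise descends to the quotient of $\mathcal{N}$, yielding $\omega^{D}_{\red} = \tfrac12\,\omega^{\red}_{\mathcal{S}_{\op{i}}(\Sigma)}$ there; closedness of $\omega^{D}_{\red}$ comes from closedness of ${\omega^{D}}_{(\Sigma,\,\sigma)}$ (Appendix). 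It is here that the hypothesis ``$\sigma$ compatible with $j$'' enters: it guarantees that the reparametrization directions contained in $\op{T}\mathcal{N}$ — spanned by the $\op{d}f(\xi)$ for $\xi$ a holomorphic vector field on $(\Sigma,\,j)$ which, being the trace on $f(\Sigma)$ of a $J$-holomorphic Killing field of $(M,\,\omega(\cdot,J\cdot))$, is Killing for the $(j,\sigma)$-metric and hence preserves $\sigma$ — are genuine null directions of ${\omega^{D}}_{(\Sigma,\,\sigma)}\big|_{\mathcal{N}}$, so the reduction by $\Aut(\Sigma,\,j)$ is well defined and matches that of $\omega_{\mathcal{S}_{\op{i}}(\Sigma)}$.

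The step I expect to be the main obstacle is non-degeneracy of the reduced forms \emph{on the image of $\mathcal{N}$} rather than on all of $\widetilde{\M}_{\scriptop{e}}(A,\,\Sigma,\,J)$: in general this image is a proper submanifold, and restricting a symplectic form to a submanifold need not keep it non-degenerate, so \cite[Prop. 4.4]{ckp} does not by itself suffice. I would settle this by showing the image of $\mathcal{N}$ is a symplectic submanifold — equivalently, that the radical of $\omega_{\mathcal{S}_{\op{i}}(\Sigma)}\big|_{\op{T}\mathcal{N}} = 2\,{\omega^{D}}_{(\Sigma,\,\sigma)}\big|_{\op{T}\mathcal{N}}$ consists of reparametrization directions only — which via the identity above reduces to a non-degeneracy statement for the pairing $(V_H,V_{H'})\mapsto\int_{\Sigma}\big(\omega(V_H,V_{H'})\circ f\big)\,\sigma$ on the Lie algebra of $\Ham^{J}(M,\,\omega)$ modulo its isotropy at $f$; this is the moment-map content already underlying Theorem \ref{thm1}, here adapted to the $J$-holomorphic setting and to the group $\Aut(\Sigma,\,j)$. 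Carrying out this verification — or, equivalently, exhibiting the quotient of $\mathcal{N}$ as a symplectic submanifold of the reduced space of Theorem \ref{thm1} — is the main work; the remaining ingredients are the flat-lifting computation of the displayed identity and routine bookkeeping in the ``Convenient Setup.''
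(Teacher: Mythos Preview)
Your outline is largely on the right track --- you correctly locate $\mathcal{N}$ in $\M_{\scriptop{e}}(A,\Sigma,J)\cap\mathcal{S}_{\op{e}}(\Sigma,\sigma)$, you recover the comparison $\omega_{\mathcal{S}_{\op{i}}(\Sigma)}=2\,\omega^{D}_{(\Sigma,\sigma)}$ in exact directions (this is exactly Lemma~\ref{exact}), and you correctly flag non-degeneracy on the orbit quotient as the crux. But two ingredients present in the paper are missing or replaced by longer detours in your plan.

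First, you never argue that $\mathcal{N}$ is a manifold. The paper does this with a properness argument: the $\Ham^{J}(M,\omega)$-action on $\M_{\scriptop{e}}(A,\Sigma,J)$ preserves both $\omega^{D}_{(\Sigma,\sigma)}$ and $\tilde{J}$, hence the metric $\tilde g(\cdot,\cdot)=\omega^{D}_{(\Sigma,\sigma)}(\cdot,\tilde{J}\cdot)$; since $\M_{\scriptop{e}}(A,\Sigma,J)$ is finite-dimensional, the isometry group acts properly, and $\Ham^{J}(M,\omega)$ is a closed subgroup, so its orbits are embedded submanifolds. Without this you cannot speak of $\op{T}\mathcal{N}$ or of a smooth quotient.

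Second, your route to non-degeneracy (restrict from \cite[Prop.~4.4]{ckp}, then show the image of $\mathcal{N}$ is a symplectic submanifold, or alternatively invoke Theorem~\ref{thm1}) is more circuitous than necessary. The paper settles it in one line via Lemma~\ref{integcomp}: for $v$ not everywhere tangent, split $v=\xi_v+\tau_v$ into $\omega$-normal and tangent parts; since $f$ is $(j,J)$-holomorphic, $J$ preserves both summands (Lemma~\ref{jclaim}), so $[\tilde{J}v]=[\tilde{J}\xi_v]$ and
\[
\omega^{D}_{\red}([v],[\tilde{J}v])=\int_{\Sigma}\omega(\xi_v,J\xi_v)\,\sigma>0
\]
by tameness. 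This replaces your proposed ``moment-map content'' verification entirely.

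Two smaller points. The identification of the $\Aut(\Sigma,j)$-quotient with the $\Diff(\Sigma,\sigma)$-quotient on $\mathcal{N}$ is two short computations (Lemmata~\ref{clmod1} and~\ref{clmod2}): any reparametrization relating two elements of $\mathcal{N}$ lies in both groups. You prove one inclusion but not the other. And your account of where ``$\sigma$ compatible with $j$'' enters is off: for a $(j,J)$-holomorphic $f$ with $J$ $\omega$-compatible, $f^{*}\omega$ is automatically $j$-compatible, so the hypothesis is a non-vacuousness condition on the existence of such an $f$, not a device to make reparametrization directions $\omega^{D}$-null.
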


 \section{The space $\mathcal{S}_{\op{e}}(\Sigma, \, \sigma)$} \label{sec2}

 \begin{proposition} \label{thclonon2}
The $2$\--form ${\omega^{D}}_{(\Sigma, \, \sigma)}$ on $\op{C}^{\infty}(\Sigma,\, M)$ is closed and its restriction to the
space of immersions $\Sigma \to M$ is weakly non-degenerate. 
\end{proposition}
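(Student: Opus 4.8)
The plan is to deduce closedness from the fact that $\omega^{D}_{(\Sigma,\sigma)}$ is a fiber integral of a closed form, exactly as $\omega_{\op{C}^{\infty}(\Sigma,\, M)}$ was handled above, and to prove weak non-degeneracy by a pointwise bump-function argument using an $\omega$-compatible metric on $M$. For closedness, write $\pi:=\pi_{\op{C}^{\infty}(\Sigma,\, M)}$, let $\pi_{\Sigma}\colon\op{C}^{\infty}(\Sigma,\, M)\times\Sigma\to\Sigma$ be the other coordinate projection, and identify, up to an overall sign fixed by the orientation of $\Sigma$,
\[ \omega^{D}_{(\Sigma,\sigma)} \;=\; \pi_{*}\bigl({\op{ev}}^{*}\omega\wedge\pi_{\Sigma}^{*}\sigma\bigr), \]
the push-forward along $\pi$ (i.e.\ the fiber integral over the closed fiber $\Sigma$) of a $4$-form on $\op{C}^{\infty}(\Sigma,\, M)\times\Sigma$. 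This is checked by the same bookkeeping as for $\omega_{\op{C}^{\infty}(\Sigma,\, M)}$ in \cite{ckp}: contracting ${\op{ev}}^{*}\omega\wedge\pi_{\Sigma}^{*}\sigma$ with two liftings of $\tau_{1},\tau_{2}\in\op{T}_{f}(\op{C}^{\infty}(\Sigma,\, M))$ that have vanishing $\Sigma$-component, such liftings are annihilated by $\pi_{\Sigma}^{*}\sigma$, so the only surviving term is $\omega_{f(x)}(\tau_{1}(x),\tau_{2}(x))\cdot\pi_{\Sigma}^{*}\sigma$, whose integral over $\{f\}\times\Sigma$ is the integral defining $\omega^{D}_{(\Sigma,\sigma)}$; independence of the choice of liftings, hence well-definedness in the convenient sense, follows as in \cite{ckp}. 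Then I would invoke the two facts underlying the closedness of $\omega_{\op{C}^{\infty}(\Sigma,\, M)}$ in \cite{ckp} (see also \cite{lee}): push-forward along $\pi$ over the boundaryless manifold $\Sigma$ commutes with $d$, with no boundary term since $\partial\Sigma=\emptyset$; and $d({\op{ev}}^{*}\omega\wedge\pi_{\Sigma}^{*}\sigma)={\op{ev}}^{*}(d\omega)\wedge\pi_{\Sigma}^{*}\sigma-{\op{ev}}^{*}\omega\wedge\pi_{\Sigma}^{*}(d\sigma)=0$ because $\omega$ and $\sigma$ are closed. Hence $d\omega^{D}_{(\Sigma,\sigma)}=0$, since it equals, up to sign, $\pi_{*}\,d({\op{ev}}^{*}\omega\wedge\pi_{\Sigma}^{*}\sigma)$. (Equivalently one may test on plots: a smooth $G\colon P\times\Sigma\to M$ yields a smooth $g\colon P\to\op{C}^{\infty}(\Sigma,\, M)$ with $g^{*}\omega^{D}_{(\Sigma,\sigma)}$ equal, up to sign, to $(\mathrm{pr}_{P})_{*}\bigl(G^{*}\omega\wedge\mathrm{pr}_{\Sigma}^{*}\sigma\bigr)$, which is closed on the finite-dimensional manifold $P$ by the same computation; since this holds for all plots, $\omega^{D}_{(\Sigma,\sigma)}$ is closed.)

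For weak non-degeneracy, fix $J\in\J(M,\,\omega)$ and let $g_{J}=\omega(\cdot,J\cdot)$ be the associated Riemannian metric on $M$. Let $f\colon\Sigma\to M$ be an immersion (in fact the immersion hypothesis is not needed here: the argument works for any $f\in\op{C}^{\infty}(\Sigma,\, M)$), and let $\tau_{1}\in\Omega^{0}(\Sigma,\, f^{*}(\op{T}\!M))$ be nonzero. Choose $x_{0}\in\Sigma$ with $\tau_{1}(x_{0})\neq0$ and a function $\rho\ge 0$ on $\Sigma$ with small support and $\rho(x_{0})>0$, and set $\tau_{2}:=\rho\cdot(J\circ f)\tau_{1}\in\Omega^{0}(\Sigma,\, f^{*}(\op{T}\!M))$. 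Then
\[ (\omega^{D}_{(\Sigma,\sigma)})_{f}(\tau_{1},\tau_{2})=\int_{\Sigma}\rho(x)\,\omega_{f(x)}\bigl(\tau_{1}(x),J_{f(x)}\tau_{1}(x)\bigr)\,\sigma=\int_{\Sigma}\rho(x)\,|\tau_{1}(x)|^{2}_{g_{J}}\,\sigma>0, \]
since the integrand is non-negative and strictly positive on a neighborhood of $x_{0}$. Hence $(\omega^{D}_{(\Sigma,\sigma)})_{f}(\tau_{1},\cdot)\neq0$; that is, the contraction map $\tau\mapsto(\omega^{D}_{(\Sigma,\sigma)})_{f}(\tau,\cdot)$ into the topological dual of $\op{T}_{f}(\op{C}^{\infty}(\Sigma,\, M))$ is injective, which is precisely weak non-degeneracy at $f$.

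The only genuinely delicate point is making the closedness argument rigorous in the ``Convenient Setup'': smoothness of ${\op{ev}}$, well-definedness of push-forward along $\pi$ and its commutation with $d$ over the infinite-dimensional base $\op{C}^{\infty}(\Sigma,\, M)$, and the legitimacy of the contraction identity for $\omega^{D}_{(\Sigma,\sigma)}$. I expect this to be where the real work lies; however it runs exactly parallel to the treatment of the push-forward form $\omega_{\op{C}^{\infty}(\Sigma,\, M)}$ in \cite{ckp} and to Lee's rigorous formulation of Donaldson's construction in \cite{lee}, so one only substitutes the $4$-form ${\op{ev}}^{*}\omega\wedge\pi_{\Sigma}^{*}\sigma$ for ${\op{ev}}^{*}(\omega\wedge\omega)$. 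The non-degeneracy half is elementary and self-contained.
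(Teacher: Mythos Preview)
Your proof is correct and follows essentially the same route as the paper: the closedness argument via the identification $\omega^{D}_{(\Sigma,\sigma)}=\pi_{*}\bigl(\op{ev}^{*}\omega\wedge\pi_{\Sigma}^{*}\sigma\bigr)$ is exactly the content of the paper's Claim~\ref{clpi} followed by the homologous-surfaces argument (with the same appeal to \cite{lee} for the convenient setting), and your non-degeneracy argument is the paper's $\tilde{J}$-tamedness computation (Lemma~\ref{integcomp2}) written out directly. Two incidental remarks: your bump function $\rho$ is unnecessary since $\omega(\tau_{1},J\tau_{1})\,\sigma$ is already globally non-negative, and your observation that the immersion hypothesis plays no role in the non-degeneracy half is correct---the paper's proof does not actually use it either.
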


The space  $\op{C}^{\infty}(\Sigma,\, M)$ is a smooth manifold in the Convenient Setup, 
modeled on spaces $\Gamma(f^{*}\op{T}\!M)$ of sections of the pullback bundle along 
$f \in \op{C}^{\infty}(\Sigma,\, M)$ \cite[42.1]{conv}. The space $\Gamma(f^{*}\op{T}\!M)$ has a natural convenient structure \cite[30.1]{conv}.

 A $2$\--form $\Omega$ on a manifold $X$ (possibly infinite-dimensional) is called \emph{weakly non-degenerate} if for every $x \in X$ and $0 \neq v \in \op{T}_{x}X$ there exists 
a $w \in \op{T}_{x}X$ such that $\Omega_{x}(v, w) \neq 0$. This is equivalent to its associated vector bundle homomorphism $\Omega^{\flat}\colon \op{T}\!X \to \op{T}^{*}\!X$ being injective. 
 If $\Omega^{\flat} \colon \op{T}\!X \to \op{T}^{*}\!X$ is an isomorphism, i.e., invertible with a smooth inverse, 
then $\Omega$ is called \emph{strongly non-degenerate}. In this paper, by non-degenerate we mean weakly non-degenerate.
If $\Omega$ is closed and weakly non-degenerate, it is called \emph{weakly symplectic}.

For the proof of Proposition \ref{thclonon2} and required facts on compatible almost complex structures, see the Appendix.
 
{\bf Notation: }

For every embedding $i \colon \Sigma \to M$, 
for $v \in \Gamma(i^{*}\op{T}\!M)$, let $\alpha_v \in \Omega^1(\Sigma)$ denote the form
$$
(\alpha_v)_{x}(\xi) := \omega_{x}(v(x), \op{d}\!i_{x}\xi) \, \text{for } \xi \in \op{T}_{x}\Sigma. 
$$
Also, set 
$$
\Gamma_{\closed}(i^{*}\op{T}\!M):= \{v \in \Gamma(i^{*}\op{T}\!M) \, | \, \alpha_v \text{ is a closed }1\text{-form on }\Sigma\},
$$ 
and
 $$
\Gamma_{\exact}(i^{*}\op{T}\!M):= \{v \in \Gamma(i^{*}\op{T}\!M) \, | \, \alpha_v \text{ is an exact }1\text{-form on }\Sigma\}.
$$

For a vector field 
$$v \in \op{T}_{i}C^{\infty}(\Sigma, \, M)$$
denote by 
\begin{equation}\label{decomp}
\xi_v+\tau_v
\end{equation}
the decomposition of $v$ to a vector field $\xi_v$ everywhere $\omega$-orthogonal to $\Sigma$ and a vector field $\tau_v$ everywhere tangent to $\Sigma$.
Such a decomposition exists and is unique, e.g., by Remark \ref{extend} and Corollary \ref{corj}.

We 
say that a vector
field  $\tau \colon \Sigma \rightarrow \op{T}\!M$ is {\em{tangent to $\Sigma$ at $x$}} if $\tau(x) \in
\op{T}_{i(x)}i(\Sigma)$. 
We say that a vector field $\xi \colon \Sigma \rightarrow \op{T}\!M$ is {\em{$\omega$-orthogonal to $\Sigma$ at $x$}} if $\xi(x) \in  (\op{T}_{i(x)}i(\Sigma))^{\omega}$.

  Denote by
 $$\mathcal{S}_{\op{e}}(\Sigma, \, \sigma)$$ the set of embeddings $(\Sigma, \, \sigma) \to (M, \, \omega)$ such that  $i^{*}\omega=\sigma$.

\begin{proposition} \label{model}
The set $\mathcal{S}_{\op{e}}(\Sigma, \, \sigma)$ is a 
smooth manifold
modeled on  $\Gamma_{\exact}(i^{*}\op{T}\!M) \oplus \X(\Sigma, \, \sigma)$, where  $$\X(\Sigma, \, \sigma)=\{\xi \text{ a vector field on }\Sigma\, | \, \mathcal{L}_{\xi}\sigma=0\}.$$ 
\end{proposition}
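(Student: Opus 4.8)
The plan is to exhibit $\mathcal{S}_{\op{e}}(\Sigma,\,\sigma)$ as a smooth submanifold of $\op{C}^{\infty}(\Sigma,\,M)$ by writing it, near a given embedding $i$, as the zero set of a smooth map whose derivative is a split surjection onto a convenient space, and then identifying the kernel with the claimed model. First I would fix $i \in \mathcal{S}_{\op{e}}(\Sigma,\,\sigma)$ and recall that in the Convenient Setup a chart of $\op{C}^{\infty}(\Sigma,\,M)$ centered at $i$ is given by $v \mapsto \exp_{i} \circ v$ for $v$ in a $c^{\infty}$-open neighborhood of $0$ in $\Gamma(i^{*}\op{T}\!M)$ (using an auxiliary metric), so the problem reduces to understanding, for $f = \exp_i \circ v$, the condition $f^{*}\omega = \sigma$ as an equation on $v$. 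Since $i$ is an embedding, for $v$ small $f$ is again an embedding, so $\mathcal{S}_{\op{e}}$ and the subset of embeddings with $f^{*}\omega=\sigma$ agree near $i$. The defining equation is $\Phi(v):=f^{*}\omega - \sigma = 0$, a smooth map from a neighborhood of $0$ in $\Gamma(i^{*}\op{T}\!M)$ to $\Omega^2(\Sigma)$; because $\Sigma$ is a surface and $f^{*}\omega$ is cohomologous to $\sigma$ only when the areas agree, one should further note that $\Phi$ actually lands in the affine subspace $\{\beta \in \Omega^2(\Sigma) : \int_\Sigma \beta = 0\}$ after subtracting $\sigma$ — more precisely $f^{*}\omega-\sigma$ is exact when $[f^{*}\omega]=[\sigma]$, so the natural target is $\op{d}\Omega^1(\Sigma)$.

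Next I would compute the linearization $\op{d}\Phi_0 \colon \Gamma(i^{*}\op{T}\!M) \to \Omega^2(\Sigma)$. By the standard formula for the derivative of a pullback, $\op{d}\Phi_0(v) = \op{d}(\iota_v\omega|_{\text{along }i}) = \op{d}\alpha_v$ in the notation fixed above, since $f^{*}\omega$ depends on $v$ to first order through $i^{*}(\mathcal{L}_{\tilde v}\omega)=i^{*}(\op{d}\iota_{\tilde v}\omega)=\op{d}\alpha_v$ for any extension $\tilde v$. Thus $\op{d}\Phi_0(v)=\op{d}\alpha_v$, and $\ker \op{d}\Phi_0 = \Gamma_{\closed}(i^{*}\op{T}\!M)$. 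The image is $\op{d}(\{\alpha_v : v\in\Gamma(i^{*}\op{T}\!M)\})$; since $i$ is an embedding and $\omega$ is non-degenerate, $v\mapsto\alpha_v$ composed with restriction to the normal directions is surjective onto all of $\Omega^1(\Sigma)$ (given any $1$-form $\beta$ on $\Sigma$, one can solve $\omega_x(v(x),\cdot)=\beta_x$ on $\op{T}_{i(x)}i(\Sigma)$ for $v(x)$ in the symplectic-orthogonal complement, smoothly in $x$, using Remark \ref{extend}/Corollary \ref{corj}), so the image of $\op{d}\Phi_0$ is exactly $\op{d}\Omega^1(\Sigma)$, i.e.\ the exact $2$-forms — equivalently, by Hodge theory on the closed surface $\Sigma$, a complemented subspace of $\Omega^2(\Sigma)$ of codimension $1$ (the complement being the harmonic representatives, spanned by $\sigma$ itself up to scale). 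So $\Phi$, viewed into the affine translate of $\op{d}\Omega^1(\Sigma)$, is a submersion at $0$ with split kernel $\Gamma_{\closed}(i^{*}\op{T}\!M)$; the implicit function theorem in the Convenient Setup (\cite{conv}) then gives that $\mathcal{S}_{\op{e}}(\Sigma,\,\sigma)$ is a smooth manifold with $\op{T}_i\mathcal{S}_{\op{e}}(\Sigma,\,\sigma)=\Gamma_{\closed}(i^{*}\op{T}\!M)$.

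Finally I would identify $\Gamma_{\closed}(i^{*}\op{T}\!M)$ with $\Gamma_{\exact}(i^{*}\op{T}\!M) \oplus \X(\Sigma,\,\sigma)$. Decompose $v = \xi_v + \tau_v$ as in \eqref{decomp} into its $\omega$-orthogonal and tangent parts. For the tangent part, $\tau_v = \op{d}\!i_x(\xi(x))$ for a unique vector field $\xi$ on $\Sigma$, and $\alpha_{\tau_v} = \iota_\xi(i^{*}\omega) = \iota_\xi\sigma$; hence $\alpha_{\tau_v}$ is closed iff $\mathcal{L}_\xi\sigma = \op{d}\iota_\xi\sigma = 0$, i.e.\ iff $\xi \in \X(\Sigma,\,\sigma)$. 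For the normal part, I claim the map $\xi_v \mapsto \alpha_{\xi_v}$ is a linear isomorphism from $\Gamma_{\omega\text{-orth}}(i^{*}\op{T}\!M)$ onto $\Omega^1(\Sigma)$ (pointwise non-degeneracy of $\omega$ between $\op{T}i(\Sigma)$ and its $\omega$-complement on a symplectic surface), so $\xi_v \in \Gamma_{\closed}$ iff $\alpha_{\xi_v}$ is closed, and under this isomorphism $\Gamma_{\exact}\cap\Gamma_{\omega\text{-orth}}$ corresponds to exact $1$-forms. Since $\alpha_v = \alpha_{\xi_v} + \alpha_{\tau_v} = \alpha_{\xi_v} + \iota_\xi\sigma$ with $\iota_\xi\sigma$ closed when $\xi\in\X(\Sigma,\sigma)$, the subspace $\Gamma_{\closed}(i^{*}\op{T}\!M)$ splits as the normal-closed part plus $\X(\Sigma,\sigma)$; and the normal-closed part, which is all of $\Gamma_{\omega\text{-orth}}\cap\Gamma_{\closed}$, differs from $\Gamma_{\exact}(i^{*}\op{T}\!M)$ by the finitely many harmonic directions, so to get the stated model on the nose one records $\Gamma_{\closed} = \Gamma_{\exact} \oplus \mathcal{H} \oplus \X(\Sigma,\sigma)$ and absorbs $\mathcal{H}$ appropriately — here I expect the main subtlety, namely pinning down whether the intended model is literally $\Gamma_{\exact}\oplus\X(\Sigma,\sigma)$ (in which case one uses that $b^1(\Sigma)$ harmonic directions are already accounted for inside $\X(\Sigma,\sigma)$ via symplectic vector fields that are not Hamiltonian) or $\Gamma_{\closed}$, and making the splitting with the tangent/normal decomposition of \eqref{decomp} genuinely give a direct sum of convenient spaces. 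Modulo that bookkeeping, the manifold structure and the tangent-space computation follow from the implicit function theorem applied to $\Phi$.
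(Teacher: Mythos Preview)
Your overall strategy---realize $\mathcal{S}_{\op{e}}(\Sigma,\sigma)$ as $\Phi^{-1}(0)$ for $\Phi(f)=f^{*}\omega-\sigma$, compute $\op{d}\Phi_0(v)=\op{d}\alpha_v$, and identify the kernel---is genuinely different from the paper's, which builds explicit charts. Using Weinstein's symplectic tubular neighbourhood $\Phi_i\colon U\subset\op{N}\Sigma\to M$, the paper writes every nearby element of $\mathcal{S}_{\op{e}}(\Sigma,\sigma)$ as $x\mapsto\Phi_i(b(x),\xi(b(x)))$ with $\xi$ a normal section and $b\in\Diff(\Sigma,\sigma)$ near the identity, giving a chart $\ell\mapsto(\xi,\psi_e(b))$ into $\Gamma_{\exact}(i^{*}\op{T}M)\oplus\X(\Sigma,\sigma)$ directly. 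This route sidesteps the implicit function theorem entirely, and that matters: in the Convenient Setup there is no general implicit function theorem (smooth maps with split surjective derivative need not have the expected local normal form; see \cite{conv}), so your passage from ``$\op{d}\Phi_0$ is a split surjection'' to ``$\Phi^{-1}(0)$ is a submanifold'' is a real gap that would require a Nash--Moser-type argument or some other substitute you have not supplied.

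There is also a concrete error in your final identification. You assert that $\xi_v\mapsto\alpha_{\xi_v}$ is an isomorphism from the $\omega$-orthogonal sections onto $\Omega^1(\Sigma)$, citing ``pointwise non-degeneracy of $\omega$ between $\op{T}i(\Sigma)$ and its $\omega$-complement''. This is exactly backwards: by the very definition of the symplectic orthogonal, $\omega$ \emph{vanishes} on $\op{T}i(\Sigma)\times(\op{T}i(\Sigma))^{\omega}$, so $\alpha_{\xi_v}\equiv 0$ for every $\omega$-orthogonal $\xi_v$. In fact (Lemma~\ref{vanish}) the $\omega$-orthogonal sections are precisely $\Gamma_{\exact}(i^{*}\op{T}M)$. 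Your subsequent bookkeeping with a harmonic piece $\mathcal{H}$ and $b^{1}(\Sigma)$ is built on this mistaken picture. The correct splitting (Corollary~\ref{split}) is simply the tangent/normal decomposition $v\mapsto(\xi_v,(\op{d}i)^{-1}\tau_v)$ itself: the first component ranges over $\Gamma_{\exact}$ and the second over $\X(\Sigma,\sigma)$, with no leftover harmonic summand to absorb.
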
  

To prove the proposition, we first recall the symplectic tubular neighbourhood Theorem of Weinstein. 

\begin{noTitle} \label{normal}
Consider a symplectic embedding $i \colon (\Sigma, \, \sigma) \hookrightarrow (M,\, \omega)$.
The symplectic normal bundle $$\op{N}\! \Sigma=\{(x,v) \, | \, x \in \Sigma, \, v \in \op{T}_{i(x)}M / \op{T}_{x}i( \Sigma)\} \to \Sigma.$$ 
The \emph{minimal coupling form}, due to Sternberg \cite{stern}, is a closed $2$-form $\omega_{\op{N}\!\Sigma}$ with the following properties:
\begin{enumerate}
\item Its pullback to the fibers coincide with the fiberwise symplectic forms.
\item Its pullback to the zero section coincides with $\sigma$.
\item At the points of the zero section, the fibers of $\op{N}\! \Sigma$ are $\omega_{\op{N}\!\Sigma}$-orthogonal to the zero section.
\end{enumerate}
Consequently, $\omega_{\op{N}\!\Sigma}$ is non-degenerate near the zero section.

The symplectic normal bundle $\op{N}\!\Sigma$ can be realized as a subbundle of $\op{T\!}M$:
the symplectic orthocomplement of $\op{T}\!\Sigma=\op{T}\!i(\Sigma)$ in $\op{T}\!M|_{i(\Sigma)}$. In other words, the fiber $\op{N}\!_x \Sigma$ at $x \in \Sigma$  is identified with 
$$(\op{T}_{i(x)}i(\Sigma))^{\omega}=\{v \in \op{T}_{i(x)}M \, | \, \omega(v ,  w) =0 \text{ for every }w \in \op{T}_{i(x)}i(\Sigma) \}$$ with the symplectic form $\omega |_{(\op{T}_{i(x)}i(\Sigma))^{\omega}}$.

By the classical tubular neighbourhood theorem in differential topology combined with a theorem of Weinstein \cite[Theorem 4.1]{Weinstein}, there exists a neighbourhood $U$ of the zero section in $\op{N}\!\Sigma$ and a symplectic open embedding 
\begin{equation} \label{eqnormal}
\Phi_i \colon (U, \, \omega_{\op{N}\!\Sigma}) \to (M, \, \omega)
\end{equation}
 whose restriction to the zero section is $i$, and whose differential is $\op{d}\!i$ at every point of $\Sigma$.
\end{noTitle}

\begin{lemma}\label{path}
Let $\Sigma=(\Sigma, \, \sigma) \overset{i}{\hookrightarrow}  (M, \, \omega)$ be an embedded closed connected symplectic submanifold of dimension $2$. Let 
$v \in \op{T}_{i}C^{\infty}(\Sigma, \, M)$. If $v$ is everywhere $\omega$-orthogonal to $\Sigma$, then  $v$ equals the restriction $i^{*}{V_H}$ to $i(\Sigma)$ of a Hamiltonian vector field $V_{H}$ on $M$.
\end{lemma}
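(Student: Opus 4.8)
\medskip
\noindent\textbf{Proof plan.}
The plan is to construct the Hamiltonian function first on a tubular neighbourhood of $i(\Sigma)$, inside the Weinstein model \eqref{eqnormal}, and then to extend it to all of $M$ by a cut-off that does not change the Hamiltonian vector field along $i(\Sigma)$. Throughout I use the convention $\iota_{V_H}\omega=\op{d}\!H$; for the opposite convention one replaces $H$ by $-H$.

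First I would read the hypothesis on $v$ inside the model. Since $v$ is everywhere $\omega$-orthogonal to $\Sigma$, for each $x\in\Sigma$ and $\xi\in\op{T}_x\Sigma$ we have $\omega_x(v(x),\op{d}\!i_x\xi)=0$, i.e.\ $v(x)\in(\op{T}_{i(x)}i(\Sigma))^{\omega}$ (equivalently, $\alpha_v\equiv 0$). As $\Phi_i$ is a symplectomorphism carrying the zero section onto $i(\Sigma)$, its differential $\op{d}\!\Phi_i$ at a point $x$ of the zero section carries the $\omega_{\op{N}\!\Sigma}$-orthocomplement of the tangent space of the zero section isomorphically onto $(\op{T}_{i(x)}i(\Sigma))^{\omega}$; and by property (3) of the minimal coupling form that orthocomplement is exactly the fibre $\op{N}_x\Sigma$, the vertical subspace. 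Hence $s(x):=(\op{d}\!\Phi_i|_x)^{-1}(v(x))$ defines a smooth section $s$ of $\op{N}\!\Sigma\to\Sigma$, equivalently a vertical vector field along the zero section.

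Next, on a neighbourhood $U$ of the zero section small enough that $\omega_{\op{N}\!\Sigma}$ is non-degenerate on $U$, I would take the fibrewise-linear function
\[
\tilde H(x,e):=(\omega_{\op{N}\!\Sigma})^{\mathrm{fib}}_x\!\bigl(s(x),e\bigr),\qquad x\in\Sigma,\ e\in\op{N}_x\Sigma,
\]
where $(\omega_{\op{N}\!\Sigma})^{\mathrm{fib}}_x$ is the fibrewise symplectic form, which by property (1) is the restriction of $\omega_{\op{N}\!\Sigma}$ to the fibre. This $\tilde H$ is smooth and vanishes on the zero section, and the point to verify is that its Hamiltonian vector field satisfies $V_{\tilde H}(x,0)=s(x)$. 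Indeed, at $(x,0)$ the tangent space splits as $\op{T}_x\Sigma\oplus\op{N}_x\Sigma$, and by properties (1)--(3) the form $\omega_{\op{N}\!\Sigma}$ there equals $\sigma|_{\op{T}_x\Sigma}\oplus(\omega_{\op{N}\!\Sigma})^{\mathrm{fib}}_x$ with vanishing cross-terms; since $\tilde H$ is constant along the zero section, $\op{d}\!\tilde H_{(x,0)}$ annihilates $\op{T}_x\Sigma$ and restricts on $\op{N}_x\Sigma$ to $w\mapsto(\omega_{\op{N}\!\Sigma})^{\mathrm{fib}}_x(s(x),w)$, which is exactly $\iota_{s(x)}\bigl(\omega_{\op{N}\!\Sigma}|_{(x,0)}\bigr)$. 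Pushing this forward by the symplectomorphism $\Phi_i$, the function $H:=\tilde H\circ\Phi_i^{-1}$ on $\Phi_i(U)$ has $V_H=(\Phi_i)_{*}V_{\tilde H}$, so that $V_H(i(x))=\op{d}\!\Phi_i|_x\bigl(s(x)\bigr)=v(x)$ for every $x\in\Sigma$.

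Finally I would globalize. Since $\Sigma$ is closed, $i(\Sigma)$ is compact, so one may choose $\rho\in\op{C}^{\infty}(M,[0,1])$ with $\rho\equiv 1$ on a neighbourhood of $i(\Sigma)$ and $\mathrm{supp}\,\rho\subset\Phi_i(U)$; set $H_M:=\rho H$ on $\Phi_i(U)$ and $H_M:=0$ elsewhere. Then $H_M\in\op{C}^{\infty}(M)$ agrees with $H$ near $i(\Sigma)$, so its Hamiltonian vector field agrees with $V_H$ there, and therefore $i^{*}V_{H_M}=v$, which proves the lemma with $V_H:=V_{H_M}$. I expect the only point requiring genuine care to be the bookkeeping at the zero section — that $\op{d}\!\Phi_i$ sends the fibre (vertical) directions precisely onto the symplectic normal $(\op{T}\!i(\Sigma))^{\omega}$, and that $\omega_{\op{N}\!\Sigma}$ block-diagonalizes there, both of which follow from $\Phi_i$ being symplectic together with properties (1)--(3); after that everything is routine, and the cut-off is harmless precisely because $\tilde H$, hence $H$, vanishes on $i(\Sigma)$ (consistently with $\alpha_v\equiv 0$).
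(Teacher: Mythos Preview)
Your proof is correct and follows essentially the same route as the paper: both arguments pull $v$ back to a vertical section along the zero section of $\op{N}\!\Sigma$, define the Hamiltonian as the fibrewise-linear function $e\mapsto\omega_{\op{N}\!\Sigma}(s(x),e)$, push forward by the Weinstein tubular map $\Phi_i$, and globalize by a cut-off. Your write-up is somewhat more explicit than the paper's---in particular you spell out why $\op{d}\!\Phi_i$ carries fibres to $(\op{T}\!i(\Sigma))^{\omega}$ and why $V_{\tilde H}(x,0)=s(x)$ via the block-diagonal form of $\omega_{\op{N}\!\Sigma}$ at the zero section---but the construction is the same; the only quibble is your closing remark that the cut-off is harmless ``because $\tilde H$ vanishes on $i(\Sigma)$'': what actually matters is that $\rho\equiv 1$ near $i(\Sigma)$, so $H_M$ and $H$ agree there and hence so do their Hamiltonian vector fields.
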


\begin{proof}
By \S \ref{normal},
we can consider $\xi_v$ as a vector field $\xi_0$ on the zero section in $\op{N}\!\Sigma$;  it is enough to show that $\xi_0$ extends to a Hamiltonian vector field $\xi$ on a neighbourhood of the zero section in $\op{N}\!\Sigma$, since then the push forward of $\xi$  via $\Phi_i$ in \eqref{eqnormal} is a Hamiltonian vector field  on a neighbourhood of $\Sigma$ in $M$.  Then $\xi_v$ can be extended to a Hamiltonian vector field 
on $M$, using a cut-off function with a support that is close enough to $\Sigma$. 

By assumption, for every $x$ in the zero section, $\xi_0(x)$ is in $\op{N}\!_{x}\Sigma=(\op{T}_{i(x)}i(\Sigma))^{\omega}$. 
Each of the fibers 
$$((\op{T}_{i(x)}i(\Sigma))^{\omega} ,\, \omega_{\op{N}\! \Sigma} |_{(\op{T}_{i(x)}i(\Sigma))^{\omega}})=((\op{T}_{i(x)}i(\Sigma))^{\omega} ,\, \omega |_{(\op{T}_{i(x)}i(\Sigma))^{\omega}})$$ 
is a symplectic vector space; to each vector $\xi_0(x) \in (\op{T}_{i(x)}i(\Sigma))^{\omega}$ there corresponds a linear function $\omega_{\op{N}\! \Sigma} (\xi_0(x),\cdot)=\omega(\xi_0(x),\cdot)$ from the fiber to $\R$.  Taking the union of these functions over the points of the zero section, we get a function 
\begin{equation}  \label{h}
H \colon \op{N}\! \Sigma \to \R
\end{equation}
which is smooth in a neighbourhood of the zero section in  $(\op{N}\! \Sigma , \, \omega_{\op{N}\! \Sigma})$. Take $\xi$ to be the vector field defined by $$\op{d} \! H=\omega_{\op{N}\! \Sigma}(\xi,\cdot)$$ in a neighbourhood of the zero section on which $\omega_{\op{N}\! \Sigma}$ is non-degenerate.  
\end{proof}

\begin{lemma} \label{vanish}
Let $\Sigma=(\Sigma, \, \sigma) \overset{i}{\hookrightarrow}  (M, \, \omega)$ be an embedded closed connected symplectic submanifold of dimension $2$. Let 
$v \in \Gamma_{\closed}(i^{*}\op{T}\!M)$.
The following are equivalent.
\begin{enumerate}
\item The vector field $v$ equals the restriction $i^{*}{V_H}$ to $i(\Sigma)$ of a Hamiltonian vector field $V_{H}$ on $M$.
\item The form $\alpha_v=\omega(v, \op{d}\!i(\cdot))$ on $\Sigma$
is exact. 
\item $ {{({\omega^D}_{(\Sigma, \, \sigma)})}}_{i}(v,w)  =0$ for every $w$ that is everywhere tangent to $\Sigma$ and satisfies $\mathcal{L}_{{(\op{d}\!i)}^{-1}w}\sigma=0$.
\item $v$ is everywhere $\omega$-orthogonal to $\Sigma$.
\end{enumerate} 
\end{lemma}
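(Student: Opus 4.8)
\textbf{Proof proposal for Lemma \ref{vanish}.} The plan is to establish the cycle of implications $(4)\Rightarrow(1)\Rightarrow(3)\Rightarrow(2)\Rightarrow(4)$, so that all four statements are equivalent. The implication $(4)\Rightarrow(1)$ is exactly Lemma \ref{path}, so nothing new is needed there. For $(1)\Rightarrow(3)$, I would write $v=i^{*}V_H$ with $\op{d}H=\omega(V_H,\cdot)$, and for $w$ everywhere tangent to $\Sigma$ with $\eta:=(\op{d}\!i)^{-1}w$ satisfying $\mathcal{L}_\eta\sigma=0$, compute
\begin{equation*}
({\omega^D}_{(\Sigma,\,\sigma)})_i(v,w)=\int_\Sigma \omega_{i(x)}(V_H(i(x)),\op{d}\!i_x\eta(x))\,\sigma=\int_\Sigma (\op{d}H\circ\op{d}\!i)(\eta)\,\sigma=\int_\Sigma \op{d}(i^{*}H)(\eta)\,\sigma.
\end{equation*}
Now $\op{d}(i^{*}H)(\eta)\,\sigma=\mathcal{L}_\eta(i^{*}H)\,\sigma=\mathcal{L}_\eta\big((i^{*}H)\,\sigma\big)$ since $\mathcal{L}_\eta\sigma=0$, and this is $\op{d}\iota_\eta\big((i^{*}H)\sigma\big)$ because $(i^{*}H)\sigma$ is a top form on the closed surface $\Sigma$, hence closed. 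Integrating over the closed manifold $\Sigma$ and applying Stokes' theorem gives $0$; this proves $(3)$.

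For $(3)\Rightarrow(2)$: since $v\in\Gamma_{\closed}(i^{*}\op{T}\!M)$, $\alpha_v$ is a closed $1$-form on the closed connected surface $\Sigma$; I want to show its de Rham class vanishes, which by the pairing $H^1(\Sigma)\times H_1(\Sigma)\to\R$ being perfect is equivalent to $\int_\gamma\alpha_v=0$ for every loop $\gamma$, equivalently (using Poincaré duality $H^1(\Sigma)\cong H^1_{c}(\Sigma)=H_1(\Sigma)^{*}$ via $\beta\mapsto\int_\Sigma\beta\wedge(\cdot)$) to $\int_\Sigma\alpha_v\wedge\beta=0$ for all closed $1$-forms $\beta$. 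Given such a $\beta$, I would produce a vector field $\eta$ on $\Sigma$ with $\iota_\eta\sigma=\beta$ — possible and unique since $\sigma$ is a symplectic (area) form — and note that $\mathcal{L}_\eta\sigma=\op{d}\iota_\eta\sigma=\op{d}\beta=0$, so $w:=\op{d}\!i(\eta)$ is an admissible test vector in $(3)$. Then
\begin{equation*}
0=({\omega^D}_{(\Sigma,\,\sigma)})_i(v,w)=\int_\Sigma\omega_{i(x)}(v(x),\op{d}\!i_x\eta(x))\,\sigma=\int_\Sigma(\alpha_v)(\eta)\,\sigma=\int_\Sigma\alpha_v\wedge\iota_\eta\sigma=\int_\Sigma\alpha_v\wedge\beta,
\end{equation*}
using the pointwise identity $(\alpha_v)(\eta)\,\sigma=\alpha_v\wedge\iota_\eta\sigma$ valid on a surface. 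Since $\beta$ ranges over all closed $1$-forms and every class in $H^1(\Sigma)$ is represented by such a $\beta$, the class $[\alpha_v]$ pairs trivially with all of $H^1(\Sigma)$, hence $[\alpha_v]=0$, i.e., $\alpha_v$ is exact; this is $(2)$.

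Finally, for $(2)\Rightarrow(4)$, write $v=\xi_v+\tau_v$ using the decomposition \eqref{decomp} into an $\omega$-orthogonal part $\xi_v$ and a tangent part $\tau_v$. For the tangent part, $\op{d}\!i^{-1}\tau_v$ is a vector field on $\Sigma$, so $\alpha_{\tau_v}=\omega(\tau_v,\op{d}\!i(\cdot))=\iota_{\op{d}\!i^{-1}\tau_v}(i^{*}\omega)=\iota_{\op{d}\!i^{-1}\tau_v}\sigma$, which is a closed $1$-form since $\tau_v$ contributes to the closedness of $\alpha_v$ only if $\mathcal{L}_{\op{d}\!i^{-1}\tau_v}\sigma=0$; more precisely, $\alpha_{\xi_v}$ is automatically closed by Lemma \ref{path} and the construction of \S\ref{normal} (it equals $\op{d}$ of the Hamiltonian $H$ of \eqref{h} restricted to the zero section), hence $\alpha_{\tau_v}=\alpha_v-\alpha_{\xi_v}$ is closed, so $\mathcal{L}_{\op{d}\!i^{-1}\tau_v}\sigma=\op{d}\alpha_{\tau_v}=0$, and since $\alpha_v$ and $\alpha_{\xi_v}$ are both exact, so is $\alpha_{\tau_v}=\iota_{\op{d}\!i^{-1}\tau_v}\sigma$. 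But on a closed surface an exact area-contraction forces the vector field to vanish in cohomology; in fact $\int_\Sigma f\sigma=\int_\Sigma \op{d}\iota_{\op{d}\!i^{-1}\tau_v}(\text{something})$ — wait, more directly: $\iota_{\op{d}\!i^{-1}\tau_v}\sigma=\op{d}g$ means the flux of $\op{d}\!i^{-1}\tau_v$ through every loop vanishes, and combined with $\mathcal{L}_{\op{d}\!i^{-1}\tau_v}\sigma=0$ this is the statement that $\op{d}\!i^{-1}\tau_v$ is a Hamiltonian vector field on $(\Sigma,\sigma)$; one then checks that such a tangential $w=\tau_v$ can be absorbed, i.e., the tangential Hamiltonian vector field on $\Sigma$ does not obstruct writing $v$ as $\omega$-orthogonal — the cleanest route is to observe that if $\alpha_v$ is exact, say $\alpha_v=\op{d}g$ with $g\in\op{C}^{\infty}(\Sigma)$, then extending $g$ to $\tilde g$ on $M$ and setting $V_H$ the Hamiltonian vector field of $\tilde g$, the difference $v-i^{*}V_H$ has $\alpha_{v-i^{*}V_H}=0$, i.e.\ $v-i^{*}V_H$ is everywhere in the $\omega$-orthogonal complement of $\op{T}\Sigma$; but I also need $i^{*}V_H$ itself to be $\omega$-orthogonal, which it is not in general — so instead I choose the extension $\tilde g$ so that $\op{d}\tilde g$ vanishes on $\op{T}i(\Sigma)$ along $i(\Sigma)$ except in the normal directions dictated by $g$; concretely, using the tubular model $\Phi_i$ of \S\ref{normal} and the coupling form, take $\tilde g$ to agree with the fiber-linear-plus-$g$ construction so that its Hamiltonian vector field restricted to the zero section is $\omega$-orthogonal and has $\alpha=\op{d}g$. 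Then $v-i^{*}V_H$ has vanishing $\alpha$ and is $\omega$-orthogonal iff $i^{*}V_H$ is, and has $\alpha\equiv 0$ means it is everywhere tangent to $\Sigma$ with $\iota\sigma$ zero, hence is zero; thus $v=i^{*}V_H$ is $\omega$-orthogonal, giving $(4)$ (and re-deriving $(1)$).

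The main obstacle is the last step $(2)\Rightarrow(4)$: decoupling the tangential and normal parts requires care because a general Hamiltonian vector field on $M$ restricts to $i(\Sigma)$ with both a tangential and a normal component, so one must choose the extension of $g$ adapted to the Weinstein/Sternberg model of \S\ref{normal} — picking $\tilde g$ whose Hamiltonian flow preserves $i(\Sigma)$ pointwise up to first order in the normal directions — in order to land exactly in the $\omega$-orthogonal complement; the bookkeeping with the minimal coupling form $\omega_{\op{N}\!\Sigma}$ and the identity $\alpha_v(\eta)\sigma=\alpha_v\wedge\iota_\eta\sigma$ on a surface is the technical heart.
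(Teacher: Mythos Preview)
Your implications $(4)\Rightarrow(1)\Rightarrow(3)\Rightarrow(2)$ are correct. The Poincar\'e-duality argument for $(3)\Rightarrow(2)$, via the pointwise identity $\alpha_v(\eta)\,\sigma=\alpha_v\wedge\iota_\eta\sigma$ on a surface, is clean and genuinely different from the paper, which runs the cycle $(1)\Rightarrow(2)\Rightarrow(3)\Rightarrow(4)\Rightarrow(1)$ and never isolates $(3)\Rightarrow(2)$.

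The real gap is $(2)\Rightarrow(4)$, and your visible hesitation there is warranted: as stated, this implication is false. Take any non-constant $h\in\op{C}^\infty(\Sigma)$, let $X_h$ be its $\sigma$-Hamiltonian vector field, and set $v:=\op{d}\!i(X_h)\in\Gamma(i^{*}\op{T}\!M)$. Then $\alpha_v=\iota_{X_h}\sigma$ is exact, so $v\in\Gamma_{\exact}(i^{*}\op{T}\!M)$ and $(2)$ holds; yet $v$ is nonzero and everywhere tangent to $\Sigma$, hence not $\omega$-orthogonal to it, so $(4)$ fails. (This $v$ also satisfies $(1)$ --- extend $h$ to $M$ by pulling back along the projection $\op{N}\!\Sigma\to\Sigma$ in the Weinstein model --- and therefore $(3)$, by your own $(1)\Rightarrow(3)$.) So none of the constructions you sketch in your last paragraph can succeed. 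For comparison, the paper closes its cycle via $(3)\Rightarrow(4)$ by testing against $w=\tau_v$ and deducing $\tau_v=0$ from $\int_\Sigma\sigma\big((\op{d}\!i)^{-1}\tau_v,(\op{d}\!i)^{-1}\tau_v\big)\,\sigma=0$; but $\sigma(X,X)\equiv 0$ by antisymmetry, so that integral vanishes identically and the conclusion does not follow --- the same counterexample applies. What does hold is $(4)\Rightarrow(1)\Leftrightarrow(2)\Leftrightarrow(3)$; the missing direction back to $(4)$ is only true modulo tangential Hamiltonian vector fields on $(\Sigma,\sigma)$.
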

Recall that a vector field $X$ on $M$ is Hamiltonian if the form $\iota_{X}\omega$ is exact. Here $\op{d}\!i \colon \op{T}\!\Sigma \to \op{T}\!(i (\Sigma))$.

\begin{proof}

\begin{itemize}
\item  [ ]
\item [ $1 \Rightarrow 2$] If $V_H$ is a Hamiltonian vector field on $M$, then on $\Sigma$ the form $\alpha_{i^{*}{V_H}}$ equals $\op{d} \! h$ with $h=H \circ i$.
\item [ $2 \Rightarrow 3$]  If $\alpha_v = \op{d} \! h$ for a function $h \colon \Sigma \to \R$ then for every $w$ everywhere tangent to $\Sigma$ such that $(\op{d}\!i)^{-1}w \in \X(\Sigma, \, \sigma)$, 
 \begin{eqnarray}  
{{({\omega^D}_{(\Sigma, \, \sigma)})}}_{i}(v,w) &=& \int_{\Sigma} {\omega(v,w) \sigma}=\int_{\Sigma} {\op{d} \! h((\op{d}\!i)^{-1}w ) \sigma} \nonumber \\
                                           &=& \int_{\Sigma}(\mathcal{L}_{(\op{d}\!i)^{-1}w } h) \sigma=\int_{\Sigma}\mathcal{L}_{(\op{d}\!i)^{-1}w } (h \sigma)=\mathcal{L}_{(\op{d}\!i)^{-1}w } \int_{\Sigma} h\sigma \nonumber \\
                                           &=& \lim_{t \to 0} \frac{{\phi_t}^{*}\int_{\Sigma}h\sigma-\int _{\Sigma} h \sigma}{t}=0. \label{new}
\end{eqnarray}
The fourth equality is since $\mathcal{L}_{(\op{d}\!i)^{-1}w }\sigma=0$ and the fact that $\mathcal{L}_{(\op{d}\!i)^{-1}w }(h \sigma)=(\mathcal{L}_{(\op{d}\!i)^{-1}w }h)\sigma+h(\mathcal{L}_{(\op{d}\!i)^{-1}w }\sigma)$; the fifth equality is since $\Sigma$ is compact, and the last equality is since for an orientation preserving integral curve $t \to \phi_t$ and a $2$-form $\gamma$ on $\Sigma$, $\int_{\Sigma} \gamma$ is invariant under pulling back by $\phi_t$.

\item[ $3 \Rightarrow 4$] 
 Assume that $\int_{\Sigma} \alpha_v(w) \sigma=0$ for every $w$ that is everywhere tangent to $\Sigma$ and satisfies $\mathcal{L}_{{(\op{d}\!i)}^{-1}w}\sigma=0$. 
Decompose $v=\xi_v+\tau_v$, to a vector field $\xi_v$ everywhere $\omega$-orthogonal to $\Sigma$ and a vector field $\tau_v$ everywhere tangent to $\Sigma$,
 as in \eqref{decomp}, so $\alpha_v(\cdot)=\omega(\xi_v,\op{d}\!i(\cdot))+\omega(\tau_v,\op{d}\!i(\cdot))$. By assumption $\alpha_v$ is closed; by Lemma \ref{path} and the step $1 \Rightarrow 2$ above, $\alpha_{\xi_v}$ is closed, hence $\alpha_{\tau_v}$ is closed. When we consider $\tau_v \colon \Sigma \to \op{d}i(\op{T}\! \Sigma) \xrightarrow[]{(\op{d}\!i)^{-1}} \op{T}\!\Sigma$ as a vector field on $\Sigma$, we conclude that $\sigma({(\op{d}\!i)^{-1}}\tau_v,\cdot)$ is closed on $\Sigma$. By Cartan's formula and since $\sigma$ is closed, we get that ${(\op{d}\!i)^{-1}}\tau_v \in \X(\Sigma, \, \sigma)$.
 By the assumption on $v$ and the choice of $\xi_v$, for every $w$ that is everywhere tangent to $\Sigma$ and satisfies $\mathcal{L}_{{(\op{d}\!i)}^{-1}w}\sigma=0$,
$$0=\int_{\Sigma} \alpha_v(w)\sigma=\int_{\Sigma} \sigma({(\op{d}\!i)^{-1}}\tau_v,w)\sigma.$$ In particular, $$\int_{\Sigma} \sigma((\op{d}\!i)^{-1}\tau_v,(\op{d}\!i)^{-1}\tau_v) \sigma=0.$$ Thus (since $\Sigma$ is connected and $\sigma$ is a volume form) $\tau_v=0$, so $\alpha_v=\xi_v$ is everywhere $\omega$-orthogonal to $\Sigma$. 

\item [ $4 \Rightarrow 1$] By Lemma \ref{path}.
\end{itemize}

\end{proof}

\begin{remark} \label{123}
Notice that the steps $1 \Rightarrow 2$, $2 \Rightarrow 3$ and $4 \Rightarrow 1$ hold for every $v  \in \op{T}_{i}C^{\infty}(\Sigma, \, M)$. Only the step $3 \Rightarrow 4$ requires
$v \in \Gamma_{\closed}(i^{*}\op{T}\!M)$.
\end{remark}

\begin{lemma} \label{identifyx}
Let $\Sigma=(\Sigma, \, \sigma) \overset{i}{\hookrightarrow}  (M, \, \omega)$ be an embedded closed connected symplectic submanifold of dimension $2$.
The map $v \mapsto {\op{d}\!i}^{-1}\tau_v$ from $\Gamma_{\closed}(i^{*}\op{T}\!M)$ is onto  $\X(\Sigma, \, \sigma)$ and restricts to a one-to-one and onto map from the subspace $\{\tau_v\, | \, v \in  \Gamma_{\closed}(i^{*}\op{T}\!M)\}$. 
\end{lemma}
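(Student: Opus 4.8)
The plan is to reduce everything to one elementary identity and then invoke Lemmas \ref{path} and \ref{vanish}. Because $i^{*}\omega=\sigma$, for any $w\in\op{T}_iC^\infty(\Sigma,\,M)$ that is everywhere tangent to $\Sigma$ the $1$-form $\alpha_w$ on $\Sigma$ equals $\iota_{(\op{d}\!i)^{-1}w}\sigma$: for $\xi\in\op{T}_x\Sigma$,
$$
(\alpha_w)_x(\xi)=\omega_x\bigl(w(x),\op{d}\!i_x\xi\bigr)=(i^{*}\omega)_x\bigl((\op{d}\!i)^{-1}w(x),\,\xi\bigr)=\sigma_x\bigl((\op{d}\!i)^{-1}w(x),\,\xi\bigr).
$$
Combining this with Cartan's formula and $\op{d}\sigma=0$, one gets for a vector field $\eta$ on $\Sigma$ the equivalences: $\iota_\eta\sigma$ closed $\iff$ $\mathcal{L}_\eta\sigma=0$ $\iff$ $\eta\in\X(\Sigma,\,\sigma)$. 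This is the only computation needed.

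First I would check that the map takes values in $\X(\Sigma,\,\sigma)$. Let $v\in\Gamma_{\closed}(i^{*}\op{T}\!M)$ and decompose $v=\xi_v+\tau_v$ as in \eqref{decomp}. Since $v\mapsto\alpha_v$ is linear, $\alpha_v=\alpha_{\xi_v}+\alpha_{\tau_v}$. By Lemma \ref{path}, $\xi_v=i^{*}V_H$ for a Hamiltonian vector field $V_H$ on $M$, so by the step $1\Rightarrow 2$ of Lemma \ref{vanish} the form $\alpha_{\xi_v}$ is exact, in particular closed; as $\alpha_v$ is closed by hypothesis, $\alpha_{\tau_v}=\alpha_v-\alpha_{\xi_v}$ is closed. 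By the identity above $\alpha_{\tau_v}=\iota_{(\op{d}\!i)^{-1}\tau_v}\sigma$, hence $(\op{d}\!i)^{-1}\tau_v\in\X(\Sigma,\,\sigma)$.

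Next, surjectivity onto $\X(\Sigma,\,\sigma)$: given $\eta\in\X(\Sigma,\,\sigma)$, put $v:=\op{d}\!i(\eta)\in\Gamma(i^{*}\op{T}\!M)$, i.e.\ $v(x)=\op{d}\!i_x(\eta(x))$. This $v$ is everywhere tangent to $\Sigma$, so by uniqueness of the decomposition $\xi_v=0$ and $\tau_v=v$; and $\alpha_v=\iota_\eta\sigma$ is closed since $\mathcal{L}_\eta\sigma=0$, whence $v\in\Gamma_{\closed}(i^{*}\op{T}\!M)$ and $(\op{d}\!i)^{-1}\tau_v=\eta$. Finally, since each $\op{d}\!i_x\colon\op{T}_x\Sigma\to\op{T}_{i(x)}M$ is injective, the map $\tau\mapsto(\op{d}\!i)^{-1}\tau$ is injective on vector fields everywhere tangent to $\Sigma$, so its restriction to $\{\tau_v\mid v\in\Gamma_{\closed}(i^{*}\op{T}\!M)\}$ is a bijection onto $\X(\Sigma,\,\sigma)$. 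I do not expect a genuine obstacle here; the only point requiring care is the bookkeeping between a vector field $\eta$ on $\Sigma$, the section $\op{d}\!i(\eta)$ of $i^{*}\op{T}\!M$, and the tangent part $\tau_v$, all linked by $i^{*}\omega=\sigma$.
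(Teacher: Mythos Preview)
Your proof is correct and follows essentially the same route as the paper: both decompose $v=\xi_v+\tau_v$, use Lemma~\ref{path} together with the implication $1\Rightarrow 2$ of Lemma~\ref{vanish} to see that $\alpha_{\xi_v}$ is exact (hence $\alpha_{\tau_v}$ closed), then use $i^{*}\omega=\sigma$ and Cartan's formula to conclude $(\op{d}\!i)^{-1}\tau_v\in\X(\Sigma,\,\sigma)$, and reverse the argument for surjectivity. Your write-up is slightly more explicit in isolating the identity $\alpha_w=\iota_{(\op{d}\!i)^{-1}w}\sigma$ and in spelling out injectivity on the subspace, but the underlying argument is the same.
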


\begin{proof}
First, we show that the image is a subset of $\X(\Sigma, \, \sigma)$: by assumption, $v \in \Gamma_{\closed}(i^{*}\op{T}\!M)$; by Lemma \ref{path} and $1 \Rightarrow 2$ in Lemma \ref{vanish}, 
 $\xi_v \in  \Gamma_{\exact}(i^{*}\op{T}\!M) \subset \Gamma_{\closed}(i^{*}\op{T}\!M)$,
hence, $\tau_v =v-\xi_v$ is in the space $\Gamma_{\closed}(i^{*}\op{T}\!M)$. In other words,
\begin{equation}\label{subspace}
\op{d}\!\iota_{\tau_v}\omega|_{\op{d}\!i(\op{T}\!\Sigma)}=0.
\end{equation}
 Therefore, since $i$ is a symplectic embedding, $\op{d}\! \iota_{{\op{d}\!i}^{-1}\tau_v}\sigma=0$. 
Thus, by Cartan's formula, since $\sigma$ is a closed form, $\mathcal{L}_{{\op{d}\!i}^{-1}\tau_v}{\sigma}=0$.  Reversing the argument, we get that for every $\tau \in \X(\Sigma, \, \sigma)$, the vector $\op{d}\!i\tau$ is a vector in $\Gamma_{\closed}(i^{*}\op{T}\!M)$ that is everywhere tangent to $\Sigma$, hence the map is onto.

 By \eqref{subspace}, the space  $\{\tau_v\, | \, v \in  \Gamma_{\closed}(i^{*}\op{T}\!M)\}$ is a subspace of  $\Gamma_{\closed}(i^{*}\op{T}\!M)$. By the above argument, the map $\tau_v \mapsto {\op{d}\!i}^{-1}\tau_v$ on it is onto $\X(\Sigma, \, \sigma)$.

\end{proof}

\begin{corollary}\label{split}
Let $\Sigma=(\Sigma, \, \sigma) \overset{i}{\hookrightarrow}  (M, \, \omega)$ be an embedded closed connected symplectic submanifold of dimension $2$.
Then $$\Gamma_{\closed}(i^{*}\op{T}\!M)= \Gamma_{\exact}(i^{*}\op{T}\!M) \oplus \X(\Sigma, \, \sigma).$$
The splitting gives a convenient space structure on $\Gamma_{\exact}(i^{*}\op{T}\!M) \oplus \X(\Sigma, \, \sigma)$. 
\end{corollary}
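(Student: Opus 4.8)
The plan is to identify each summand with a concrete space of sections and then read off the splitting from the fiberwise decomposition $\op{T}\!M|_{i(\Sigma)} = \op{d}\!i(\op{T}\!\Sigma) \oplus \op{N}\!\Sigma$ recorded in \S\ref{normal}. First I would show that $\Gamma_{\exact}(i^{*}\op{T}\!M)$ is precisely the space of sections everywhere $\omega$-orthogonal to $\Sigma$, that is, the space $\Gamma(\op{N}\!\Sigma)$ of sections of the subbundle $\op{N}\!\Sigma=(\op{T}i(\Sigma))^{\omega}\subset \op{T}\!M|_{i(\Sigma)}$. If $v$ is everywhere $\omega$-orthogonal, then Lemma \ref{path} gives $v=i^{*}V_{H}$ for a Hamiltonian $V_{H}$ on $M$, and the implication $1\Rightarrow 2$ of Lemma \ref{vanish} — valid for every $v$ by Remark \ref{123} — gives $\alpha_{v}=\op{d}(H\circ i)$, which is exact and in particular closed; hence $v\in\Gamma_{\exact}(i^{*}\op{T}\!M)$. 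Conversely $\Gamma_{\exact}(i^{*}\op{T}\!M)\subseteq\Gamma_{\closed}(i^{*}\op{T}\!M)$, so the equivalence $2\Leftrightarrow 4$ of Lemma \ref{vanish} applies and shows that every $v\in\Gamma_{\exact}(i^{*}\op{T}\!M)$ is everywhere $\omega$-orthogonal to $\Sigma$.

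Next I would use the fiberwise splitting itself. Since $i$ is a symplectic embedding, $\op{T}_{i(x)}M=\op{d}\!i_{x}(\op{T}_{x}\Sigma)\oplus(\op{d}\!i_{x}\op{T}_{x}\Sigma)^{\omega}$ for each $x\in\Sigma$, and this varies smoothly with $x$; it is the decomposition $v=\xi_{v}+\tau_{v}$ of \eqref{decomp}, and $v\mapsto\xi_{v}$, $v\mapsto\tau_{v}$ are bounded linear endomorphisms of $\Gamma(i^{*}\op{T}\!M)$ (post-composition with a smooth bundle projection). Given $v\in\Gamma_{\closed}(i^{*}\op{T}\!M)$, the part $\xi_{v}$ is everywhere $\omega$-orthogonal, hence $\xi_{v}\in\Gamma_{\exact}(i^{*}\op{T}\!M)$ by the previous step; consequently $\tau_{v}=v-\xi_{v}$ has $\alpha_{\tau_{v}}=\alpha_{v}-\alpha_{\xi_{v}}$ closed, so $\tau_{v}\in\Gamma_{\closed}(i^{*}\op{T}\!M)$, and, being everywhere tangent to $\Sigma$, it is identified by Lemma \ref{identifyx} with $(\op{d}\!i)^{-1}\tau_{v}\in\X(\Sigma,\sigma)$; conversely $\op{d}\!i$ maps $\X(\Sigma,\sigma)$ into the everywhere-tangent sections in $\Gamma_{\closed}(i^{*}\op{T}\!M)$. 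Thus $\Gamma_{\closed}(i^{*}\op{T}\!M)=\Gamma_{\exact}(i^{*}\op{T}\!M)+\op{d}\!i(\X(\Sigma,\sigma))$, and the sum is direct because a section lying in both is everywhere $\omega$-orthogonal and everywhere tangent, so it takes values in $\op{d}\!i_{x}\op{T}_{x}\Sigma\cap(\op{d}\!i_{x}\op{T}_{x}\Sigma)^{\omega}=\{0\}$ (the symplectic subspace $\op{T}_{i(x)}i(\Sigma)$ meets its $\omega$-orthocomplement trivially) and hence vanishes. This gives the displayed splitting, the second summand being realized through the linear isomorphism $\op{d}\!i\colon\X(\Sigma,\sigma)\to\op{d}\!i(\X(\Sigma,\sigma))$.

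For the convenient structure I would argue as follows. The space $\Gamma_{\closed}(i^{*}\op{T}\!M)$ is the kernel of the bounded linear map $v\mapsto\op{d}\alpha_{v}$ into $\Omega^{2}(\Sigma)$, hence $c^{\infty}$-closed in the convenient space $\Gamma(i^{*}\op{T}\!M)$ and therefore itself convenient; $\Gamma_{\exact}(i^{*}\op{T}\!M)=\Gamma(\op{N}\!\Sigma)$ carries its natural convenient structure as a section space of a finite-rank vector bundle over the compact manifold $\Sigma$; and $\X(\Sigma,\sigma)$, being the kernel of the bounded linear map $\xi\mapsto\mathcal{L}_{\xi}\sigma=\op{d}\iota_{\xi}\sigma$, is $c^{\infty}$-closed in $\X(\Sigma)$ and hence convenient. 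Since $v\mapsto\xi_{v}$ restricts to a bounded linear projection $\Gamma_{\closed}(i^{*}\op{T}\!M)\to\Gamma_{\exact}(i^{*}\op{T}\!M)$ and $\op{d}\!i$ and $(\op{d}\!i)^{-1}$ are bounded, the linear bijection $(w,\xi)\mapsto w+\op{d}\!i\,\xi$ and its inverse $v\mapsto(\xi_{v},(\op{d}\!i)^{-1}\tau_{v})$ are bounded; so the product convenient structure on $\Gamma_{\exact}(i^{*}\op{T}\!M)\oplus\X(\Sigma,\sigma)$ agrees with the one transported from $\Gamma_{\closed}(i^{*}\op{T}\!M)$.

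I expect the only real obstacle to be the bookkeeping in the Convenient Setup rather than the algebra: one must verify that the fiberwise projection along $\op{d}\!i(\op{T}\!\Sigma)$ onto $\op{N}\!\Sigma$ is a smooth bundle map (so that $v\mapsto\xi_{v}$ is bounded linear), that $\op{d}\!i$ and $(\op{d}\!i)^{-1}$ are bounded, and that the kernels of the relevant first-order differential operators are $c^{\infty}$-closed — all routine, but needed to promote the algebraic direct sum to a splitting of convenient vector spaces.
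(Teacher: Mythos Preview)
Your proof is correct and follows essentially the same route as the paper's: both decompose $v\in\Gamma_{\closed}(i^{*}\op{T}\!M)$ as $\xi_{v}+\tau_{v}$ via \eqref{decomp}, invoke Lemma~\ref{vanish} to identify the $\omega$-orthogonal part with $\Gamma_{\exact}(i^{*}\op{T}\!M)$ and Lemma~\ref{identifyx} to identify the tangent part with $\X(\Sigma,\sigma)$, and then check that $\Gamma_{\closed}(i^{*}\op{T}\!M)$ is convenient as the kernel of $v\mapsto\op{d}\alpha_{v}$. Your treatment is in fact a bit more explicit than the paper's --- you spell out the directness via $\op{d}\!i_{x}(\op{T}_{x}\Sigma)\cap(\op{d}\!i_{x}\op{T}_{x}\Sigma)^{\omega}=\{0\}$ and the identification $\Gamma_{\exact}(i^{*}\op{T}\!M)=\Gamma(\op{N}\!\Sigma)$ --- and you phrase the convenient-structure verification in terms of bounded linear maps rather than the paper's equivalent smooth-curve criterion, but the substance is the same.
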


\begin{proof}
A vector 
$v \in \Gamma_{\closed}(i^{*}\op{T}\!M)$
decomposes as
$
\xi_v+\tau_v
$
where $\xi_v$ is everywhere $\omega$-orthogonal to $\Sigma$ and $\tau_v$ is everywhere tangent to $\Sigma$.
Such a decomposition exists and is unique, e.g., by Remark \ref{extend} and Corollary \ref{corj}.

By Lemma \ref{vanish}, the space  $\{\xi_v\, | \, v \in  \Gamma_{\closed}(i^{*}\op{T}\!M)\}$ equals  $\Gamma_{\exact}(i^{*}\op{T}\!M)$.
By Lemma \ref{identifyx}, the space $\{\tau_v\, | \, v \in  \Gamma_{\closed}(i^{*}\op{T}\!M)\}$ is identified with $\X(\Sigma, \, \sigma)$. Notice that the maps $v \overset{h_1}\mapsto (\xi_v, \, (\op{d}\! i)^{-1}\tau_v)$ and $(\xi, \, w) \overset{h_2}\mapsto \xi + \op{d}\!i w$ send smooth curves to smooth curves. Moreover, for $c_1 \colon \R \to  \Gamma_{\exact}(i^{*}\op{T}\!M) \oplus \X(\Sigma, \, \sigma)$, if $c_2 \colon \R \to \Gamma_{\closed}(i^{*}\op{T}\!M)$ satisfies ${c_2}'=h_2(c_1)$ then $(h_1(c_2))'=c_1$. 
 
The space $\Gamma_{\closed}(i^{*}\op{T}\!M)$ is convenient since it is the kernel of the continuous map $v \mapsto \alpha_v$ composed on $\alpha \to \op{d}\! \alpha$, from the convenient space $\Gamma(i^{*}\op{T}\!M)$ to the space $\Omega^{1}(\Sigma)$ of $1$-forms on $\Sigma$ and then to $\Omega^{2}(\Sigma)$.
\end{proof}

\begin{proof}[Proof of Proposition \ref{model}]
Given $i \in \mathcal{S}_{\op{e}}(\Sigma, \, \sigma)$, by Weinstein's symplectic tubular neighbourhood theorem (see \S \ref{normal}), the symplectic embedding $i$ can be extended on a neighbourhood $U$ of the zero section in $\op{N}\!\Sigma$ to a symplectic embedding $\Phi_{i} \colon U \to M$. 
By the identification of each fiber $\op{N}_{x}\Sigma$ with $(\op{T}_{i(x)}i(\Sigma))^{\omega}$, and Lemma \ref{vanish}, the elements of $U$ are of the form $(y,\xi({y}))$ where $\xi$ is in 
$\Gamma_{\exact}(i^{*}\op{T}\!M)$. The space $\X(\Sigma, \, \sigma)$ is the Lie algebra of $\Diff(\Sigma, \, \sigma)$, see \cite[43.12]{conv}.
Let $V_{e}$ be 
a chart neighbourhood of the identity map $e \in \Diff (\Sigma, \, \sigma)$ and denote by $$\psi_{e} \colon V_{e} \to \X (\Sigma, \, \sigma)$$ the 
corresponding chart in an atlas on $\Diff (\Sigma, \, \sigma)$. 
Define 
$$W_{i} :=\{ \ell \in \mathcal{S}_{\op{e}}(\Sigma, \, \sigma) \, | \, \ell(x) = \Phi_{i}(b(x), \xi({b(x)})) \text{ for } \xi \in \Gamma_{\exact}(i^{*}\op{T}\!M),  \, b \in V_{e} \text{ s.t.\ }(b(x),\xi({b(x)}))\in U \, \forall x \in \Sigma \},$$ 
$$ \phi_{i} \colon W_i \to \Gamma_{\exact}(i^{*}\op{T}M) \oplus \X(\Sigma, \, \sigma), \, \, \, \, \, \, \,\,\,\,\,\,\,\,\, \phi_{i}(\ell) := (\xi, \psi_{e}(b)).$$
By part (2) of Corollary \ref{split}, $\Gamma_{\exact}(i^{*}\op{T}\!M) \oplus \X(\Sigma, \, \sigma)$ is a convenient space. The set $\{(b(x),\xi({b(x)}))\in U \, \forall x \in \Sigma\}$ is $c^{\infty}$-open in 
$\Gamma_{\exact}(i^{*}\op{T}\!M) \oplus \X(\Sigma, \, \sigma)$. Thus $\phi_i$ is a bijection of $W_i$ onto a $c^{\infty}$-open subset of  $\Gamma_{\exact}(i^{*}\op{T}\!M)  \oplus \X(\Sigma, \, \sigma)$.
The collection $(W_{i},\phi_{i})_{i \in  \mathcal{S}_{\op{e}}(\Sigma, \, \sigma)}$
defines a smooth atlas on $\mathcal{S}_{\op{e}}(\Sigma, \, \sigma)$: the chart changings $\phi_{ik}$ are smooth by smoothness of the exponential map and of each symplectic 
embedding $\Phi_i$.
\end{proof}

 \begin{lemma} \label{vtangent}
 Let $\Sigma=(\Sigma, \, \sigma) \overset{i}{\hookrightarrow}  (M, \, \omega)$ be an embedded closed connected symplectic submanifold of dimension $2$.
\begin{enumerate}
 \item For a section $v \colon \Sigma \to i^{\ast}\op{T}\!M$ that is in $\op{T}_{i}\mathcal{S}_{\op{e}}(\Sigma, \, \sigma)$, the form $$\alpha_{v}=\omega(v,\op{d}\!i(\cdot))$$ is a closed form on $\Sigma$.
 \item Every vector $v \in  \Gamma_{\closed}(i^{*}\op{T}\!M)\}$ can be extended to a vector field $\tilde{v}$ on a neighbourhood of $i(\Sigma)$ in $M$ such that $\mathcal{L}_{\tilde{v}}\omega=0$.
 \end{enumerate}  
    \end{lemma}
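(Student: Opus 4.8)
The plan is to treat the two assertions separately: for (1) I would extract the form of $\op{T}_i\mathcal{S}_{\op{e}}(\Sigma,\,\sigma)$ from Proposition \ref{model}, and for (2) I would split $v$ into its $\omega$-orthogonal and tangential parts and extend each piece, invoking Lemma \ref{path} for the first and the Weinstein model of \S\ref{normal} for the second. I expect the second part to be the real work.

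\textbf{Part (1).} By the chart $\phi_i$ constructed in the proof of Proposition \ref{model}, a neighbourhood of $i$ in $\mathcal{S}_{\op{e}}(\Sigma,\,\sigma)$ is parametrized by $(\xi,\,b)\in\Gamma_{\exact}(i^*\op{T}\!M)\oplus\X(\Sigma,\,\sigma)$ via $\ell(x)=\Phi_i(b(x),\,\xi(b(x)))$. I would differentiate this parametrization at $(0,\,e)$; using that $\op{d}\!\Phi_i=\op{d}\!i$ along the zero section and that the fibre of $\op{N}\!\Sigma$ over $x$ is identified with $(\op{T}_{i(x)}i(\Sigma))^\omega\subseteq\op{T}_{i(x)}M$, this shows that every $v\in\op{T}_i\mathcal{S}_{\op{e}}(\Sigma,\,\sigma)$, seen inside $\Gamma(i^*\op{T}\!M)$, has the form $v=\xi+\op{d}\!i(w)$ with $\xi\in\Gamma_{\exact}(i^*\op{T}\!M)$ and $w\in\X(\Sigma,\,\sigma)$. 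Then $\alpha_v=\alpha_\xi+\alpha_{\op{d}\!i(w)}$, where $\alpha_\xi$ is exact by definition of $\Gamma_{\exact}$, and $\alpha_{\op{d}\!i(w)}(\cdot)=\omega(\op{d}\!i(w),\,\op{d}\!i(\cdot))=\sigma(w,\,\cdot)=\iota_w\sigma$ is closed since $\mathcal{L}_w\sigma=0$ and $\sigma$ is closed (Cartan). Hence $\alpha_v$ is closed. The conceptual content is simply the first variation of the constraint $i_t^*\omega=\sigma$ along a curve $i_t$ through $i$ in $\mathcal{S}_{\op{e}}(\Sigma,\,\sigma)$: extending $v$ to a vector field $V$ near $i(\Sigma)$ with flow $\Psi_t$ (possible since $i$ is an embedding) and using the smoothness of $g\mapsto g^*\omega$, one gets $0=\frac{\op{d}}{\op{d}t}|_{t=0}\,i_t^*\omega=\frac{\op{d}}{\op{d}t}|_{t=0}\,(\Psi_t\circ i)^*\omega=i^*\mathcal{L}_V\omega=\op{d}(i^*\iota_V\omega)=\op{d}\alpha_v$.

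\textbf{Part (2).} I would decompose $v=\xi_v+\tau_v$ as in \eqref{decomp}. By Lemma \ref{path}, $\xi_v=i^*V_H$ for a Hamiltonian vector field $V_H$ on $M$, and $\mathcal{L}_{V_H}\omega=0$ on all of $M$; so the issue is to extend the tangential part $\tau_v$. Since $v\in\Gamma_{\closed}(i^*\op{T}\!M)$, Lemma \ref{identifyx} gives $w:=(\op{d}\!i)^{-1}\tau_v\in\X(\Sigma,\,\sigma)$, so $\beta:=\iota_w\sigma$ is a closed $1$-form on $\Sigma$. Next I would invoke the Weinstein tubular neighbourhood $\Phi_i\colon(U,\,\omega_{\op{N}\!\Sigma})\to(M,\,\omega)$ of \S\ref{normal}; after shrinking $U$, the closed form $\omega_{\op{N}\!\Sigma}$ is non-degenerate on $U$, hence symplectic there. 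With $\pi\colon\op{N}\!\Sigma\to\Sigma$ the projection, $\pi^*\beta$ is a closed $1$-form on $U$, and I would define $\tilde w$ on $U$ by $\iota_{\tilde w}\omega_{\op{N}\!\Sigma}=\pi^*\beta$; then $\mathcal{L}_{\tilde w}\omega_{\op{N}\!\Sigma}=\op{d}\iota_{\tilde w}\omega_{\op{N}\!\Sigma}=\op{d}\pi^*\beta=0$.

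The step that needs care — and the main obstacle — is checking that $\tilde w$ restricts to $w$ along the zero section. At a point $x$ of the zero section, properties (1)--(3) of the minimal coupling form say that $\op{T}_x\op{N}\!\Sigma$ is the $\omega_{\op{N}\!\Sigma}$-orthogonal direct sum of $\op{T}_x\Sigma$ and the fibre $\op{N}\!_x\Sigma$, that $\omega_{\op{N}\!\Sigma}$ restricts to $\sigma$ on the former and to the fibrewise symplectic form on the latter, while $\pi^*\beta$ equals $\sigma(w,\,\cdot)$ on $\op{T}_x\Sigma$ and $0$ on $\op{N}\!_x\Sigma$; non-degeneracy of $\sigma$ and of the fibrewise form then force $\tilde w(x)=w(x)$. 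Consequently $\hat\tau:=(\Phi_i)_*\tilde w$ is a vector field on a neighbourhood of $i(\Sigma)$ in $M$ with $\mathcal{L}_{\hat\tau}\omega=0$ (because $\Phi_i^*\omega=\omega_{\op{N}\!\Sigma}$) and $\hat\tau|_{i(\Sigma)}=\op{d}\!\Phi_i(w)=\op{d}\!i(w)=\tau_v$ (because $\op{d}\!\Phi_i=\op{d}\!i$ along $\Sigma$). Setting $\tilde v:=V_H+\hat\tau$ on the neighbourhood of $i(\Sigma)$ where $\hat\tau$ is defined, we get $\mathcal{L}_{\tilde v}\omega=0$ and $\tilde v|_{i(\Sigma)}=\xi_v+\tau_v=v$, the required extension. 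The heart of the matter is producing a \emph{symplectic}, not merely Hamiltonian, extension of the tangential component via the minimal coupling form and the closed $1$-form $\pi^*\beta$; everything else is bookkeeping with the Weinstein model.
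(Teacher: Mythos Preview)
Your proposal is correct. For Part~(2) it is essentially the paper's argument: decompose $v=\xi_v+\tau_v$, extend the normal piece via Lemma~\ref{path}, and for the tangential piece pull back the closed $1$-form $\iota_w\sigma$ along the bundle projection in the Weinstein model of \S\ref{normal}, then use non-degeneracy of $\omega_{\op{N}\!\Sigma}$ (resp.\ $\omega$) to produce the vector field. Your explicit check that $\tilde w$ restricts to $w$ along the zero section, via the $\omega_{\op{N}\!\Sigma}$-orthogonal splitting $\op{T}_x\Sigma\oplus\op{N}\!_x\Sigma$, is exactly the verification the paper compresses into the phrase ``consistent with $\iota_{\tau_v}\omega$ on vectors tangent to $i(\Sigma)$''.

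For Part~(1) the two routes differ. The paper argues directly by first variation: for a curve $\phi_t$ in $\mathcal{S}_{\op{e}}(\Sigma,\sigma)$ through $i$ with velocity $v$, the identity $\phi_t^*\omega=\sigma$ gives $\mathcal{L}_v\omega|_{\op{d}\!i(\op{T}\Sigma)}=0$, hence $\op{d}\alpha_v=0$ by Cartan. This is your parenthetical ``conceptual content'' remark. Your primary argument instead reads the tangent space off the chart of Proposition~\ref{model}: differentiating $\ell(x)=\Phi_i(b(x),\xi(b(x)))$ at $(0,e)$ yields $v=\xi+\op{d}\!i(w)$ with $\xi\in\Gamma_{\exact}(i^*\op{T}\!M)$ and $w\in\X(\Sigma,\sigma)$, whence $\alpha_v=\alpha_\xi+\iota_w\sigma$ is (exact)$+$(closed). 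This is a legitimate alternative and there is no circularity (the proof of Proposition~\ref{model} does not use Lemma~\ref{vtangent}). The trade-off: your chart argument makes Part~(1) a corollary of the modeling space and hence ties it to Proposition~\ref{model}, while the paper's variation argument is self-contained and would survive even without the chart construction.
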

    
    \begin{proof}
    \begin{enumerate}
    \item For a vector field  $v$, let $t \to \phi_t$ be the integral curve of $v$ starting at $i$. Since $v$ is tangent to $\mathcal{S}_{\op{e}}(\Sigma, \, \sigma)$, for $w_1, \, w_2$ in $\op{T}\!\Sigma$,
        we get ${\phi_t}^{*}\omega(\op{d}\!iw_1,\op{d}\!iw_2)=\sigma(w_1,w_2)=\omega(\op{d}\!iw_1,\op{d}\!iw_2)$, where $\op{d}\!i \colon \op{T}\Sigma \to \op{T}(i (\Sigma))$.
    Hence  on $i(\Sigma)$, $$\mathcal{L}_{v}{\omega}=\lim_{t \to 0} \frac{{\phi_t}^{*}\omega-\omega}{t}=0.$$
    By Cartan's formula, this implies $\op{d}\!\iota_{v}\omega=0$ as a form on $i(\Sigma)$, i.e., $\alpha_v$ is a closed form on $\Sigma$.
  
    \item By Cartan's formula and the fact that $\omega$ is a closed form, we need to extend $v$ to $\tilde{v}$ on a neighbourhood of $i(\Sigma)$ in $M$ such that the $1$-form $\omega(\tilde{v},\cdot)$ is a closed form.
    By the decomposition \eqref{decomp} and Lemma \ref{path}, it is enough to extend $\tau_v \in \Gamma_{\closed}(i^{*}\op{T}\!M)$ that is everywhere tangent to $\Sigma$ to such a vector.
  The closed form $\iota_{(\op{d}\!i)^{-1}\tau_v}\sigma$ on the zero section of $(\op{N}\! \Sigma, \, \omega_{\op{N}\!\Sigma})$ pulls back (through the projection of  $\op{N}\!\Sigma$ to the zero section) to a
closed one-form on $\op{N}\!\Sigma$ that is consistent with $\iota_{(\op{d}\!i)^{-1}\tau_v}\sigma$ on the zero section and zero on directions $\omega_{\op{N}\!\Sigma}$-orthogonal to the zero section. 
  By Weinstein's symplectic tubular neighbourhood theorem, the push forward of this form via the symplectic embedding
$\Phi_i \colon (U, \, \omega_{\op{N}\!\Sigma}) \to (M, \, \omega)$ of \eqref{eqnormal} 
is a closed one-form 
$\widetilde{\alpha}_{\tau_v}$  on a neighbourhood of $i(\Sigma)$ in $M$ that is consistent with $\iota_{\tau_v}\omega$ on vectors tangent to $i(\Sigma)$.  Define $\widetilde{\tau_v}$ to be the vector field such that $\widetilde{\alpha}_{\tau_v}(\cdot)=\omega(\widetilde{\tau_v},\cdot)$. The vector $\widetilde{\tau_v}$ is well defined since $\omega$ is non-degenerate.
    
    \end{enumerate}
     \end{proof}

Denote by $\omega_{\mathcal{S}_{\op{e}}(\Sigma, \, \sigma)}$ the pullback (through inclusion) of $\omega_{\mathcal{S}_{\op{i}}(\Sigma)}$ to $\mathcal{S}_{\op{e}}(\Sigma, \, \sigma)$, 
and by ${{\omega^{D}}}_{\mathcal{S}_{\op{e}}(\Sigma, \, \sigma)}$ the pullback of $ {\omega^{D}}_{(\Sigma, \, \sigma)}$  to $\mathcal{S}_{\op{e}}(\Sigma, \, \sigma)$.

\begin{proposition} \label{sympl3}
The $2$\--form  ${{\omega^{D}}}_{\mathcal{S}_{\op{e}}(\Sigma, \, \sigma)}$ on $\mathcal{S}_{\op{e}}(\Sigma, \, \sigma)$ is closed and weakly non-degenerate. 
\end{proposition}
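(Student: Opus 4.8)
The plan is to get closedness from the already-established closedness of ${\omega^{D}}_{(\Sigma, \, \sigma)}$ by a pullback argument, and to get weak non-degeneracy by a case split governed by the description of $\op{T}_{i}\mathcal{S}_{\op{e}}(\Sigma, \, \sigma)$ in Proposition \ref{model} and Corollary \ref{split}.

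For closedness: by definition ${{\omega^{D}}}_{\mathcal{S}_{\op{e}}(\Sigma, \, \sigma)}$ is the pullback of ${\omega^{D}}_{(\Sigma, \, \sigma)}$ along the inclusion $\mathcal{S}_{\op{e}}(\Sigma, \, \sigma) \hookrightarrow \op{C}^{\infty}(\Sigma,\, M)$, and this inclusion is smooth: in the charts $\phi_i$ of Proposition \ref{model} it is the composition of the smooth Weinstein embeddings $\Phi_i$ with evaluation. Since the exterior derivative commutes with pullback along smooth maps and ${\omega^{D}}_{(\Sigma, \, \sigma)}$ is closed by Proposition \ref{thclonon2}, ${{\omega^{D}}}_{\mathcal{S}_{\op{e}}(\Sigma, \, \sigma)}$ is closed.

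For weak non-degeneracy, fix $i \in \mathcal{S}_{\op{e}}(\Sigma, \, \sigma)$ and $0 \neq v \in \op{T}_{i}\mathcal{S}_{\op{e}}(\Sigma, \, \sigma)$. By Lemma \ref{vtangent}(1), Proposition \ref{model} and Corollary \ref{split} we have $\op{T}_{i}\mathcal{S}_{\op{e}}(\Sigma, \, \sigma) = \Gamma_{\closed}(i^{*}\op{T}\!M) = \Gamma_{\exact}(i^{*}\op{T}\!M) \oplus \X(\Sigma, \, \sigma)$, and I write $v = \xi_v + \tau_v$ as in \eqref{decomp}. Since $\op{T}_{i(x)}i(\Sigma)$ is a symplectic subspace of $(\op{T}_{i(x)}M, \omega)$, we have $\op{T}_{i(x)}i(\Sigma) \cap (\op{T}_{i(x)}i(\Sigma))^{\omega} = 0$, so $v$ is everywhere $\omega$-orthogonal to $\Sigma$ exactly when $\tau_v = 0$. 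If $\tau_v \neq 0$, then $v \in \Gamma_{\closed}(i^{*}\op{T}\!M)$ fails condition (4) of Lemma \ref{vanish}, hence also the equivalent condition (3): there is a $w$ everywhere tangent to $\Sigma$ with $\mathcal{L}_{(\op{d}\!i)^{-1}w}\sigma = 0$ — so $w \in \op{d}\!i(\X(\Sigma, \, \sigma)) \subseteq \op{T}_{i}\mathcal{S}_{\op{e}}(\Sigma, \, \sigma)$ — and $({\omega^{D}}_{(\Sigma, \, \sigma)})_i(v, w) \neq 0$. If instead $\tau_v = 0$, so $v = \xi_v \neq 0$, I pick $x_0$ with $v(x_0) \neq 0$, use that $(\op{T}_{i(x_0)}i(\Sigma))^{\omega}$ is a symplectic subspace containing $v(x_0)$ to find $u_0$ there with $\omega(v(x_0), u_0) > 0$, and extend $u_0$ by a bump-function construction — in a symplectic local trivialization of the symplectic normal bundle $\op{N}\!\Sigma = (\op{T}\!i(\Sigma))^{\omega} \subseteq \op{T}\!M|_{i(\Sigma)}$ of \S\ref{normal} — to a smooth section $w$ supported near $x_0$ with $\omega(v, w) \geq 0$ on $\Sigma$ and $\omega(v, w) > 0$ near $x_0$. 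Such a $w$ is everywhere $\omega$-orthogonal to $\Sigma$, so $\alpha_w = 0$, hence $w \in \Gamma_{\closed}(i^{*}\op{T}\!M) = \op{T}_{i}\mathcal{S}_{\op{e}}(\Sigma, \, \sigma)$, and $({\omega^{D}}_{(\Sigma, \, \sigma)})_i(v, w) = \int_{\Sigma} \omega(v, w)\,\sigma > 0$.

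The step I expect to be the crux is handling the tangential component $\tau_v$: the naive pointwise construction used for $\xi_v$ is unavailable there, because the test vector $w$ must lie in $\op{T}_{i}\mathcal{S}_{\op{e}}(\Sigma, \, \sigma)$, i.e.\ its tangential part must come from $\X(\Sigma, \, \sigma)$ rather than being an arbitrary localized tangential section; Lemma \ref{vanish} is precisely the ingredient that produces such a $w$, and is where the global geometry of $(\Sigma,\sigma)$ enters. The remaining points — smoothness of the inclusion and the bump-function extension of $u_0$ — are routine.
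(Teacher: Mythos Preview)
Your proof is correct and runs parallel to the paper's: both deduce closedness by restricting the closed form ${\omega^{D}}_{(\Sigma,\sigma)}$ from $\op{C}^{\infty}(\Sigma,M)$, and both establish weak non-degeneracy via a two-case argument, one branch of which builds an $\omega$-orthogonal test section near a point by a bump-function construction (your case $\tau_v=0$, the paper's item~(2)). The difference is the direction of the case split. The paper splits on whether $v$ is everywhere tangent to $\Sigma$ (i.e.\ $\xi_v=0$): in the purely tangential case it argues directly, and in the general case it pairs $v$ with a normal $w_1=i^{*}W_H$ and then invokes $1\Rightarrow 3$ of Lemma~\ref{vanish} to kill the cross term $({\omega^{D}})_i(\tau_v,w_1)$. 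You instead split on whether $\tau_v=0$: when $\tau_v\neq 0$ you use the equivalence $(3)\Leftrightarrow(4)$ of Lemma~\ref{vanish} as a black box to produce a \emph{tangential} test vector in $\op{d}\!i\bigl(\X(\Sigma,\sigma)\bigr)\subset \op{T}_i\mathcal{S}_{\op{e}}(\Sigma,\sigma)$, and only in the purely normal case do you build the bump section, with no cross term to dispose of. Your organisation thus packages all the tangential analysis inside Lemma~\ref{vanish}, which shortens the non-degeneracy argument; the paper's organisation makes the Hamiltonian test vector $w_1$ explicit in the general (not-everywhere-tangent) case, which is exactly what is recycled later as item~(1) of Lemma~\ref{tangent}.
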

\begin{proof}
The form is closed as the restriction of the closed form $ {\omega^{D}}_{(\Sigma, \, \sigma)}$
(see proposition \ref{thclonon2}) to the manifold $\mathcal{S}_{\op{e}}(\Sigma, \, \sigma)$ (see proposition \ref{model}). We need to show that it is weakly non-degenerate.
\begin{enumerate}
\item For $0\neq \tau_v \in \op{T}_{i}\mathcal{S}_{\op{e}}(\Sigma, \, \sigma)$ that is everywhere tangent to $\Sigma$, 
$$\omega^D(\tau_v,\tau_v)=\int_{\Sigma}\omega(\tau_v,\tau_v)\sigma=\int_{\Sigma}\sigma(\op{d}\!i^{-1}\tau_v,\op{d}\!i^{-1}\tau_v)\sigma \neq 0.$$
The last inequality is since $\Sigma$ is connected, $\sigma$ is a volume form, $\tau_v \neq 0$ and $\op{d}\!i \colon \op{T}\!\Sigma \to \op{T}\!i(\Sigma)$ is an isomorphism.

\item Suppose that $w \in \op{T}_{i}({\mathcal{S}_{\op{e}}(\Sigma, \, \sigma)})$ is not tangent to $\Sigma$ at $x \in \Sigma$.
By Lemma \ref{path}, $w=\xi_w+\tau_w$, where $\xi_w=i^{*}\xi$ with $\xi$ a Hamiltonian vector field on $M$ and  $\tau_w$ everywhere tangent to $\Sigma$.
In particular, $\xi_{w}(x) \neq 0$.
Let $w_1$ be a vector in $(i^{\ast} (\op{T}\! M))_x$ such that 
 $$\omega(\xi_{w} (x),\,w_1) > 0,$$ 
 and $w_1$ is symplectically orthogonal to $\op{d}\!i_x ( \op{T}_x \Sigma )$. (For example, $w_1=J \xi_{w}(x)$ for an almost complex structure $J$ that is $\omega$-compatible. See  part (2) of Claim \ref{jclaim} and Remark \ref{extend}.) 
  
   Now extend $w_1$ to a section $w_1   
   \colon \Sigma \rightarrow i^{\ast}(\op{T}\! M)$ in  $\op{T}_{i} {\mathcal{S}_{\op{e}}(\Sigma, \, \sigma)}$
  such that $\omega( \xi_{w}( y), \, w_1(y) ) > 0$ and $w_1(y)$ is symplectically orthogonal to $\op{d}\!i_y ( \op{T}_y \Sigma )$ for $y$ in a small
  neighborhood of $x$, and vanishing outside it.  By Lemma \ref{path}, $w_1=i^{*}W_H$ with $W_H$ a Hamiltonian vector field on $M$. By Lemma \ref{vtangent}, $w \in \Gamma_{\closed}(i^{*}\op{T}\!M)$, hence (see Lemma \ref{identifyx}), $\tau_w$ is an everywhere tangent to $\Sigma$ vector that satisfies  $\mathcal{L}_{{\op{d}\!i}^{-1}\tau_w}{\sigma}=0$.
     Then
   $$ { ({{\omega^{D}}}_{\mathcal{S}_{\op{e}}(\Sigma, \, \sigma)})}_{i}(w,w_1)={ ({{\omega^{D}}}_{\mathcal{S}_{\op{e}}(\Sigma, \, \sigma)})}_{i}(\xi_w,w_1)=
  \int_{\Sigma} \omega(\xi_{w},w_1) \sigma \neq 0,$$
  where the first equality follows from $1 \Rightarrow 3$ in Lemma \ref{vanish} and the last inequality follows from the choice of $w_1$. 

\end{enumerate}

\end{proof}

 \section{The quotient of  $\mathcal{S}_{\op{e}}(\Sigma, \, \sigma)$ by symplectic reparametrizations} \label{sec3}

\subsection*{The forms $\omega_{\mathcal{S}_{\op{e}}(\Sigma, \, \sigma)}$ and   ${{\omega^{D}}}_{\mathcal{S}_{\op{e}}(\Sigma, \, \sigma)}$   coincide in exact directions}

\begin{claim}
For tangent vectors $v_1, \, v_2 \in \op{T}_{f}\mathcal{S}_{\op{e}}(\Sigma, \, \sigma)$, the integrand $\iota_{( (v_1,0) \wedge (v_2,0) )} \op{ev}^{\ast} ( \omega \wedge \omega )$ equals 
  $$
 2 \, \omega(v_1,\, v_2)\, \omega(\op{d}\!f(\cdot),\, \op{d}\!f(\cdot)) + 2 \omega (v_1, \, \op{d}\!f(\cdot)) \wedge \omega(v_2, \, \op{d}\!f(\cdot))= 2 \, \omega(v_1,\, v_2)\, f^{*}\omega(\cdot, \, \cdot) + 2 \omega (v_1, \, \op{d}\!f(\cdot)) \wedge \omega(v_2, \, \op{d}\!f(\cdot)),
  $$
which, since $f \in \mathcal{S}_{\op{e}}(\Sigma, \, \sigma)$, equals 
\begin{equation} \label{omom}
2 \, \omega(v_1,\, v_2)\, \sigma(\cdot, \, \cdot) + 2 \omega (v_1, \, \op{d}\!f(\cdot)) \wedge \omega(v_2, \, \op{d}\!f(\cdot)).
\end{equation}
\end{claim}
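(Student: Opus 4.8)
The plan is to compute the four-form $\op{ev}^{\ast}(\omega\wedge\omega)$ pointwise and then contract with the given lifts. First I would record the differential of the evaluation map: at a point $(f,x)\in\op{C}^{\infty}(\Sigma,M)\times\Sigma$ and for $(\tau,\xi)\in\op{T}_{f}\op{C}^{\infty}(\Sigma,M)\times\op{T}_{x}\Sigma$ one has $\op{d}\!\op{ev}_{(f,x)}(\tau,\xi)=\tau(x)+\op{d}\!f_{x}(\xi)$. In particular the lift $(v_i,0)$ of $v_i$ is sent to $v_i(x)$, while a vector $(0,\xi)$ tangent to the slice $\{f\}\times\Sigma$ is sent to $\op{d}\!f_{x}(\xi)$. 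Since both $(v_1,0)$ and $(v_2,0)$ have vanishing $\Sigma$-component, the two-form $\iota_{((v_1,0)\wedge(v_2,0))}\op{ev}^{\ast}(\omega\wedge\omega)$ is determined by its restriction to $\{f\}\times\Sigma$, so it suffices to evaluate it on pairs $(0,\xi_1),(0,\xi_2)$ with $\xi_1,\xi_2\in\op{T}_{x}\Sigma$; this value is $(\omega\wedge\omega)_{f(x)}\big(v_1(x),v_2(x),\op{d}\!f_{x}\xi_1,\op{d}\!f_{x}\xi_2\big)$.

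The next step is to expand this expression. Writing $\op{ev}^{\ast}(\omega\wedge\omega)=(\op{ev}^{\ast}\omega)\wedge(\op{ev}^{\ast}\omega)$ and applying the Leibniz rule for the interior product with the bivector $(v_1,0)\wedge(v_2,0)$, the contracted form splits, up to the sign and the combinatorial factor $2$ that one reads off from the definitions, into the scalar $(\op{ev}^{\ast}\omega)\big((v_1,0),(v_2,0)\big)$ times $\op{ev}^{\ast}\omega$, plus the exterior product of the two one-forms $\iota_{(v_i,0)}\op{ev}^{\ast}\omega$. Restricting to $\{f\}\times\Sigma$: the scalar is $\omega_{f(x)}(v_1(x),v_2(x))$; the two-form $\op{ev}^{\ast}\omega$ restricts to $\omega(\op{d}\!f(\cdot),\op{d}\!f(\cdot))=f^{\ast}\omega$; and each one-form $\iota_{(v_i,0)}\op{ev}^{\ast}\omega$ restricts to $\omega(v_i,\op{d}\!f(\cdot))$, which is precisely the one-form $\alpha_{v_i}$ of the Notation. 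Collecting the constants yields $2\,\omega(v_1,v_2)\,\omega(\op{d}\!f(\cdot),\op{d}\!f(\cdot))+2\,\omega(v_1,\op{d}\!f(\cdot))\wedge\omega(v_2,\op{d}\!f(\cdot))$. Equivalently, one can reach the same formula by expanding $(\omega\wedge\omega)_{f(x)}$ on the four vectors above directly into its three shuffle terms and grouping the two ``mixed'' terms (those involving one $v_i$ and one $\op{d}\!f\xi_j$) into the wedge $\alpha_{v_1}\wedge\alpha_{v_2}$.

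Finally, since $f\in\mathcal{S}_{\op{e}}(\Sigma,\sigma)$ we have $f^{\ast}\omega=\sigma$, which replaces $\omega(\op{d}\!f(\cdot),\op{d}\!f(\cdot))$ by $\sigma$ and gives \eqref{omom}. The only genuine work here is bookkeeping: tracking the sign conventions and the combinatorial factor $2$ coming from the two successive interior-product (equivalently, shuffle) contractions, and observing that the result is independent of the choice of lifts — which in this case is immediate, since the lifts used have no component along $\Sigma$ and only the restriction of the contracted form to $\{f\}\times\Sigma$ enters.
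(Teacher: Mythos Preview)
Your proposal is correct and is precisely the direct computation the paper has in mind; in fact the paper states this claim without proof, and your argument---using the formula $\op{d}\!\op{ev}_{(f,x)}(\tau,\xi)=\tau(x)+\op{d}\!f_{x}(\xi)$ (which is the paper's Lemma~\ref{trivial:lem}) and expanding $(\omega\wedge\omega)$ on the four vectors $v_1,v_2,\op{d}\!f\xi_1,\op{d}\!f\xi_2$---is exactly the computation that justifies it, paralleling the analogous expansion the paper does write out in the proof of Claim~\ref{clpi}.
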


\begin{lemma} \label{exact}
For $u \in  \op{T}_{f}\mathcal{S}_{\op{e}}(\Sigma, \, \sigma)$ such that $\omega(u,\op{d}\!f(\cdot))$ on $\Sigma$ is exact, we get 
that 
$$\iota_{u}(\omega_{\mathcal{S}_{\op{e}}(\Sigma, \, \sigma)})_f = 2 \, \int_{\Sigma} \omega(u,\cdot) \, \sigma = 2 \iota_{u} ({{\omega^{D}}}_{\mathcal{S}_{\op{e}}(\Sigma, \, \sigma)})_f,$$
i.e., the two forms coincide in exact directions, up to multiplication by a constant. 
\end{lemma}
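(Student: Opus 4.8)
The plan is to reduce the claim to the vanishing of a single integral, using the pointwise formula established in the preceding Claim. Fix $f \in \mathcal{S}_{\op{e}}(\Sigma, \, \sigma)$ and let $w \in \op{T}_{f}\mathcal{S}_{\op{e}}(\Sigma, \, \sigma)$ be arbitrary. Using the liftings $(u,0)$ and $(w,0)$ of $u$ and $w$, and the fact — recalled from \cite{ckp} — that $\omega_{\mathcal{S}_{\op{i}}(\Sigma)}$, hence its pullback $\omega_{\mathcal{S}_{\op{e}}(\Sigma, \, \sigma)}$, does not depend on the choice of lifting, the Claim gives
$$
(\omega_{\mathcal{S}_{\op{e}}(\Sigma, \, \sigma)})_{f}(u,w) = 2\int_{\Sigma}\omega(u,w)\,\sigma + 2\int_{\Sigma}\alpha_{u}\wedge\alpha_{w},
$$
where $\alpha_{u}=\omega(u,\op{d}\!f(\cdot))$ and $\alpha_{w}=\omega(w,\op{d}\!f(\cdot))$ are $1$-forms on $\Sigma$. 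Since the first term on the right is exactly $2\,(\omega^{D}_{\mathcal{S}_{\op{e}}(\Sigma, \, \sigma)})_{f}(u,w)$, the lemma follows once I show $\int_{\Sigma}\alpha_{u}\wedge\alpha_{w}=0$ for every such $w$.

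For this I would invoke Lemma \ref{vtangent}(1): since $w\in\op{T}_{f}\mathcal{S}_{\op{e}}(\Sigma, \, \sigma)$, the $1$-form $\alpha_{w}$ is closed on $\Sigma$; and by hypothesis $\alpha_{u}=\op{d}\!h$ for some $h\colon\Sigma\to\R$. Then $\op{d}(h\,\alpha_{w})=\op{d}\!h\wedge\alpha_{w}+h\,\op{d}\alpha_{w}=\alpha_{u}\wedge\alpha_{w}$, and Stokes' theorem on the closed surface $\Sigma$ yields $\int_{\Sigma}\alpha_{u}\wedge\alpha_{w}=\int_{\Sigma}\op{d}(h\,\alpha_{w})=0$. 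Substituting back, $(\omega_{\mathcal{S}_{\op{e}}(\Sigma, \, \sigma)})_{f}(u,w)=2\int_{\Sigma}\omega(u,w)\,\sigma$ for every $w$, which is the asserted identity of $1$-forms at $f$, the middle expression $2\int_\Sigma\omega(u,\cdot)\,\sigma$ being $2\,\iota_{u}(\omega^{D}_{\mathcal{S}_{\op{e}}(\Sigma, \, \sigma)})_{f}$ by the definition of $\omega^{D}$.

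There is no serious obstacle; the argument is essentially one integration by parts. The only points deserving care are (a) justifying that the Claim's pointwise expression really computes the value of $\omega_{\mathcal{S}_{\op{e}}(\Sigma, \, \sigma)}$, i.e.\ the lifting-independence of the push-forward, and (b) checking that $\alpha_{w}$ is closed for an \emph{arbitrary} tangent vector $w$ to $\mathcal{S}_{\op{e}}(\Sigma, \, \sigma)$, not only for $w$ in the model subspace of Proposition \ref{model} — but this is precisely Lemma \ref{vtangent}(1). Note that closedness of $\alpha_{u}$ itself is automatic from the same lemma, so exactness of $\alpha_{u}$ is the one genuine hypothesis used beyond $u\in\op{T}_{f}\mathcal{S}_{\op{e}}(\Sigma, \, \sigma)$.
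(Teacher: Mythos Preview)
Your proof is correct and follows essentially the same route as the paper's: both use the Claim's pointwise formula \eqref{omom} to isolate the cross term $\int_\Sigma \alpha_u \wedge \alpha_w$, then kill it by an integration by parts using that $\alpha_u$ is exact and $\alpha_w$ is closed (Lemma~\ref{vtangent}(1)), followed by Stokes on the closed surface $\Sigma$. Your presentation is in fact slightly cleaner, writing the primitive as $h\,\alpha_w$ rather than the paper's $\alpha_v \wedge h$.
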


\begin{proof}
Indeed, since $\omega(u, \op{d}\!f(\cdot))=\op{d}\!h$ for a function $h \colon \Sigma \to \R$, and for every $v \in  \op{T}_{f}\mathcal{S}_{\op{e}}(\Sigma, \, \sigma)$ the form $\alpha_v$ on $\Sigma$ is closed (see Lemma \ref{vtangent}), the integration of the second term in \eqref{omom} along $f(\Sigma)$ vanishes:
$$\int_{\Sigma}   \omega (v, \, \op{d}\!f(\cdot)) \wedge \omega(u, \, \op{d}\!f(\cdot))= 
\int_{\Sigma} \alpha_v \wedge \op{d}\!h =\int_{\Sigma}  \alpha_v \wedge \op{d}\!h - \op{d}\!\alpha_v \wedge h= \int_{\Sigma}\op{d}(\alpha_v \wedge h)=\int_{\partial (\Sigma)} \alpha_v \wedge h=0.$$ 
\end{proof}
Note that (in the second equality) we used the fact that $\alpha_v$ on $\Sigma$ is closed (by Lemma \ref{vtangent}), so the argument works in the space of symplectic embeddings and not for arbitrary embeddings.

\subsection*{Directions of degeneracy for the form $\omega_{\mathcal{S}_{\op{e}}(\Sigma, \, \sigma)}$}

\begin{lemma} \label{tangent}
Let $i \in \mathcal{S}_{\op{e}}(\Sigma, \, \sigma)$.
\begin{enumerate}
\item  If $w \in \op{T}_{i}\mathcal{S}_{\op{e}}(\Sigma, \, \sigma)$ is not everywhere tangent to $\Sigma$, then  
 there exists $w_1  \in \op{T}_{i}\mathcal{S}_{\op{e}}(\Sigma, \, \sigma)$ satisfying $w_1=i^{*}W_H$ with $W_H$ a Hamiltonian vector field on $M$
 such that 
 ${ ({{\omega^{D}}}_{\mathcal{S}_{\op{e}}(\Sigma, \, \sigma)})}_i(w,w_1) \neq 0$.
\item
If $\tau \in \op{T}_{i}\mathcal{S}_{\op{e}}(\Sigma, \, \sigma)$ is everywhere tangent to $\Sigma$, then  
$\iota_{\tau} \,   (\omega_{\mathcal{S}_{\op{e}}(\Sigma, \, \sigma)})_i =0$.
\item If $w \in \op{T}_{i}\mathcal{S}_{\op{e}}(\Sigma, \, \sigma)$ is not everywhere tangent to $\Sigma$, then  
 there exists $w_1  \in \op{T}_{i}\mathcal{S}_{\op{e}}(\Sigma, \, \sigma)$  satisfying $w_1=i^{*}W_H$ with $W_H$ a Hamiltonian vector field on $M$
 such that 
 $(\omega_{\mathcal{S}_{\op{e}}(\Sigma, \, \sigma)})_i(w,w_1) \neq 0$.
\end{enumerate}
\end{lemma}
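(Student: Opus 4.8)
The plan is to obtain all three items from material already in hand, with essentially no new computation.

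Part (1) is, in substance, part (2) in the proof of Proposition \ref{sympl3}, and I would simply repeat that argument. Given $w\in\op{T}_i\mathcal{S}_{\op{e}}(\Sigma,\,\sigma)$ not tangent to $\Sigma$ at some point $x$, I decompose $w=\xi_w+\tau_w$ as in \eqref{decomp}; by Lemma \ref{path}, $\xi_w=i^{*}\xi$ for a Hamiltonian vector field $\xi$ on $M$, and $\xi_w(x)\neq0$. I then construct, exactly as there, a field $w_1=i^{*}W_H$ (a restriction of a Hamiltonian vector field, again by Lemma \ref{path}) supported in a small neighbourhood of $x$, everywhere $\omega$-orthogonal to $\op{d}\!i(\op{T}\!\Sigma)$, and with $\omega(\xi_w,\,w_1)>0$ near $x$; since $\alpha_{w_1}=\omega(w_1,\op{d}\!i(\cdot))\equiv0$ is closed, $w_1\in\Gamma_{\closed}(i^{*}\op{T}\!M)=\op{T}_i\mathcal{S}_{\op{e}}(\Sigma,\,\sigma)$ (Lemma \ref{vtangent}(1) and Corollary \ref{split}). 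Because $w\in\Gamma_{\closed}(i^{*}\op{T}\!M)$ by Lemma \ref{vtangent}(1), Lemma \ref{identifyx} gives $(\op{d}\!i)^{-1}\tau_w\in\X(\Sigma,\,\sigma)$, so the implication $1\Rightarrow3$ of Lemma \ref{vanish} discards the $\tau_w$-contribution and
$$({\omega^{D}}_{\mathcal{S}_{\op{e}}(\Sigma,\,\sigma)})_i(w,\,w_1)=({\omega^{D}}_{\mathcal{S}_{\op{e}}(\Sigma,\,\sigma)})_i(\xi_w,\,w_1)=\int_{\Sigma}\omega(\xi_w,\,w_1)\,\sigma>0 .$$

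For part (2) I would use that $\omega_{\mathcal{S}_{\op{e}}(\Sigma,\,\sigma)}$ is the pullback, along the inclusion $\mathcal{S}_{\op{e}}(\Sigma,\,\sigma)\hookrightarrow\mathcal{S}_{\op{i}}(\Sigma)$, of $\omega_{\mathcal{S}_{\op{i}}(\Sigma)}$, and that by \cite{ckp} the contraction $\iota_{\tau}\omega_{\mathcal{S}_{\op{i}}(\Sigma)}$ vanishes at $i$ precisely when $\tau$ is everywhere tangent to $i(\Sigma)$; hence an everywhere-tangent $\tau\in\op{T}_i\mathcal{S}_{\op{e}}(\Sigma,\,\sigma)$ satisfies $\iota_{\tau}\omega_{\mathcal{S}_{\op{e}}(\Sigma,\,\sigma)}=0$. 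If a self-contained argument is preferred, one substitutes $v_1=\tau=\op{d}\!i\zeta$ into the integrand \eqref{omom} (using $i^{*}\omega=\sigma$) and uses the identity $(\iota_{\zeta}\sigma)\wedge\beta=-\beta(\zeta)\,\sigma$, valid for every $1$-form $\beta$ on the $2$-manifold $\Sigma$ because $\sigma\wedge\beta=0$; with $\beta=\omega(v_2,\op{d}\!i(\cdot))$ the two summands of \eqref{omom} combine to $0$ pointwise.

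For part (3) I would feed the $w_1=i^{*}W_H$ constructed in part (1) into Lemma \ref{exact}. Being everywhere $\omega$-orthogonal to $\Sigma$, it has $\alpha_{w_1}\equiv0$, in particular exact (or invoke $1\Rightarrow2$ in Lemma \ref{vanish}), so Lemma \ref{exact} gives $\iota_{w_1}\omega_{\mathcal{S}_{\op{e}}(\Sigma,\,\sigma)}=2\,\iota_{w_1}{\omega^{D}}_{\mathcal{S}_{\op{e}}(\Sigma,\,\sigma)}$; evaluating on $w$ yields
$$(\omega_{\mathcal{S}_{\op{e}}(\Sigma,\,\sigma)})_i(w,\,w_1)=2\,({\omega^{D}}_{\mathcal{S}_{\op{e}}(\Sigma,\,\sigma)})_i(w,\,w_1)\neq0$$
by part (1). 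The only genuinely computational step is the pointwise identity in the direct version of part (2): that is the single place where the explicit shape of ${\op{ev}}^{*}(\omega\wedge\omega)$ and the $2$-dimensionality of $\Sigma$ enter, and where I would be most careful about sign conventions for wedge products — which is exactly why I would rather route part (2) through the degeneracy statement of \cite{ckp}. Everything else is assembled from Lemmas \ref{path}, \ref{vanish}, \ref{identifyx}, \ref{vtangent} and \ref{exact}, already available; the one point worth re-checking is that the cut-off field $w_1$ in parts (1) and (3) genuinely lies in $\op{T}_i\mathcal{S}_{\op{e}}(\Sigma,\,\sigma)$ and not merely in $\op{T}_iC^{\infty}(\Sigma,\,M)$, which is immediate since $\alpha_{w_1}\equiv0$ is closed.
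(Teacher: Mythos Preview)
Your proposal is correct, and part (1) is exactly what the paper does (it simply refers back to the proof of Proposition \ref{sympl3}). Parts (2) and (3), however, take different routes from the paper, and the differences are worth noting.

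For part (2), the paper does not use the explicit formula \eqref{omom} at all. Instead it works directly with the integrand $\iota_{(\tau,0)\wedge \ell_2}\op{ev}^{*}(\omega\wedge\omega)$ and invokes a pure linear-algebra fact (stated as Lemma \ref{lemma3}): for any $4$-form $\alpha$ on a vector space $W$ and any subspace $V\subset W$ of dimension $\le 2$, one has $(\iota_{v\wedge w}\alpha)|_V=0$ whenever $v\in V$. Since $\tau(x)\in\op{d}\!i_x(\op{T}_x\Sigma)$ and $\op{d}\!i_x(\op{T}_x\Sigma)$ is $2$-dimensional, the integrand vanishes pointwise without any computation. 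This is sign-free and does not depend on $i$ being symplectic, which is why the paper immediately remarks (Remark \ref{ro}) that the same argument recovers the degeneracy statement from \cite{ckp} on all of $\mathcal{S}_{\op{i}}(\Sigma)$. Your primary route---citing \cite{ckp}---is of course legitimate, but it imports exactly the result the paper is reproving here. Your alternative via \eqref{omom} is correct in spirit, but, as you suspected, it is hostage to the sign in \eqref{omom}; the paper's Lemma \ref{lemma3} argument sidesteps that entirely.

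For part (3), the paper first uses part (2) and Lemma \ref{path} to reduce to the case where $w$ itself is the restriction of a Hamiltonian field, and then applies Lemma \ref{exact} to $w$. You instead apply Lemma \ref{exact} to $w_1$ (which is already Hamiltonian by construction in part (1)) and evaluate on $w$. Your route is slightly more economical---it does not need part (2) at all---while the paper's route makes the logical dependence $(2)\Rightarrow(3)$ explicit. Both are valid.
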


\begin{proof}
\begin{enumerate}
\item This is shown in the proof of Proposition \ref{sympl3}.
\item Suppose that $\tau$ is everywhere tangent to $\Sigma$.
  Lift $\tau$ to a vector field $\ell=(\tau,0)$; let $\tau_2 \in \op{T}_i \mathcal{S}_{\op{e}}(\Sigma, \, \sigma)$ and $\ell_2$ a lifting of $\tau_2$. We show that the integrand $\iota_{\ell \wedge \ell_2} \op{ev}^{*} (\omega \wedge \omega)$ vanishes when restricted to $\op{T}(\{i\} \times \Sigma)$. 
 Indeed, for $z_1,z_2 \in \op{T}_x(\{i\} \times \Sigma),$ by definition and Lemma \ref{trivial:lem} below,
$$ \iota_{\ell \wedge \ell_2} \op{ev}^{*}(\omega \wedge \omega)_x(z_1,z_2) = \iota_{\tau \wedge \op{d} \op{ev} (\ell_2)}(\omega \wedge \omega)_{i(x)}(\op{d}\! f (z_1), \op{d}\! i(z_2)).$$ So it is enough to show that 
$$ \iota_{\tau \wedge \op{d} \op{ev} (\ell_2)}(\omega \wedge \omega) | _{\op{d}\!i(\op{T}_{x} \Sigma)}$$ vanishes. This follows from Lemma \ref{lemma3}, since, by assumption, $\tau(x) \in \op{d}\!i_{x}(\op{T}_{x}\Sigma)$ and $\, \op{d}\!i_{x}(\op{T}_{x}\Sigma) \subset \op{T}_{i(x)}M$ is a two-dimensional subspace.
\item By item (2) and Lemma  \ref{path} it is enough to prove item (3) with the assumption that $w=i^{*}V_H$ with $V_H$ a Hamiltonian vector field on $M$. By Lemma \ref{exact},  this case follows from item (1). 
\end{enumerate}
\end{proof}

\begin{remark}\label{ro}
By the same argument we get that also on ${\mathcal{S}_{\op{i}}(\Sigma)}$,
we have 
 $\omega_{\mathcal{S}_{\op{i}}(\Sigma)} (\tau, \,\cdot ) = 0$ at $f$ iff
$\tau$  is tangent to $f(\Sigma)$ at every $x \in \Sigma$. 
  See \cite[Thm 1]{ckp}.
  \end{remark}

\begin{lemma} \label{trivial:lem}
For $(\nu,\,v_{\Sigma}) \in \op{T}(\op{C}^{\infty}(\Sigma,M) \times \Sigma)$,
\begin{eqnarray}
  \op{d}(\op{ev}) _{( f, \,x )} (\nu,\,v_{\Sigma}) =  \nu_{f} (x) + \op{d}\! f_{ x } (
v_{\Sigma})  \label{De} \nonumber
\end{eqnarray}
In particular, 
\begin{equation} \label{eq1}
\op{d}(\op{ev})|_{\op{T} ( \{f\} \times \Sigma )} =\op{d} \! f,
\end{equation}
and 
\begin{equation} \label{eq2}
\op{d}(\op{ev})_{(f, \, x)}(\nu,0)=\nu_f (x).
\end{equation}
\end{lemma}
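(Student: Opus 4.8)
The plan is to reduce everything to a computation along smooth curves, using that in the Convenient Setup the differential of a smooth map $g$ at a point $p$ in a direction $v$ is recovered as $\frac{d}{dt}\big|_{t=0}g(c(t))$ for \emph{any} smooth curve $c$ through $p$ with velocity $v$. The first step is to record the two ingredients this rests on: that the evaluation map $\op{ev}\colon\op{C}^{\infty}(\Sigma,M)\times\Sigma\to M$ is itself smooth --- this is cartesian closedness / the exponential law for smooth maps in this framework, \cite{conv} --- and that $\op{T}_{(f,x)}(\op{C}^{\infty}(\Sigma,M)\times\Sigma)$ splits canonically as $\op{T}_{f}\op{C}^{\infty}(\Sigma,M)\oplus\op{T}_{x}\Sigma$, so that a tangent vector there is genuinely a pair $(\nu_f,v_{\Sigma})$, with $\nu_f\in\Omega^{0}(\Sigma,f^{*}\op{T}\!M)$ the value at $f$ of the vector field $\nu$.

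Next I would choose test curves: a smooth curve $t\mapsto f_{t}$ in $\op{C}^{\infty}(\Sigma,M)$ with $f_{0}=f$ and $\partial_{t}|_{t=0}f_{t}=\nu_f$, and a smooth curve $s\mapsto x_{s}$ in $\Sigma$ with $x_{0}=x$, $\dot{x}_{0}=v_{\Sigma}$. Then $t\mapsto(f_{t},x_{t})$ realizes $(\nu_f,v_{\Sigma})$, so $\op{d}(\op{ev})_{(f,x)}(\nu_f,v_{\Sigma})=\frac{d}{dt}\big|_{t=0}f_{t}(x_{t})$. The key move is to regard $(t,s)\mapsto f_{t}(x_{s})$ as a smooth map $\R^{2}\to M$ (it is $\op{ev}$ precomposed with $(t,s)\mapsto(f_{t},x_{s})$) and to apply the chain rule along the diagonal, splitting the derivative into $\partial_{t}|_{t=0}f_{t}(x)+\partial_{s}|_{s=0}f(x_{s})$. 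The first term equals $\nu_{f}(x)$ by the \emph{very definition} of the identification $\op{T}_{f}\op{C}^{\infty}(\Sigma,M)\cong\Omega^{0}(\Sigma,f^{*}\op{T}\!M)$: in a spray-chart $g\mapsto(x\mapsto\exp_{f(x)}^{-1}g(x))$ the velocity section of $f_t$ sends $x$ to $\partial_{t}|_{t=0}f_{t}(x)$, since $\op{d}(\exp_{f(x)})_0=\id$. The second term is the ordinary differential $\op{d}\!f_{x}(v_{\Sigma})$ of the fixed map $f$ along $x_{s}$. Together these give \eqref{De}.

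Finally the two displayed consequences drop out by specialization: \eqref{eq1} is the case $\nu_f=0$, using that $\op{T}(\{f\}\times\Sigma)$ consists exactly of the vectors $(0,v_{\Sigma})$ and is canonically $\op{T}\Sigma$, whence $\op{d}(\op{ev})|_{\op{T}(\{f\}\times\Sigma)}=\op{d}\!f$; and \eqref{eq2} is the case $v_{\Sigma}=0$. I do not expect a genuine obstacle --- the statement is the standard formula for the derivative of an evaluation map, and the author flags it as trivial --- so the only thing warranting care is the Convenient-Setup bookkeeping: confirming that $\op{ev}$ is smooth and that the chain rule for curves into a product applies verbatim. An alternative, chart-level route would trivialize $\op{C}^{\infty}(\Sigma,M)$ near $f$ as above and $M$ near $f(x)$ by $\exp_{f(x)}$, in which $\op{ev}$ becomes an expression linear in the section variable; this makes \eqref{eq2} immediate and \eqref{De} a one-line product rule, but the curve argument is cleaner and avoids fixing charts.
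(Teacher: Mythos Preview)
Your argument is correct: computing $\op{d}(\op{ev})_{(f,x)}$ along a curve $t\mapsto(f_t,x_t)$ and splitting via the chain rule on the composite $(t,s)\mapsto f_t(x_s)$ is exactly the right way to establish the formula in the Convenient Setup, and the two special cases follow by setting one component to zero. The paper itself gives no proof of this lemma at all---it is stated and immediately followed by the next lemma, with the label \texttt{trivial:lem} signalling that the author regards it as a routine verification---so your write-up in fact supplies more detail than the original.
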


\begin{lemma} \label{lemma3}
   Let $W$ be a vector space, and let $\alpha$ be a
  $4$\--form: $\alpha \colon \bigwedge^4 W \rightarrow \R.$ 
  Let $V \subset W$ be a
  subspace of dimension $\leq 2$. Then
 $\Big(\iota_{( v \wedge w )} \alpha\Big) |_V = 0$
  for all $v \in V$, $w \in W$.  
 \end{lemma}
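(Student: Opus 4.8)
The plan is to reduce the statement to the elementary linear-algebra fact that any three vectors lying in a subspace of dimension at most $2$ are linearly dependent, so their exterior product vanishes.

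First I would unwind the definition of the contraction. For $z_1,z_2\in V$, the value $\bigl(\iota_{(v\wedge w)}\alpha\bigr)(z_1,z_2)$ is, up to the fixed sign/normalization convention for interior products with a bivector, a scalar multiple of $\alpha(v,w,z_1,z_2)$. Using that $\alpha$ is alternating, $\alpha(v,w,z_1,z_2)=\pm\,\alpha(v,z_1,z_2,w)$, i.e.\ it is (a multiple of) the value of $\alpha$ on the decomposable $4$-vector $v\wedge z_1\wedge z_2\wedge w\in\bigwedge^4 W$.

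Next I would observe that the three vectors $v,z_1,z_2$ all lie in $V$, and $\dim V\le 2$, so they are linearly dependent; hence $v\wedge z_1\wedge z_2=0$ in $\bigwedge^3 W$, and therefore $v\wedge z_1\wedge z_2\wedge w=0$ in $\bigwedge^4 W$. Since $\alpha$ is linear on $\bigwedge^4 W$ it vanishes on this element, so $\bigl(\iota_{(v\wedge w)}\alpha\bigr)(z_1,z_2)=0$. As $z_1,z_2\in V$ were arbitrary, this shows $\bigl(\iota_{(v\wedge w)}\alpha\bigr)|_V=0$.

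I do not expect any genuine obstacle here; the only points requiring a little care are getting the interior-product/antisymmetrization bookkeeping right and remembering that $v$ itself must be counted among the three vectors that are forced to live in the at-most-$2$-dimensional space $V$ (so that it is really three vectors in $V$, not two, that produce the vanishing wedge).
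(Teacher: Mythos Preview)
Your argument is correct and is exactly the paper's approach: the paper's proof is the one-line observation that any three vectors in $V$ are linearly dependent, which is precisely the core fact you isolate and apply. Your write-up simply makes explicit the bookkeeping (unwinding $\iota_{(v\wedge w)}\alpha$ and noting $v,z_1,z_2\in V$) that the paper leaves implicit.
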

 \begin{proof}
This is since  any three vectors in
$V$ are linearly
  dependent.
  \end{proof}

As a result of Lemma \ref{tangent} and Lemma \ref{path}, we get an extension of Lemma \ref{exact}: 
\begin{corollary} \label{coin}
For every $u \in  \op{T}_{f}\mathcal{S}_{\op{e}}(\Sigma, \, \sigma)$, there is  $w \in  \op{T}_{f}\mathcal{S}_{\op{e}}(\Sigma, \, \sigma)$ that is everywhere tangent to $f$, such that  $\omega(u+w,\op{d}\! f(\cdot))$ on $\Sigma$ is exact and 
$$\iota_{u}(\omega_{\mathcal{S}_{\op{e}}(\Sigma, \, \sigma)})_f = \iota_{u+w}(\omega_{\mathcal{S}_{\op{e}}(\Sigma, \, \sigma)})_f=2 \, \int_{\Sigma} \omega(u+w,\cdot) \, \sigma = 2 \iota_{u+w} ({{\omega^{D}}}_{\mathcal{S}_{\op{e}}(\Sigma, \, \sigma)})_f,$$  $w$ equals zero if $\omega(u,\op{d}\!f(\cdot))$ on $\Sigma$ is exact.  
\end{corollary}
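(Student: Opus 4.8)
The plan is to reduce everything to Lemma~\ref{exact} by modifying $u$ by a vector along which $\omega_{\mathcal{S}_{\op{e}}(\Sigma, \, \sigma)}$ is degenerate, i.e.\ one that is everywhere tangent to $\Sigma$. By Lemma~\ref{vtangent}(1) the $1$-form $\alpha_u = \omega(u, \op{d}\!f(\cdot))$ on $\Sigma$ is closed. If $\alpha_u$ is already exact there is nothing to prove: take $w = 0$ and apply Lemma~\ref{exact} to $u$ itself, which is exactly the asserted chain of equalities with $u + w = u$. So from now on assume $\alpha_u$ is closed but not exact.

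Next I would build the correcting vector. Since $\sigma$ is an area form on the surface $\Sigma$, contraction $\zeta \mapsto \iota_\zeta\sigma$ is a pointwise isomorphism from vector fields on $\Sigma$ to $1$-forms on $\Sigma$; let $\zeta$ be the vector field with $\iota_\zeta\sigma = -\alpha_u$. Since $\op{d}\!(\iota_\zeta\sigma) = -\op{d}\!\alpha_u = 0$ and $\op{d}\!\sigma = 0$, Cartan's formula gives $\mathcal{L}_\zeta\sigma = 0$, so $\zeta \in \X(\Sigma, \, \sigma)$. Set $w := \op{d}\!f\,\zeta$, the vector field along $f$ everywhere tangent to $\Sigma$ determined by $\zeta$. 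Then $w \in \op{T}_f\mathcal{S}_{\op{e}}(\Sigma, \, \sigma)$: the $\X(\Sigma, \, \sigma)$-summand in the model space of Proposition~\ref{model} consists precisely of such reparametrization directions (equivalently, $t \mapsto f \circ \varphi_t$ for $\varphi_t$ the flow of $\zeta$ is a curve in $\mathcal{S}_{\op{e}}(\Sigma, \, \sigma)$, since $(f \circ \varphi_t)^{*}\omega = \varphi_t^{*}(f^{*}\omega) = \varphi_t^{*}\sigma = \sigma$, with velocity $w$ at $t = 0$). Finally, since $f$ is a symplectic embedding,
\[
\alpha_{u+w} = \alpha_u + \omega(\op{d}\!f\,\zeta,\, \op{d}\!f(\cdot)) = \alpha_u + (f^{*}\omega)(\zeta, \cdot) = \alpha_u + \iota_\zeta\sigma = 0 ,
\]
so $\omega(u+w, \op{d}\!f(\cdot))$ is exact.

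It remains to string the equalities together. Since $w$ is everywhere tangent to $\Sigma$, Lemma~\ref{tangent}(2) gives $\iota_w(\omega_{\mathcal{S}_{\op{e}}(\Sigma, \, \sigma)})_f = 0$, hence $\iota_u(\omega_{\mathcal{S}_{\op{e}}(\Sigma, \, \sigma)})_f = \iota_{u+w}(\omega_{\mathcal{S}_{\op{e}}(\Sigma, \, \sigma)})_f$ by linearity in the contracted slot. Since $u + w \in \op{T}_f\mathcal{S}_{\op{e}}(\Sigma, \, \sigma)$ and $\omega(u+w, \op{d}\!f(\cdot))$ is exact, Lemma~\ref{exact} applied to $u + w$ gives $\iota_{u+w}(\omega_{\mathcal{S}_{\op{e}}(\Sigma, \, \sigma)})_f = 2 \, \int_{\Sigma} \omega(u+w, \cdot)\, \sigma = 2\, \iota_{u+w}({{\omega^{D}}}_{\mathcal{S}_{\op{e}}(\Sigma, \, \sigma)})_f$, which is the corollary.

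The one step I expect to be a genuine obstacle is producing $w$ \emph{inside} $\op{T}_f\mathcal{S}_{\op{e}}(\Sigma, \, \sigma)$: one must know that the closed $1$-form $\alpha_u$ is matched by $-\iota_\zeta\sigma$ for an \emph{honest} tangent vector, i.e.\ for $\zeta \in \X(\Sigma, \, \sigma)$, and this is exactly where the area form on $\Sigma$ enters and why the statement fails for arbitrary embeddings, echoing the remark after Lemma~\ref{exact}. (In the authors' formulation this presumably corresponds to decomposing $u = \xi_u + \tau_u$ as in \eqref{decomp}, with Lemma~\ref{path} giving $\alpha_{\xi_u}$ exact, and choosing $w$ to cancel the nonexact part of $\alpha_{\tau_u}$; the minor rider that $w = 0$ whenever $\alpha_u$ is exact is handled by the case split above, or uniformly by taking $\iota_\zeta\sigma$ to be minus the harmonic representative of $[\alpha_u]$ for a fixed metric on $\Sigma$, which vanishes precisely in the exact case.)
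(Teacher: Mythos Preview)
Your argument is correct and follows the same skeleton the paper indicates: use Lemma~\ref{tangent}(2) to replace $u$ by $u+w$ for a suitable tangent $w$, then apply Lemma~\ref{exact}. The only difference is in how $w$ is chosen. The paper points to Lemma~\ref{path} together with the decomposition \eqref{decomp}: writing $u=\xi_u+\tau_u$, one takes $w=-\tau_u$, so that $u+w=\xi_u$ is everywhere $\omega$-orthogonal to $\Sigma$ and hence $\alpha_{u+w}$ is exact by Lemma~\ref{path} and $1\Rightarrow 2$ of Lemma~\ref{vanish}. You instead invert the area form to produce $\zeta\in\X(\Sigma,\sigma)$ with $\iota_\zeta\sigma=-\alpha_u$ and set $w=\op{d}\!f\,\zeta$, forcing $\alpha_{u+w}=0$. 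Your route avoids invoking Lemma~\ref{path} directly and is slightly more elementary; the paper's route makes the connection to Hamiltonian vector fields explicit and ties more visibly into the surrounding narrative. Either way, the finishing steps via Lemma~\ref{tangent}(2) and Lemma~\ref{exact} are identical, and your handling of the clause ``$w=0$ if $\alpha_u$ is exact'' by case-splitting is fine (note the paper's choice $w=-\tau_u$ need not vanish when $\alpha_u$ is exact, so that clause really is an existence claim requiring the case split or your harmonic-representative variant).
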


\subsection*{A symplectic form on the quotient by symplectic reparametrizations}

The group $\Diff(\Sigma,\, \sigma)$ of symplectomorphisms of $(\Sigma,\, \sigma)$ is a Lie group in the Convenient Setup, its Lie algebra is $\X(\Sigma, \, \sigma)$  \cite[43.12]{conv}.
The Lie group $\Diff(\Sigma, \, \sigma)$ acts freely on  $\mathcal{S}_{\op{e}}(\Sigma, \, \sigma)$ on the right by
\begin{equation}\label{saction}
\psi . i = i \circ \psi^{-1}.
\end{equation}
Denote the quotient map
\begin{equation} \label{qmap}
 q \colon \mathcal{S}_{\op{e}}(\Sigma, \, \sigma) \to \mathcal{S}_{\op{e}}(\Sigma, \, \sigma) / \Diff(\Sigma, \, \sigma).
 \end{equation}

Denote by $\Ham(M, \, \omega)$ the Lie group of Hamiltonian symplectomorphisms of $(M, \, \omega)$; its Lie algebra $\g$ is the space of Hamiltonian vector fields \cite[43.12, 43.13]{conv}. (A vector field $X$ on $M$ is Hamiltonian if the form $\iota_{X}\omega$ is exact; a symplectomorphism of $(M, \, \omega)$ is Hamiltonian if it is the time one flow of a time dependent Hamiltonian vector field.) 
The group $\Ham(M, \, \omega) \subset \Diff(M, \, \omega)$  acts freely on  $\mathcal{S}_{\op{e}}(\Sigma, \, \sigma)$ on the left by
$$\phi . i = \phi \circ i.$$
The action descends to the quotient $\mathcal{S}_{\op{e}}(\Sigma, \, \sigma) / \Diff(\Sigma, \, \sigma)$ as
$$
\phi.[i]=[\phi \circ i].
$$

Denote by $\Emb(\Sigma, \, M)$ the space of embeddings of $\Sigma$ into $M$. (An open set in the manifold $C^{\infty}(\Sigma,\,M)$.) The Lie group $\Dif(\Sigma)$ \cite[43.1]{conv} of reparametrizations of $\Sigma$ acts freely on  $\Emb(\Sigma, \, M)$ on the right by
$$
\psi . i = i \circ \psi^{-1}.
$$
The group $\Ham(M, \, \omega)$ acts on  $\Emb(\Sigma, \, M)$, (and its quotient by  $\Dif(\Sigma)$), by $\phi . i = \phi \circ i.$

\begin{lemma} \label{moser}
A path-connected component of the space $\mathcal{S}_{\op{e}}(\Sigma, \, \sigma)$ modulo the action of $\Diff(\Sigma, \, \sigma)$ is identified with a path-connected component of the space $\Emb(\Sigma, \, M)$ modulo the action of $\Dif(\Sigma)$ as follows. 
Let $f \colon \Sigma \to M$ be an embedding that is connected to an element $i \in \mathcal{S}_{\op{e}}(\Sigma, \, \sigma)$ through a path in $\Emb(\Sigma, \, M)$. There exists a $\Sigma$-reparametrization $\varphi_f$ such that $f \circ \varphi_f \in  \mathcal{S}_{\op{e}}(\Sigma, \, \sigma)$; the $\Sigma$-reparamertization $\varphi_f$ is unique up to symplectic $\Sigma$-reparametrizations. The map $[f] \mapsto [f \circ \varphi_f]$ from  the quotient of the path-connected component of $i \in \mathcal{S}_{\op{e}}(\Sigma, \, \sigma)$ in $\Emb(\Sigma, \, M)$ by $\Dif(\Sigma)$ to $\mathcal{S}_{\op{e}}(\Sigma, \, \sigma)/ \Diff(\Sigma, \, \sigma)$ is well defined, one-to-one and its image is the quotient of the path-connected component of $i$ in $\mathcal{S}_{\op{e}}(\Sigma, \, \sigma)$ by the action of  $\Diff(\Sigma, \, \sigma)$.

The map $f \mapsto f \circ \varphi_f$ sends a $\Ham(M,\, \omega)$-orbit  in  $\Emb(\Sigma, \, M)$ to a $\Ham(M, \, \omega)$-orbit in $\mathcal{S}_{\op{e}}(\Sigma, \, \sigma)$  modulo symplectic reparametrizations. 
\end{lemma}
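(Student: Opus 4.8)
The plan is to prove Lemma~\ref{moser} by a parametrized Moser argument. First I would note that for an embedding $f \colon \Sigma \to M$ joined to $i \in \mathcal{S}_{\op{e}}(\Sigma, \, \sigma)$ by a path $(f_t)_{t \in [0,1]}$ in $\Emb(\Sigma, \, M)$ with $f_0 = i$ and $f_1 = f$, the pullback $f^{*}\omega$ is a volume form on the closed connected oriented surface $\Sigma$ (this is implicit in the statement, as $f \circ \varphi_f$ is to pull $\omega$ back to the volume form $\sigma$) and is cohomologous to $\sigma$: by homotopy invariance of pullback on cohomology, $f^{*}[\omega] = i^{*}[\omega] = [\sigma]$ in $H^2(\Sigma; \R) \cong \R$, so in particular $\int_{\Sigma} f^{*}\omega = \int_{\Sigma}\sigma$ and the two volume forms induce the same orientation. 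Moser's classical argument then applies to $\sigma_t := (1-t)\,\sigma + t\, f^{*}\omega$, a path of positive top forms of constant total volume: writing $f^{*}\omega - \sigma = \op{d}\!\beta$, solving $\iota_{X_t}\sigma_t = -\beta$ for the vector field $X_t$ (unique by nondegeneracy of the top form $\sigma_t$), and integrating $X_t$ to an isotopy $\psi_t$ of $\Sigma$, one gets $\psi_t^{*}\sigma_t = \sigma$, so $\varphi_f := \psi_1$ satisfies $\varphi_f^{*}(f^{*}\omega) = \sigma$, i.e.\ $(f \circ \varphi_f)^{*}\omega = \sigma$ and $f \circ \varphi_f \in \mathcal{S}_{\op{e}}(\Sigma, \, \sigma)$.

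Next I would establish that $\varphi_f$ is unique up to $\Diff(\Sigma, \, \sigma)$: if $\varphi$ and $\varphi'$ both pull $f^{*}\omega$ back to $\sigma$, then $\eta := \varphi^{-1} \circ \varphi'$ satisfies $\eta^{*}\sigma = \eta^{*}\varphi^{*}(f^{*}\omega) = (\varphi')^{*}(f^{*}\omega) = \sigma$, so $\eta \in \Diff(\Sigma, \, \sigma)$ and $\varphi' = \varphi \circ \eta$. Hence $[f] \mapsto [f \circ \varphi_f] \in \mathcal{S}_{\op{e}}(\Sigma, \, \sigma)/\Diff(\Sigma, \, \sigma)$ is independent of the choice of $\varphi_f$; it is also independent of the representative of $[f]$ in $\Emb(\Sigma, \, M)/\Dif(\Sigma)$ because for $\psi \in \Dif(\Sigma)$ the diffeomorphism $\psi \circ \varphi_f$ is an admissible choice for $\varphi_{f \circ \psi^{-1}}$ and yields $(f \circ \psi^{-1}) \circ \varphi_{f \circ \psi^{-1}} = f \circ \varphi_f$. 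Injectivity is equally formal: $[f \circ \varphi_f] = [g \circ \varphi_g]$ means $f \circ \varphi_f = g \circ \varphi_g \circ \eta$ for some $\eta \in \Diff(\Sigma, \, \sigma) \subset \Dif(\Sigma)$, so $f$ and $g$ lie in one $\Dif(\Sigma)$-orbit and $[f] = [g]$.

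For the image I would argue both inclusions. Any $j$ in the path-connected component of $i$ in $\mathcal{S}_{\op{e}}(\Sigma, \, \sigma)$ is also joined to $i$ by a path in $\Emb(\Sigma, \, M)$, and since $j^{*}\omega = \sigma$ one may take $\varphi_j = \op{id}$, so $q(j) = [j]$ lies in the image; thus the image contains $q$ of that component. Conversely, running the Moser construction of the first paragraph parametrically along the connecting path $(f_t)$ — which I would arrange to lie among embeddings with $f_t^{*}\omega$ symplectic, so that the construction stays inside $\mathcal{S}_{\op{e}}(\Sigma, \, \sigma)$ — produces a smooth family $\varphi_{f_t}$ and hence a path $t \mapsto f_t \circ \varphi_{f_t}$ in $\mathcal{S}_{\op{e}}(\Sigma, \, \sigma)$ from an element of the $\Diff(\Sigma, \, \sigma)$-orbit of $i$ to $f \circ \varphi_f$, so $[f \circ \varphi_f]$ lies in $q$ of the component of $i$. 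Finally, $\Ham(M, \, \omega)$-equivariance is immediate: for $\phi \in \Ham(M, \, \omega)$ one has $(\phi \circ f)^{*}\omega = f^{*}(\phi^{*}\omega) = f^{*}\omega$, so $\varphi_{\phi \circ f} := \varphi_f$ works and $\phi \circ f \mapsto [\phi \circ f \circ \varphi_f] = \phi.[f \circ \varphi_f]$; hence the map intertwines the two $\Ham(M, \, \omega)$-actions and carries a $\Ham(M, \, \omega)$-orbit in $\Emb(\Sigma, \, M)$ onto a $\Ham(M, \, \omega)$-orbit in $\mathcal{S}_{\op{e}}(\Sigma, \, \sigma)$ modulo symplectic reparametrizations.

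The main obstacle is the parametrized Moser step carried out in the Convenient Setup: one must verify that the Moser vector fields $X_t$ and their flows depend smoothly, in the $c^{\infty}$-sense, on the parameter $t$ (and, for compatibility with the smooth structures, on $f$), which is where the analysis of the time-dependent ODE with parameters on the convenient manifold $\Dif(\Sigma)$ enters. Once smooth dependence of $t \mapsto \varphi_{f_t}$ is secured, everything else — existence, uniqueness up to $\Diff(\Sigma, \, \sigma)$, well-definedness, injectivity, description of the image, and $\Ham(M, \, \omega)$-equivariance — is formal. A secondary point to make precise is the class of connecting paths allowed, so that $f_t^{*}\omega$ remains symplectic throughout and the parametrized construction does not leave $\mathcal{S}_{\op{e}}(\Sigma, \, \sigma)$.
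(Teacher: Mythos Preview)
Your proposal is correct and rests on the same idea as the paper --- Moser's theorem --- but the paper organizes the Moser step differently, and that difference is worth noting. Rather than applying Moser to the linear interpolation $(1-t)\sigma + t\, f^{*}\omega$ for a fixed endpoint $f$, the paper applies Moser directly to the path $\omega_t := f_t^{*}\omega$ coming from the connecting path of embeddings; this produces in one stroke an isotopy $\varphi_t$ with $(f_t \circ \varphi_t)^{*}\omega = \sigma$ for all $t$, so that $t \mapsto f_t \circ \varphi_t$ is already a path in $\mathcal{S}_{\op{e}}(\Sigma,\sigma)$ from $i$ to $f \circ \varphi_f$. In your arrangement this same conclusion requires the separate ``parametrized Moser'' step you describe, whereas in the paper's arrangement the path in $\mathcal{S}_{\op{e}}(\Sigma,\sigma)$ falls out of the construction of $\varphi_f$ itself. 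The uniqueness, well-definedness, injectivity, and $\Ham(M,\omega)$-equivariance arguments are essentially identical in both versions; your equivariance argument (observing $(\phi\circ f)^{*}\omega = f^{*}\omega$ so that $\varphi_f$ serves for $\phi\circ f$) is in fact a bit cleaner than the paper's chain of pullback identities. As for the two obstacles you flag --- smooth dependence in the Convenient Setup and the requirement that $f_t^{*}\omega$ remain a volume form along the path --- the paper does not engage with the first at all, and for the second it argues from $\int_\Sigma f_t^{*}\omega = \int_\Sigma \sigma \neq 0$ that $f_t^{*}\omega$ is non-vanishing, so your caution on that point is well placed.
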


\begin{proof}
 Let $\{f_t\}_{0 \leq t \leq 1}$ be a path in $\Emb(\Sigma, \, M)$ with $f_0 =i \in \mathcal{S}_{\op{e}}(\Sigma, \, \sigma)$ and $f_1=f$. 
Set $$\omega_t={f_t}^{*}\omega.$$ By definition $\omega_t$ is closed. By the Homotopy invariance of de Rham cohomology, $[\omega_t]=[\omega_0]$ for all $t$. Therefore 
\begin{equation} \label{eqint}
\int_{\Sigma}{f_t}^{*}\omega-\int_{\Sigma}{f_0}^{*}\omega=\int_{\Sigma} \op{d}\!\eta=\int_{\partial\Sigma}\eta=0,
\end{equation}
 where the last equality is since $\Sigma$ is closed and the equality before it is by Stoke's theorem.  Since ${f_0}^{*}\omega=i^{*}\omega=\sigma$, equation \eqref{eqint} implies that for all $t$, the integral $\int_{\Sigma}{\omega_t}=\int_{\Sigma}{f_t}^{*}\omega \neq 0$ therefore, the $2$-form $\omega_t$ is a non-vanishing volume form on the $2$-dimensional manifold $\Sigma$ hence non-degenerate. We can then apply Moser's theorem \cite{moser} to get an isotopy $\varphi \colon \Sigma \times \R \to \Sigma$ such that 
 \begin{equation} \label{eqft}
 (f_t \circ \varphi_t)^{*}\omega={\varphi_t}^{*}\omega_t=\omega_0=\sigma \text{    for }0 \leq t \leq 1.
 \end{equation}
  Set $\varphi_f:=\varphi_1$.  

Notice that if $\psi_1, \, \psi_2 \in \Dif(\Sigma)$ are such that $(f \circ \psi_i)^{*}\omega=\sigma$, then $$({\psi_2^{-1}} \circ \psi_1)^{*}\sigma= ({\psi_2^{-1}} \circ \psi_1)^{*}(f \circ \psi_2)^{*}\omega=(f \circ \psi_2 \circ {\psi_2^{-1}} \circ \psi_1)^{*}\omega=(f \circ \psi_1)^{*}\omega=\sigma,$$
so $[f \circ \psi_1]=[f \circ \psi_2]$ in  $\mathcal{S}_{\op{e}}(\Sigma, \, \sigma)$ modulo $\Diff(\Sigma, \, \sigma)$. Therefore the class in  $\mathcal{S}_{\op{e}}(\Sigma, \, \sigma)$ modulo $\Diff(\Sigma, \, \sigma)$ we associated to $f$ is independent of the choice of path and isotopy. Similarly, it is independent of the choice of the representative of $[f]$: if $g \colon \Sigma \to M$ and $\alpha, \, \beta \in \Dif(\Sigma)$ are such that $f=g \circ \alpha$ and $(g \circ \beta)^{*}\omega=\sigma$, then $$(\beta^{-1} \circ \alpha \circ \varphi_f)^{*}\sigma=(\beta^{-1} \circ \alpha \circ \varphi_f)^{*}(g \circ \beta)^{*}\omega=(g \circ \beta \circ \beta^{-1} \circ \alpha \circ \varphi_f)^{*}\omega=(f \circ \varphi_f)^{*}\omega=\sigma,$$ so $[f \circ \varphi_f]=[g \circ  \beta]$. Therefore
the assignment  $[f] \mapsto [f \circ \varphi_f]$ is well defined. It is also one-to-one:
if $g$ and $f$ are not in the same class in $\Emb(\Sigma, \, M)$ modulo $\Dif(\Sigma)$, the associated $g \circ \varphi_g$ and $f \circ \varphi_f$  cannot be equal up to symplectic reparametrization of $\Sigma$. By the construction of $\varphi_f$ (see \eqref{eqft}), the map $f \mapsto f \circ \varphi_f$ sends paths starting from $i$ to paths starting from $i$ composed on a symplectic reparametrization of $(\Sigma,\, \sigma)$, hence the image of $[f] \mapsto [f \circ \varphi_f]$ is contained in the quotient of the path-connected component of $i$ in $\mathcal{S}_{\op{e}}(\Sigma, \, \sigma)$ modulo $\Diff(\Sigma, \, \sigma)$; it  is onto this quotient since for every symplectic embedding $h$ that is path-connected to $i$ in  $\mathcal{S}_{\op{e}}(\Sigma, \, \sigma)$, the class  of $h$ in  $C$ modulo  $\Dif(\Sigma)$ is sent to the class of $h$ in  $\mathcal{S}_{\op{e}}(\Sigma, \, \sigma)$ modulo $\Diff(\Sigma, \, \sigma)$.

Since the $\Ham(M,\, \omega)$-actions are from the left while the actions of $\Dif(\Sigma)$ and $\Diff(\Sigma,\sigma)$ are from the right, the map $f \mapsto f \circ \varphi_f$ sends an $\Ham(M,\, \omega)$-orbit  in  $\Emb(\Sigma, \, M)$ to an $\Ham(M, \, \omega)$-orbit in $\mathcal{S}_{\op{e}}(\Sigma, \, \sigma)$  modulo symplectic reparametrizations. For $\phi \in \Ham(M, \, \omega)$, the embedding $h = \phi \circ f$ is connected to $\phi \circ i \in  \mathcal{S}_{\op{e}}(\Sigma, \, \sigma)$ through a path in  $\Emb(\Sigma, \, M)$, hence there exists $\varphi_h \in \Dif(\Sigma)$ such that $h \circ \varphi_h \in \mathcal{S}_{\op{e}}(\Sigma, \, \sigma)$.
Since $$({\varphi_f}^{-1} \circ \varphi_h)^{*}\sigma=({\varphi_f}^{-1} \circ \varphi_h)^{*}(f \circ \varphi_f)^{*}\omega=(f \circ \varphi_h)^{*}\omega=(\phi^{-1}\circ h \circ \varphi_h)^{*}\omega=(h \circ \varphi_h)^{*}{\phi^{-1}}^{*}\omega=(h \circ \varphi_h)^{*}\omega=\sigma,$$
the symplectic embeddings $h \circ \varphi_h=(\phi \circ f) \circ \varphi_h$ and $\phi \circ (f \circ \varphi_f)$ are in the same class of  $\mathcal{S}_{\op{e}}(\Sigma, \, \sigma)$ modulo $\Diff(\Sigma, \, \sigma)$.

\end{proof}

\begin{noTitle}\label{smooth}
A neighbourhood of an element $[i]$ in $\Emb(\Sigma, \, M)$ modulo $\Dif(\Sigma)$ is identified with a neighbourhood of the zero section in the space of sections of the Normal bundle $\op{N}\!\Sigma$. The identification is as follows: choose a Riemannian metric on $M$ such that $\op{N}\! \Sigma$ is the orthogonal complement of $\op{T}\!\Sigma=\op{T}\!(i \Sigma)$ in $\op{T}\!M|_{i(\Sigma)}$; the exponential map with respect to that metric sends a neighbourhood of the zero section to a neighbourhood  of the submanifold $i(\Sigma)$. This defines an atlas on  $\Emb(\Sigma,\,M) / \Dif(\Sigma)$, all translation functions are smooth by the smoothness of  the exponential map.  
Hence, by Lemma \ref{moser}, we get a smooth structure on   $\mathcal{S}_{\op{e}}(\Sigma, \, \sigma)$ modulo $\Diff(\Sigma, \, \sigma)$.

The map $\exp$ sends an orbit of the linearized action of $\Ham(M, \, \omega)$ on $\op{N} \! {\Sigma}$ onto an orbit of $\Ham(M, \, \omega)$ in   $\Emb(\Sigma,\,M)$ modulo reparametrizations.
\end{noTitle}

\begin{corollary}\label{hampath}
A path of symplectic embeddings $(\Sigma, \, \sigma) \to (M, \, \omega)$ through $f$ can be written, up to symplectic reparametrizations of $(\Sigma, \, \sigma)$,
as $\Psi_t \circ f$, where  
$\Psi_t$
is a path in $\Ham(M, \, \omega)$. 
\end{corollary}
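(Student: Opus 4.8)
The plan is to cancel, by a time-dependent symplectic reparametrization of $\Sigma$, the component of the velocity field $\dot f_t$ tangent to $f_t(\Sigma)$; what then remains is everywhere $\omega$-orthogonal to $f_t(\Sigma)$, hence by Lemma~\ref{path} equals the restriction of a Hamiltonian vector field on $M$, whose flow will be the desired $\Psi_t$. We may assume the path is $\{f_t\}_{t\in[0,1]}$ with $f_0=f$ (reparametrize the time variable, splitting the path at $f$ if $f$ occurs in its interior).

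First I would differentiate. Since $\{f_t\}$ is a path in $\mathcal{S}_{\op{e}}(\Sigma,\, \sigma)$, each $\dot f_t\in\op{T}_{f_t}\mathcal{S}_{\op{e}}(\Sigma,\, \sigma)$, so by Lemma~\ref{vtangent}(1) we have $\dot f_t\in\Gamma_{\closed}(f_t^{*}\op{T}\!M)$. Write $\dot f_t=\xi_t+\tau_t$ as in \eqref{decomp}, with $\xi_t$ everywhere $\omega$-orthogonal to $f_t(\Sigma)$ and $\tau_t$ everywhere tangent to $f_t(\Sigma)$; this splitting, along $\op{d}\!f_t(\op{T}\!\Sigma)$ and its $\omega$-complement, depends smoothly on $t$ (cf.\ Remark~\ref{extend} and Corollary~\ref{corj}). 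By Lemma~\ref{identifyx}, $Y_t:=-(\op{d}\!f_t)^{-1}\tau_t$ is a smooth-in-$t$ time-dependent vector field on $\Sigma$ lying in $\X(\Sigma,\, \sigma)$; by Lemma~\ref{path}, $\xi_t=f_t^{*}V_{H_t}$ for a Hamiltonian vector field $V_{H_t}$ on $M$, which one arranges to depend smoothly on $t$ by running the construction in the proof of Lemma~\ref{path} — a Weinstein tubular neighbourhood of $f_t(\Sigma)$ together with a cut-off function — with $t$ as a parameter, over small subintervals of $[0,1]$ if necessary.

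Then I would integrate. Let $\psi_t\in\Diff(\Sigma,\, \sigma)$ be the isotopy with $\dot\psi_t=Y_t\circ\psi_t$ and $\psi_0=\id$; it exists for all $t$ since $\Sigma$ is closed, and it preserves $\sigma$ since $\mathcal{L}_{Y_t}\sigma=0$. Put $g_t:=f_t\circ\psi_t$, so $g_0=f$. Differentiating, and using $\dot f_t=f_t^{*}V_{H_t}+\tau_t$ together with $\op{d}\!f_t(Y_t)=-\tau_t$, the tangential terms cancel and one obtains $\dot g_t=V_{H_t}\circ g_t$. Let $\Psi_t$ be the flow of the time-dependent Hamiltonian vector field $V_{H_t}$ — a path in $\Ham(M,\, \omega)$ based at $\id$. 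Then $t\mapsto g_t(x)$ and $t\mapsto\Psi_t(f(x))$ solve the same ordinary differential equation with the same value at $t=0$, so $g_t=\Psi_t\circ f$; equivalently $f_t=\Psi_t\circ f\circ\psi_t^{-1}$ with $\psi_t^{-1}\in\Diff(\Sigma,\, \sigma)$, which is the assertion.

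The step I expect to be the main obstacle is the smooth dependence of $V_{H_t}$ (hence of $\Psi_t$) on $t$: Lemma~\ref{path} produces, for each fixed $t$, a Hamiltonian extension through non-canonical choices (a Weinstein chart for $f_t(\Sigma)$ and a cut-off), and one must check these can be organized to vary smoothly in the parameter. On the compact interval $[0,1]$ this can be handled by fixing the auxiliary data on each of finitely many subintervals and concatenating the resulting Hamiltonian isotopies, using that a composition of Hamiltonian symplectomorphisms is again Hamiltonian. The remaining steps are routine computations with flows and pullbacks.
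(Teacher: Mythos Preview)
Your proof is correct and follows essentially the same route as the paper: decompose $\dot f_t$ via \eqref{decomp}, invoke Lemma~\ref{identifyx} to identify the tangential part with a time-dependent element of $\X(\Sigma,\sigma)$, and Lemma~\ref{path} to extend the $\omega$-orthogonal part to a time-dependent Hamiltonian vector field, then integrate. The paper's proof is a two-line sketch citing exactly these lemmas together with the remark that the decomposition \eqref{decomp} is smooth in $t$; you have simply filled in the details, including the flow argument and the smoothness-in-$t$ issue for $V_{H_t}$ that the paper leaves implicit.
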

\begin{proof}
Note that for a time dependent vector field $v_t \in \op{T}_{i}\mathcal{S}_{\op{e}}(\Sigma, \, \sigma) \times \R$, the decomposition \eqref{decomp} is smooth with respect to the time parameter.
Thus the corollary follows from Lemma \ref{path} and Lemma \ref{identifyx}.
 \end{proof}


\begin{definition}\label{leedef} \cite[Definition 10]{lee}.
Let $(X, \,\Omega)$ be a weakly symplectic manifold. Let $G \curvearrowright X$ be a 
free action of a Lie group $G$ on $X$, such that 
${g}^{*} \Omega = \Omega$ for all $g \in G$ (${g} \colon X \to X$ denotes the action $x \to g.x$). The collection of subspaces 
$$\mathcal{D}_x := \{v \in \op{T}_{x} X \,|\, \Omega_{x} (v, \xi_{X} (x))=0 \, \, \forall \xi \in \mathfrak{g}\}$$ 
for $x \in X$ defines a distribution $\mathcal{D}$ on $X$ . Let $i_{N}  \colon  N \hookrightarrow X$ be a maximal integral manifold of $\mathcal{D}$ 
and let $q \colon X \to X /G$ denote the projection to the orbit space. Suppose that
the quotient induces a smooth structure on $q(N)$, in which there exists a unique weak symplectic structure $\Omega_{\red}$ on $q(N)$ such that $(q|_{N})^{*}\Omega_{\red}=i^{*}_{N}\Omega$.
Then the weakly symplectic manifold $(q(N), \,\Omega_{\red})$ will be called a \emph{reduction} or \emph{symplectic quotient} of $(X,\,\Omega)$ with respect to the $G$-action. 
\end{definition}

A \emph{distribution} on a smooth manifold $M$ assigns to each point $x \in M$ a $c^{\infty}$-closed 
subspace $\mathcal{D}_x$ of $\op{T}_{x}M$. (The $c^{\infty}$-topology on a locally convex space $E$ is the finest topology for which all smooth curves $c \colon \R \to E$ are continuous.) If $\mathcal{D} = \{\mathcal{D}_x \}$ is a distribution on a manifold $M$ and $i \colon N \hookrightarrow M$ is the 
inclusion map of a path-connected submanifold $N$ of $M$, then $N$ is called an integral manifold of $\mathcal{D}$ if 
$\op{d}_{i} (\op{T}_{x} N ) = \mathcal{D}_{i(x)}$ for all $x \in N$. An integral manifold of $\mathcal{D}$ is called maximal if it is not properly 
contained in any other integral manifold. (Note that since the local model is a locally convex vector space, a manifold is path-connected if and only if it is connected.)

\begin{noTitle}
The motivation for Definition \ref{leedef} comes from the standard reduction of a finite dimensional symplectic manifold 
$(X,\, \Omega)$ with respect to a Hamiltonian $G$-action with a moment map $\phi$. For a regular 
value $r$ of $\phi$,  the tangent space at $p$ to the level surface $\phi^{-1} (r)$ is equal to the set ${\mathcal{D}}_p$ of all 
vectors $v \in  \op{T}_{p}X$  satisfying $\Omega(v, \xi_{X}(p)) = 0$ for all $\xi \in \mathfrak{g}$. The subspaces ${\mathcal{D}}_p$ 
give a distribution $\mathcal{D}$ 
on $X$,
defined even in the absence of a moment map. If $G \curvearrowright X $ is a free symplectic action, then this distribution can be taken as the starting 
point of the Òoptimal reduction methodÓ of Ortega and Ratiu \cite{ratiu}. 
\end{noTitle}

\begin{theorem} \label{lee}
Consider   $(\mathcal{S}_{\op{e}}(\Sigma, \, \sigma), \,{{\omega^{D}}}_{\mathcal{S}_{\op{e}}(\Sigma, \, \sigma)})$ 
with the action \eqref{saction} of  $\Diff(\Sigma, \, \sigma)$.
The $\Ham(M, \, \omega)$-orbit $\mathcal{N}$ through $i \in  \mathcal{S}_{\op{e}}(\Sigma, \, \sigma)$ is a maximal  integral manifold of the distribution $\mathcal{D}$. The restriction of  ${{\omega^{D}}}_{\mathcal{S}_{\op{e}}(\Sigma, \, \sigma)}$ to $\mathcal{N}$ descends to a weak symplectic structure $\omega^D_{\red}$ on the image $\mathcal{O}:=q(\mathcal{N})$ in the orbit space under the projection $ q \colon \mathcal{S}_{\op{e}}(\Sigma, \, \sigma) \to \mathcal{S}_{\op{e}}(\Sigma, \, \sigma) / \Diff(\Sigma, \, \sigma).$ Thus the symplectic space $(\mathcal{O}, \, \omega^D_{\red})$ is a reduction of $\mathcal{S}_{\op{e}}(\Sigma, \, \sigma)$
with respect to the $\Diff(\Sigma, \, \sigma)$-action. 
\end{theorem}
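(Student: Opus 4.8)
The plan is to verify the three requirements of Definition \ref{leedef} for $X = \mathcal{S}_{\op{e}}(\Sigma, \, \sigma)$, $\Omega = {{\omega^{D}}}_{\mathcal{S}_{\op{e}}(\Sigma, \, \sigma)}$, and $G = \Diff(\Sigma, \, \sigma)$ acting by \eqref{saction}. The weak symplectic property of $\Omega$ is Proposition \ref{sympl3}, and the $G$-invariance of $\Omega$ is immediate from its integral definition together with the fact that reparametrizations preserve $\sigma$ and the integrand transforms by pullback. So the real content is: (i) identify the distribution $\mathcal{D}$; (ii) show the $\Ham(M,\,\omega)$-orbit $\mathcal{N}$ through $i$ is a maximal integral manifold of $\mathcal{D}$; (iii) show $q(N)$ carries a smooth structure and a unique weak symplectic $\omega^D_{\red}$ with $(q|_{\mathcal{N}})^*\omega^D_{\red} = i_{\mathcal{N}}^*\Omega$.

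First I would compute $\mathcal{D}$. The infinitesimal generators of the $\Diff(\Sigma, \, \sigma)$-action at $i$ are exactly the vectors $\op{d}\!i\,\xi$ for $\xi \in \X(\Sigma, \, \sigma)$, i.e. the tangent vectors everywhere tangent to $\Sigma$ whose pullback under $(\op{d}\!i)^{-1}$ preserves $\sigma$. Hence $\mathcal{D}_i = \{v \in \op{T}_i\mathcal{S}_{\op{e}}(\Sigma, \, \sigma) \mid ({{\omega^{D}}}_{\mathcal{S}_{\op{e}}(\Sigma, \, \sigma)})_i(v, w) = 0 \text{ for all } w \text{ everywhere tangent to } \Sigma \text{ with } \mathcal{L}_{(\op{d}\!i)^{-1}w}\sigma = 0\}$. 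By Lemma \ref{vtangent}(1), every $v \in \op{T}_i\mathcal{S}_{\op{e}}(\Sigma, \, \sigma)$ lies in $\Gamma_{\closed}(i^{*}\op{T}\!M)$, so the equivalence $3 \Leftrightarrow 1$ (equivalently $3 \Leftrightarrow 4$) of Lemma \ref{vanish} applies verbatim: $v \in \mathcal{D}_i$ if and only if $v = i^{*}V_H$ for a Hamiltonian vector field $V_H$ on $M$, equivalently $v$ is everywhere $\omega$-orthogonal to $\Sigma$. Thus $\mathcal{D}_i$ is precisely the space $\Gamma_{\exact}(i^{*}\op{T}\!M)$ of infinitesimal generators of the $\Ham(M,\,\omega)$-action.

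Next I would show $\mathcal{N}$ is a maximal integral manifold. The tangent space to the $\Ham(M,\,\omega)$-orbit at $i$ is $\{i^{*}V_H\}$, which by the previous paragraph equals $\mathcal{D}_i$; by homogeneity of the orbit this holds at every point, so $\mathcal{N}$ is an integral manifold. For maximality, suppose $\mathcal{N} \subsetneq N'$ for a connected integral manifold $N'$; a path in $N'$ leaving $\mathcal{N}$ is a path of symplectic embeddings whose velocity is everywhere in $\mathcal{D}$, hence (Lemma \ref{path}, and the smoothness in $t$ noted in Corollary \ref{hampath}) a path of the form $\Psi_t \circ i$ with $\Psi_t$ a path in $\Ham(M,\,\omega)$ — so it stays in $\mathcal{N}$, a contradiction. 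For (iii): by Lemma \ref{moser} and \S\ref{smooth}, $\mathcal{N}$ modulo $\Diff(\Sigma, \, \sigma)$ is identified with a $\Ham(M,\,\omega)$-orbit in $\Emb(\Sigma,\,M)/\Dif(\Sigma)$, which has a manifold structure via the exponential-map atlas, so $q(\mathcal{N})$ is a smooth manifold. Since $\Omega$ is $G$-invariant and $q|_{\mathcal{N}}$ is a surjective submersion onto $q(\mathcal{N})$ with the orbit directions of $G$ lying (by the computation of $\mathcal{D}$) in the kernel of $i_{\mathcal{N}}^*\Omega$, the form $i_{\mathcal{N}}^*\Omega$ descends to a well-defined closed $2$-form $\omega^D_{\red}$ on $q(\mathcal{N})$, uniquely characterized by $(q|_{\mathcal{N}})^*\omega^D_{\red} = i_{\mathcal{N}}^*\Omega$; its weak non-degeneracy follows because the kernel of $i_{\mathcal{N}}^*\Omega$ at each point is exactly the $G$-orbit direction (again from $\mathcal{D}_i = \{i^*V_H\}$ plus the non-degeneracy part of Proposition \ref{sympl3} applied along $\mathcal{N}$).

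I expect the main obstacle to be the maximality of $\mathcal{N}$ as an integral manifold, and more precisely the interface between the Convenient-Setup definition of integral manifold and the Moser/Weinstein machinery: one must check that an abstract connected integral submanifold of $\mathcal{D}$ through $i$ cannot be strictly larger than the $\Ham(M,\,\omega)$-orbit, which requires knowing that the orbit is itself a submanifold (via \S\ref{smooth}) and that any $\mathcal{D}$-tangent path integrates to a Hamiltonian isotopy of $M$ — the content packaged in Lemma \ref{path}, Lemma \ref{identifyx}, and Corollary \ref{hampath}. The descent and uniqueness of $\omega^D_{\red}$, and its closedness and weak non-degeneracy, are then comparatively formal given Propositions \ref{sympl3} and \ref{model} and the identification of $\mathcal{D}$.
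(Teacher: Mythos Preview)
Your proposal is correct and follows essentially the same route as the paper's proof: identify $\mathcal{D}_i$ with $\Gamma_{\exact}(i^{*}\op{T}\!M)=\{i^{*}V_H\}$ via Lemma~\ref{vtangent}(1) and Lemma~\ref{vanish}, deduce that the $\Ham(M,\omega)$-orbit is an integral manifold, argue maximality by writing a $\mathcal{D}$-tangent path as a Hamiltonian isotopy (Corollary~\ref{hampath}), invoke Lemma~\ref{moser} and \S\ref{smooth} for the smooth structure on $\mathcal{O}$, and descend the form by $G$-invariance. One small adjustment: for weak non-degeneracy of $\omega^{D}_{\red}$ the paper invokes Lemma~\ref{tangent}(1) rather than Proposition~\ref{sympl3} directly, since one needs the partner $w_1$ to be of the form $i^{*}W_H$ (i.e.\ to lie in $\op{T}_i\mathcal{N}$), and your remark that ``the $G$-orbit directions lie in the kernel of $i_{\mathcal{N}}^{*}\Omega$'' is vacuous because $\op{T}_i\mathcal{N}\cap\op{T}_i(G.i)=\{0\}$---but this does not affect the argument.
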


\begin{proof}
By Proposition \ref{model} and Proposition \ref{sympl3},   $(\mathcal{S}_{\op{e}}(\Sigma, \, \sigma), \,{{\omega^{D}}}_{\mathcal{S}_{\op{e}}(\Sigma, \, \sigma)})$  is a weakly symplectic manifold.
Let  $i \in  \mathcal{S}_{\op{e}}(\Sigma, \, \sigma)$. 
By definition, 
$$D_i=\{v \in  \op{T}_{i} {\mathcal{S}_{\op{e}}(\Sigma, \, \sigma)}\, | \, \omega^{D}_{i}(v, \xi_{ \mathcal{S}_{\op{e}}(\Sigma, \, \sigma)}(i))=0 \, \forall \xi \in \X(\Sigma, \, \sigma) \}.$$
By $1 \Leftrightarrow 3$ in Lemma  \ref{vanish}, and the fact that $\op{T}_{i} {\mathcal{S}_{\op{e}}(\Sigma, \, \sigma)} \subset \Gamma_{\closed}(i^{*}\op{T}\!M)$ (by Lemma \ref{vtangent}),
$$ \op{T}_{i}(\Ham(M, \, \omega) . i)=\{v \in  \op{T}_{i} {\mathcal{S}_{\op{e}}(\Sigma, \, \sigma)}\, | \, \omega^{D}_{i}(v, \xi)=0 \, \forall \xi \in \Gamma_{\closed}(i^{*}\op{T}\!M)\text{ that is everywhere tangent to }\Sigma\}.$$  
By Lemma \ref{identifyx},  the space of vector fields in $\Gamma_{\closed}(i^{*}\op{T}\!M)$ that are everywhere tangent to $\Sigma$ is identified with the space $\X(\Sigma, \, \sigma)$.
So $\Ham(M, \, \omega)$-orbits are integral manifolds of $\mathcal{D}$. 

To see that maximal, assume that the $\Ham(M, \, \omega)$-orbit $\mathcal{N}$ is properly contained in a (path-connected) integral manifold $\tilde{N}$. By Lemma \ref{hampath} and Corollary \ref{split}, a path in $\tilde{N}$ from an element $f \in \mathcal{N}$ to an element in $\tilde{N} \smallsetminus N$ can be written
as $\Psi_t \circ f \circ \alpha_t$, where $\Psi_t$
is a path in $\Ham(M, \, \omega)$ and $\alpha_t \colon \Sigma \to \Sigma$ are in $\Diff(\Sigma,\,\sigma)$; moreover, the generating vector field of the path $v_t$ decomposes uniquely as $\xi_{v_t} + \tau_{v_t}$, where $\xi_{v_t}$ is symplectically orthogonal to $\Sigma$ and $\tau_{v_t}$ is everywhere tangent to $\Sigma$, and $\xi_{v_t}$ is the vector field generating $\Psi_t$ and $\tau_{v_t}$ is generating $\alpha_t$. Since   $\op{d}_{i_{\tilde{N}}} (\op{T}_{x} \tilde{N} ) = \mathcal{D}_{i_{\tilde{N}}(x)}$ for all $x \in \tilde{N}$ (where $i_{\tilde{N}}$ is the inclusion map of $\tilde{N}$ into $M$), and by $3 \Leftrightarrow 4$ in Lemma \ref{vanish}, for every $v_t \in \op{d}_{i_{\tilde{N}}} (\op{T}_{x} \tilde{N} )$, the vector $v_t=\xi_{v_t}$ and $\tau_{v_t}=0$. Hence $\alpha_t=\id$ for every $t$, and we get a path in $\Ham(M, \, \omega)$ connecting an element in a $\Ham(M, \, \omega)$-orbit with an element outside the orbit: a contradiction. 
  
Consider the inclusion-quotient diagram

\xymatrix{
 \mathcal{N}  \ar[d]^{q}  \ar[r]^{i_{\mathcal{N}}} &     \mathcal{S}_{\op{e}}(\Sigma, \, \sigma)\\
\mathcal{O}} 
By Lemma \ref{moser} and \S \ref{smooth},  the quotient induces a smooth structure on $\mathcal{O}=q(\mathcal{N})$. The pullback under the inclusion $i_{\mathcal{N}}$ of the form ${{\omega^{D}}}_{\mathcal{S}_{\op{e}}(\Sigma, \, \sigma)}$ to $\mathcal{N}$ is closed, horizontal (by Lemma \ref{vanish} and Lemma \ref{identifyx}) and invariant to the action of $\Diff(\Sigma,\,\sigma)$, hence basic. Therefore it is the pullback under the quotient $q$ of a closed $2$-form $\omega^{D}_{\red}$ on $\mathcal{O}$. The reduced form is given by
$$ \omega^D_{\red}([v_1],[v_2])=\int_{\Sigma} \omega(v_1,v_2)\sigma.$$
By part (1) of Lemma \ref{tangent}, 
the form $\omega^D_{\red}$ is weakly non-degenerate. 
\end{proof}

\begin{remark} \label{remold}
Similarly we get a closed $2$-form $\omega^{\red}_{\mathcal{S}_{\op{e}}(\Sigma, \, \sigma)}$ on $\mathcal{O}$:
$$ \omega_{\mathcal{S}_{\op{e}}(\Sigma, \, \sigma)}^{\red}([v_1],[v_2])=  \omega_{\mathcal{S}_{\op{e}}(\Sigma, \, \sigma)}(v_1,v_2) ,$$
well defined and weakly non-degenerate by Lemma \ref{tangent}.
By Lemma \ref{exact}, we get $\omega^{\red}_{\mathcal{S}_{\op{e}}(\Sigma, \, \sigma)}=2\omega^D_{\red}$ on $\mathcal{O}$.
However, in that case the $\Ham(M, \, \omega)$-orbits are not integral manifolds of the distribution $\mathcal{D}$ given by 
$$D_i=\{v \in \op{T}_{i} {\mathcal{S}_{\op{e}}(\Sigma, \, \sigma)} \, | \, {\omega_{\mathcal{S}_{\op{e}}(\Sigma, \, \sigma)}}_{i}(v, \xi_{ \mathcal{S}_{\op{e}}(\Sigma, \, \sigma)}(i))=0 \, \forall \xi \in \X(\Sigma, \, \sigma) \}.$$
The reason for the difference is that for $v \in \op{T}_{i} {\mathcal{S}_{\op{e}}(\Sigma, \, \sigma)}$ the form $\iota_{v}{ {{\omega^{D}}}_{\mathcal{S}_{\op{e}}(\Sigma, \, \sigma)}}|_{\X(\Sigma, \, \sigma)}\equiv 0$ iff $v=i^{*}{V_{H}}$ for a Hamiltonian vector field $V_{H}$ on $M$ (by Lemma \ref{vanish} and Lemma \ref{identifyx}) whereas $\iota_{v} { {\omega_{\mathcal{S}_{\op{e}}(\Sigma, \, \sigma)}}}|_{{\X(\Sigma, \, \sigma)}}\equiv 0$ for every $v$ in  $\op{T}\!\mathcal{S}_{\op{e}}(\Sigma, \, \sigma)$, by part (2)  of Lemma \ref{tangent}. From the same reason, the form  $\omega_{\mathcal{S}_{\op{e}}(\Sigma, \, \sigma)}$ is degenerate.
\end{remark}

\section{Corollary to moduli spaces of $J$-holomorphic $\Sigma$-curves}\label{sec4}

For $J \in \J(M,\, \omega)$, denote by
  $$\Ham^{J}(M, \, \omega)$$ the subset of $\Ham(M, \, \omega)$ of $J$-holomorphic Hamiltonian symplectomorphisms.

  \begin{lemma} \label{osubset}
   If $f$ is an embedded $(j,J)$-holomorphic curve $f \colon \Sigma \to M$, and $g = \phi \circ f$ for $\phi \in \Ham^{J}(M, \, \omega)$ then $g \in \M_{\scriptop{e}}(A,\,\Sigma,\,J)$.
  If, in addition, 
   $f^{*}\omega=\sigma$, then   $g^{*}\omega=\sigma$.
  \end{lemma}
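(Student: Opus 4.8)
The plan is to observe that both conclusions are formal consequences of the defining properties of $\phi \in \Ham^{J}(M,\,\omega)$, namely that $\phi$ is a diffeomorphism of $M$ satisfying $\op{d}\!\phi \circ J = J \circ \op{d}\!\phi$ and $\phi^{*}\omega = \omega$. First I would record that $g = \phi \circ f$ is an embedding, being the composition of the embedding $f$ with the diffeomorphism $\phi$; in particular $g$ is a simple map, so no further genericity check is needed to place it in $\M_{\scriptop{e}}$.

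Next I would verify $(j,J)$-holomorphicity by the chain rule: $\op{d}\!g = \op{d}\!\phi \circ \op{d}\!f$, and therefore
$$
\op{d}\!g \circ j = \op{d}\!\phi \circ \op{d}\!f \circ j = \op{d}\!\phi \circ (J \circ \op{d}\!f) = (\op{d}\!\phi \circ J) \circ \op{d}\!f = (J \circ \op{d}\!\phi) \circ \op{d}\!f = J \circ \op{d}\!g,
$$
where the second equality uses that $f$ is $(j,J)$-holomorphic and the fourth uses that $\phi$ is $J$-holomorphic. Hence $g$ is an embedded $(j,J)$-holomorphic $\Sigma$-curve, and it remains only to identify its homology class.

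For the class, I would use that $\phi$, being Hamiltonian, is the time-one map of a (time-dependent) Hamiltonian isotopy, hence is smoothly isotopic to $\id_M$; consequently $\phi_{*}$ acts as the identity on $\op{H}_2(M,\,\Z)$, so $g_{*}[\Sigma] = \phi_{*}\big(f_{*}[\Sigma]\big) = f_{*}[\Sigma] = A$. This gives $g \in \M_{\scriptop{e}}(A,\,\Sigma,\,J)$. Finally, for the additional statement, since $\phi$ is a symplectomorphism,
$$
g^{*}\omega = (\phi \circ f)^{*}\omega = f^{*}(\phi^{*}\omega) = f^{*}\omega = \sigma.
$$

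I do not expect a genuine obstacle here: the only two points worth making explicit are that preservation of the homology class rests on the fact that a Hamiltonian diffeomorphism is isotopic to the identity (so that $\phi_{*} = \id$ on $\op{H}_2$), and that the "embedded" hypothesis already guarantees simplicity, so membership in $\M_{\scriptop{e}}(A,\,\Sigma,\,J)$ follows immediately once holomorphicity and the class are checked.
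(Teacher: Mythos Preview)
Your proof is correct and follows essentially the same approach as the paper: the chain-rule computation for $(j,J)$-holomorphicity and the pullback computation for $g^{*}\omega$ are identical to the paper's. You additionally verify that $g_{*}[\Sigma]=A$ via the isotopy of $\phi$ to the identity, a point the paper's proof leaves implicit.
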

  
  \begin{proof}
  If $g = \phi \circ f$ for $\phi \in \Ham^{J}(M, \, \omega)$ then $g$ is an embedding $\Sigma \to M$, and $$\op{d}\!g \circ j= \op{d} \!\phi \circ \op{d} \!f \circ j=\op{d}\!\phi \circ  J \circ \op{d}\!f=J \circ \op{d} \!\phi \circ \op{d}\!f=J \circ \op{d}\!g,$$
  i.e., $g$ is a $(j,J)$-holomorphic $\Sigma$-curve in $M$. If $f^{*}\omega=\sigma$, then
  $$g^{*}\omega=(\phi \circ f)^{*}{\omega}=f^{*}\phi^{*}\omega=f^{*}\omega=\sigma.$$
  \end{proof}

     \begin{lemma} \label{clmod1}
Consider $f_1 \colon \Sigma \to M$ and $f_2 \colon \Sigma \to M$. Assume that  ${f_i}^{*}\omega=\sigma$ for $i=1,2$.
If $\phi$ is a diffeomorphism $\Sigma \to \Sigma$ such that  $f_1 \circ \phi =f_2$,
then $\phi \in \Diff(\Sigma, \, \sigma)$.
\end{lemma}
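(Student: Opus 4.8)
The plan is to reduce the statement to the functoriality of pullback of differential forms. Since $\phi$ is already assumed to be a diffeomorphism of $\Sigma$, membership in $\Diff(\Sigma,\,\sigma)$ requires only verifying that $\phi^{*}\sigma=\sigma$; this is the single point to establish.

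First I would rewrite $\sigma$ on the source copy of $\Sigma$ using the hypothesis ${f_1}^{*}\omega=\sigma$, so that $\phi^{*}\sigma=\phi^{*}({f_1}^{*}\omega)$. Next I would apply the contravariant functoriality of the pullback operation on forms, namely $\phi^{*}\circ {f_1}^{*}=(f_1\circ\phi)^{*}$, to get $\phi^{*}({f_1}^{*}\omega)=(f_1\circ\phi)^{*}\omega$. Then I would substitute the relation $f_1\circ\phi=f_2$, obtaining $(f_1\circ\phi)^{*}\omega={f_2}^{*}\omega$, and finally use the other hypothesis ${f_2}^{*}\omega=\sigma$ to conclude $\phi^{*}\sigma=\sigma$. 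Chaining these equalities gives $\phi^{*}\sigma=\sigma$, hence $\phi\in\Diff(\Sigma,\,\sigma)$.

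I do not expect a genuine obstacle here: the argument is a formal computation with pullbacks, and everything needed — smoothness of $\phi$ and of $\phi^{-1}$, and the chain rule $(f_1\circ\phi)^{*}=\phi^{*}\circ {f_1}^{*}$ — is either part of the hypothesis or standard. The only mild care is to note that $\sigma$ appears in two roles (the fixed symplectic form that $f_1$ pulls back $\omega$ to, and the fixed symplectic form that $f_2$ pulls back $\omega$ to, these being the same form on $\Sigma$), so that the chain of equalities is between forms on the same manifold and the conclusion $\phi^{*}\sigma=\sigma$ makes sense.
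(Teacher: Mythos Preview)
Your proposal is correct and is essentially the same as the paper's proof: the paper computes $\phi^{*}\sigma=\phi^{*}f_1^{*}\omega=(f_1\circ\phi)^{*}\omega=f_2^{*}\omega=\sigma$, written out pointwise on pairs of tangent vectors, which is exactly your chain of equalities.
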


\begin{proof}
\begin{eqnarray*}
\sigma(\op{d}\!\phi(\cdot),\op{d}\!\phi(\cdot))&=&{f_1}^{*}\omega(\op{d}\!\phi(\cdot),\op{d}\!\phi(\cdot))=\omega(\op{d}\!{f_1}\op{d}\!\phi(\cdot),\op{d}\!{f_1}\op{d}\!\phi(\cdot))\\
                                                                         &=&\omega(\op{d}\!{f_2}(\cdot),\op{d}\!{f_2}(\cdot))={f_2}^{*}\omega(\cdot,\cdot)\\
                                                                         &=&\sigma(\cdot,\cdot)
\end{eqnarray*}
\end{proof}

 \begin{lemma} \label{clmod2}
Consider two $(j,J)$-holomorphic immersions $f_1 \colon \Sigma \to M$ and $f_2 \colon \Sigma \to M$. 
If $\phi$ is a diffeomorphism $\Sigma \to \Sigma$ such that  $f_1 \circ \phi =f_2$,
then $\phi \in \Aut(\Sigma,\,j)$.
\end{lemma}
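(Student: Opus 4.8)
The plan is to mimic the computation in the proof of Lemma \ref{clmod1}, replacing the symplectic forms by the almost complex structures and the condition ``pulls back $\sigma$ to $\sigma$'' by ``intertwines $j$ and $J$''. The one new ingredient compared with Lemma \ref{clmod1} is that the computation will produce an identity of the form $\op{d}\!f_1 \circ A = \op{d}\!f_1 \circ B$, which we want to simplify to $A=B$; this is exactly where the immersion hypothesis on $f_1$ enters, since it guarantees that $\op{d}\!f_1$ is fibrewise injective and can be cancelled on the left.

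Concretely, I would fix a point $x \in \Sigma$, write the chain rule $\op{d}\!(f_2)_x = \op{d}\!(f_1)_{\phi(x)} \circ \op{d}\!\phi_x$, and then use that $f_1$ is $(j,J)$-holomorphic (so $\op{d}\!(f_1)_{\phi(x)} \circ j_{\phi(x)} = J \circ \op{d}\!(f_1)_{\phi(x)}$) and that $f_2 = f_1 \circ \phi$ is $(j,J)$-holomorphic (so $\op{d}\!(f_2)_x \circ j_x = J \circ \op{d}\!(f_2)_x$) to compute
\[
\op{d}\!(f_1)_{\phi(x)} \circ \op{d}\!\phi_x \circ j_x = \op{d}\!(f_2)_x \circ j_x = J \circ \op{d}\!(f_2)_x = J \circ \op{d}\!(f_1)_{\phi(x)} \circ \op{d}\!\phi_x = \op{d}\!(f_1)_{\phi(x)} \circ j_{\phi(x)} \circ \op{d}\!\phi_x .
\]
Since $f_1$ is an immersion, $\op{d}\!(f_1)_{\phi(x)} \colon \op{T}_{\phi(x)}\Sigma \to \op{T}_{f_1(\phi(x))}M$ is injective, so cancelling it on the left gives $\op{d}\!\phi_x \circ j_x = j_{\phi(x)} \circ \op{d}\!\phi_x$. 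As $x$ is arbitrary, $\phi$ is $(j,j)$-holomorphic; being in addition a diffeomorphism, it is a biholomorphism of $(\Sigma,\,j)$ (the inverse of a holomorphic diffeomorphism of a surface is automatically holomorphic), i.e.\ $\phi \in \Aut(\Sigma,\,j)$.

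I do not expect any genuine obstacle: the argument is a one-line chain-rule computation. The only points requiring a little care are bookkeeping of base points (the differential of $f_1$ and the relevant copy of $j$ on the target side must both be taken at $\phi(x)$, not at $x$) and explicitly invoking the immersion hypothesis to license the cancellation of $\op{d}\!f_1$; without it the conclusion would only be that $\phi$ is holomorphic on the (open, dense) set where $\op{d}\!f_1$ has full rank. If one prefers a point-free formulation, the same computation can be phrased as an identity $\op{d}\!f_1 \circ (\op{d}\!\phi \circ j) = \op{d}\!f_1 \circ (j \circ \op{d}\!\phi)$ of bundle maps $\op{T}\!\Sigma \to f_1^{*}\op{T}\!M$, with $\op{d}\!f_1$ fibrewise injective.
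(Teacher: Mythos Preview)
Your proof is correct and follows exactly the same approach as the paper: the paper gives the same chain of equalities $\op{d}\!f_1 \circ \op{d}\!\phi \circ j = \op{d}\!f_2 \circ j = J \circ \op{d}\!f_2 = J \circ \op{d}\!f_1 \circ \op{d}\!\phi = \op{d}\!f_1 \circ j \circ \op{d}\!\phi$ and then invokes the immersion hypothesis on $f_1$ to cancel $\op{d}\!f_1$. Your version is actually a bit more careful than the paper's in tracking the base points explicitly.
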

\begin{proof}
\begin{eqnarray*}
\op{d}\!{f_1} \circ \op{d}\!{\phi} \circ j=\op{d}\!{f_2} \circ j=J \circ \op{d}\!{f_2}=J \circ \op{d}\!{f_1} \circ \op{d}\!{\phi}=\op{d}\!{f_1} \circ j \circ \op{d}\!\phi.
\end{eqnarray*}
Since $f_1$ is an immersion, we conclude $\op{d}\! \phi \circ j=j \circ \op{d}\!\phi$.
\end{proof}

\begin{lemma} \label{integcomp}
 Let $J$ be an almost complex structure on $M$, and $f \colon \Sigma \to M$ an immersed $(j,J)$-holomorphic curve. 
Assume that $J$ is $\omega$-compatible. If  $v$ is \emph{not} everywhere tangent to $f(\Sigma)$, then $\omega^D_{\red}([v],[\tilde{J}v])\neq 0$.
\end{lemma}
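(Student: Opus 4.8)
The plan is to reduce the assertion to the pointwise $\omega$-compatibility of $J$, using the explicit formula for the reduced form. First I would recall, from the proof of Theorem~\ref{lee}, that on representatives the reduced form is computed by $\omega^D_{\red}([v_1],[v_2])=\int_{\Sigma}\omega(v_1,v_2)\,\sigma$, and that the induced almost complex structure $\tilde{J}$ acts fiberwise: under the identification $\op{T}_{f}\op{C}^{\infty}(\Sigma,\, M)=\Gamma(f^{*}\op{T}\!M)$ one has $(\tilde{J}v)(x)=J_{f(x)}\big(v(x)\big)$. Taking $v$ and $\tilde{J}v$ as representatives, this gives
\[
\omega^D_{\red}([v],[\tilde{J}v])=\int_{\Sigma}\omega_{f(x)}\big(v(x),\,J_{f(x)}v(x)\big)\,\sigma .
\]

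Next I would invoke that $J$ is $\omega$-compatible, so that $g_{J}:=\omega(\cdot,J\cdot)$ is a Riemannian metric on $M$; hence the integrand equals $\|v(x)\|_{g_{J}}^{2}$, which is nonnegative and vanishes at $x$ precisely when $v(x)=0$. Since $\sigma$ is a volume form on $\Sigma$ and the integrand is continuous and nonnegative, the integral is nonnegative and vanishes only if $v\equiv 0$. Finally, the hypothesis that $v$ is not everywhere tangent to $f(\Sigma)$ produces a point $x_{0}\in\Sigma$ with $v(x_{0})\notin\op{d}\!f_{x_{0}}(\op{T}_{x_{0}}\Sigma)$, so in particular $v(x_{0})\neq 0$ and $v\not\equiv 0$; therefore $\omega^D_{\red}([v],[\tilde{J}v])=\int_{\Sigma}\|v(x)\|_{g_{J}}^{2}\,\sigma>0$, which is the claim.

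The substance of the lemma being this short, the only step that will require care is the first one: I must make sure that $\tilde{J}v$ is again a tangent vector to the orbit $\mathcal{N}$ (so that $[\tilde{J}v]$ and the representative formula make sense). When $J$ is integrable this is automatic, since $\tilde{J}$ is then the integrable complex structure on the moduli space that is compatible with ${\omega^{D}}_{(\Sigma, \, \sigma)}$ (see the Appendix); otherwise I would sidestep the issue by working with $i_{\mathcal{N}}^{*}{\omega^{D}}_{(\Sigma, \, \sigma)}$ and the relation $q^{*}\omega^D_{\red}=i_{\mathcal{N}}^{*}{\omega^{D}}_{(\Sigma, \, \sigma)}$ from Theorem~\ref{lee}, reducing everything to evaluating ${\omega^{D}}_{(\Sigma, \, \sigma)}(v,\tilde{J}v)$. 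I expect this bookkeeping, rather than the computation, to be the main obstacle. It is also worth noting that the hypothesis is phrased through tangency rather than through nonvanishing on purpose: in the moduli-space quotient, ``$v$ everywhere tangent to $f(\Sigma)$'' is the translation of ``$[v]=0$'', so this lemma is exactly the non-degeneracy input needed for Corollary~\ref{mod}.
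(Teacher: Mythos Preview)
Your computation $\int_{\Sigma}\omega(v,Jv)\,\sigma>0$ is correct, but the identity
\[
\omega^D_{\red}([v],[\tilde{J}v])=\int_{\Sigma}\omega(v,Jv)\,\sigma
\]
is not: the right-hand side depends on the representative, not only on the class. If $v$ is replaced by $v+\tau$ with $\tau$ everywhere tangent to $f(\Sigma)$ (so $[v+\tau]=[v]$ and, since $f$ is $(j,J)$-holomorphic, also $[\tilde{J}(v+\tau)]=[\tilde{J}v]$), the integral picks up an extra term $\int_{\Sigma}\|\tau\|^{2}_{g_J}\,\sigma>0$. So your formula cannot coincide with the reduced form. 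Your proposed ``sidestep'' via $q^{*}\omega^D_{\red}=i_{\mathcal{N}}^{*}\omega^D_{(\Sigma,\sigma)}$ does not fix this, because that identity only lets you evaluate $\omega^D_{\red}$ on $dq$ of vectors tangent to $\mathcal{N}$; you still need $\tilde{J}v\in \op{T}_f\mathcal{N}$, which is exactly the unresolved point. Notice also that your argument never uses that $f$ is $(j,J)$-holomorphic.

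The paper's proof supplies the missing ingredient. It decomposes $v=\xi_v+\tau_v$ into the $\omega$-orthogonal part $\xi_v$ and the tangent part $\tau_v$, and then uses Lemma~\ref{jclaim}: because $f$ is $(j,J)$-holomorphic and $J$ is $\omega$-compatible, $\tilde{J}$ preserves this splitting, so $[\tilde{J}v]=[\tilde{J}\xi_v]$ with $\tilde{J}\xi_v$ again $\omega$-orthogonal. Both $\xi_v$ and $\tilde{J}\xi_v$ are then restrictions of Hamiltonian vector fields (Lemma~\ref{path}), hence legitimate representatives for which the formula applies, giving
\[
\omega^D_{\red}([v],[\tilde{J}v])=\int_{\Sigma}\omega(\xi_v,J\xi_v)\,\sigma,
\]
which is positive because $\xi_v\neq 0$ (this is exactly the hypothesis that $v$ is not everywhere tangent). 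In short, the decomposition is not bookkeeping; it is the mechanism that produces representatives on which the reduced-form formula is valid, and it is precisely where the $(j,J)$-holomorphicity of $f$ is used.
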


\begin{proof}
Decompose $v= \xi_v+\tau_v$, with
 $\xi_v$ everywhere $\omega$-orthogonal to $\Sigma$ and $\tau_v$ everywhere tangent to $\Sigma$. 
 By assumption $\xi_v \neq 0$. Then $\tilde{J}v=\tilde{J}\xi_v+\tilde{J}\tau_v$ with $\tilde{J}\xi_v \neq 0$. By Lemma \ref{jclaim}, $[\tilde{J}v]=[\tilde{J}\xi_v] \neq 0$. 
 Thus, since $J$ is $\omega$-tamed and $\sigma$ is an area form, $\omega^D_{\red}([v],[\tilde{J}v])=\omega^D_{\red}([\xi_v],[\tilde{J}\xi_v])=\int_{\Sigma} \omega(\xi_v,J(\xi_v)) \, \sigma \neq 0$.
 
\end{proof}

\begin{proof}[Proof of Corollary \ref{mod}]
    $\Ham^{J}(M, \, \omega)$ is a closed subgroup of $\Ham(M, \, \omega)$.
    (Notice that a diffeomorphism $\phi \colon M \to M$ is $J$-holomorphic iff $\phi^{-1}$ is.)  
       Since  $J$ is regular for the projection map $p_{A} \colon \M_{\scriptop{i}}(A,\,\Sigma, \,\mathcal{J}) \to \J,$ the space $\M_{\scriptop{e}}(A,\,\Sigma,\,J)$  of embedded $(j,J)$-holomorphic $\Sigma$-curves in a homology class $A \in \op{H}_2(M,\,\Z)$ is a finite-dimensional manifold  \cite[Thm 3.1.5]{MS2}.
  By Lemma \ref{osubset}, the group  $\Ham^{J}(M, \, \omega)$ acts on $\M_{\scriptop{e}}(A,\,\Sigma,\,J)$ by composition on the left.    Since the action of $\Ham^{j}(M,\, \omega)$ on  $\op{C}^{\infty}(\Sigma,\, M)$ preserves both $\omega^D_{(\Sigma, \, \sigma)}$ and the almost complex structure $ \tilde{J} \colon \op{T}\!  \op{C}^{\infty}(\Sigma,\, M) \rightarrow  \op{T}\! \op{C}^{\infty}(\Sigma,\, M)$ induced from 
     $J \colon \op{T}M \to \op{T}M$, it also preserves the map defined by 
     $$\tilde{g}(\tau_1,\tau_2)=\omega^{D}_{(\Sigma, \, \sigma)}(\tau_1, \tilde{J}\tau_2).$$
     By Lemma \ref{jinteg}, the manifold $\M_{\op{e}}(A, \,\Sigma,\,J)$ is closed under $\tilde{J}$, hence by Lemma \ref{integcomp2}, the restriction of $\omega^{D}_{(\Sigma, \, \sigma)}$ to  $\M_{\op{e}}(A, \,\Sigma,\,J)$ is non-degenerate, so the map $\tilde{g}$ restricts to a metric on $\M_{\op{e}}(A, \,\Sigma,\,J)$, and   $\Ham^{J}(M, \, \omega)$ acts on the moduli space as a subgroup of the isometry group. Since the action of the isometry group on a finite-dimensional manifold is proper and $\Ham^{J}(M, \, \omega)$ is a closed subgroup, every orbit of $\Ham^{J}(M, \, \omega)$ is an embedded submanifold of $\M_{\op{e}}(A, \,\Sigma,\,J)$.
     
     Let $\mathcal{N}$ be an orbit of $\Ham^{J}(M, \, \omega)$ through an embedded $(j,J)$-holomorphic curve $f \colon \Sigma \to M$ for which $f^{*}\omega=\sigma$. By Lemma \ref{osubset}, the orbit $\mathcal{N} \subset \M_{\op{e}}(A, \,\Sigma,\,J) \cap \mathcal{S}_{\op{e}}(\Sigma, \, \sigma)$. Thus, by Lemma \ref{vanish} and Lemma \ref{tangent}, the reductions  $\omega^D_{\red}$ and $\omega^{\red}_{\mathcal{S}_{\op{i}}(\Sigma)}$ are well defined on the quotient  $\mathcal{N}$ modulo $\Diff(\Sigma, \, \sigma)$.
  By Lemmata \ref{clmod1} and \ref{clmod2}, $\mathcal{N}$ modulo $\Aut(\Sigma, \, j)$ is the same as $\mathcal{N}$ modulo $\Diff(\Sigma, \, \sigma)$.
   By Lemma \ref{integcomp},  the form $\omega^D_{\red}$ restricted to $\mathcal{N}$
 modulo $\Aut(\Sigma, \, j)$ is non-degenerate. 
 By Lemma \ref{exact}, 
  the form $\omega^{\red}_{\mathcal{S}_{\op{i}}(\Sigma)}$ coincides with the form $2\omega^D_{\red}$ on the quotient.
\end{proof}

\begin{remark} \label{remregst}
For examples of regular integrable compatible almost complex structures, we look at K\"ahler manifolds whose automorphism groups act transitively. By \cite[Proposition 7.4.3]{MS2}, if $(M, \, \omega_0,\,J_0)$ is a compact K\"ahler manifold and $G$ is a Lie group that acts transitively on $M$ by holomorphic diffeomorphisms, then $J_0$ is regular for every $A \in \op{H}_2(M,\,\Z)$. This applies, e.g., when $M = \C P^n$, $\omega_0$ the Fubini-Study form, $J_0$ the standard complex structure on $\C P^n$, and $G$ is the automorphism group $\PSL(n+1)$, 
\end{remark}

{\bf Acknowledgements}. 
I thank Yael Karshon for many helpful discussions. I am also grateful to Dusa McDuff for valuable observations.

\appendix

\section{Appendix : a symplectic form and compatible almost complex structures on the space of immersed surfaces} \label{sec1}

To prove the closedness part of Proposition \ref{thclonon2}, we first show the following claim.

\begin{claim} \label{clpi}

For $\tau_1, \, \tau_2 \colon \Sigma \rightarrow f^{\ast}(\op{T}\! M)$, we have 
$$\omega(\tau_1, \, \tau_2) \, \sigma = \frac{1}{2} \iota_{(\tau_1,0), \, (\tau_2,0)} (\op{ev}^{\ast} ( \omega))  \wedge  (\pi_{\Sigma}^{\ast}(\sigma)),$$
where $\pi_{\Sigma} \colon \op{C}^{\infty}(\Sigma, \, M) \times \Sigma \to \Sigma$ is the projection 
onto the second coordinate. 

\end{claim}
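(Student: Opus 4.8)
The plan is to compute the right-hand side pointwise at a pair $(f,x)$ by evaluating the $4$-form $(\op{ev}^{\ast}\omega) \wedge (\pi_\Sigma^{\ast}\sigma)$ on a convenient frame of $\op{T}_{(f,x)}(\op{C}^\infty(\Sigma,M)\times\Sigma)$ and contracting with the two lifted vectors $(\tau_1,0)$ and $(\tau_2,0)$. First I would fix a basis $e_1,e_2$ of $\op{T}_x\Sigma$, so that $\op{T}_{(f,x)}(\op{C}^\infty(\Sigma,M)\times\Sigma)$ splits as $\op{T}_f\op{C}^\infty(\Sigma,M)\oplus \op{T}_x\Sigma$, with the $\Sigma$-directions represented by $(0,e_1),(0,e_2)$. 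The key computational input is Lemma \ref{trivial:lem}: $\op{d}(\op{ev})_{(f,x)}(\nu,v_\Sigma)=\nu_f(x)+\op{d}\!f_x(v_\Sigma)$, so $\op{d}(\op{ev})(\tau_i,0)=\tau_i(x)$ and $\op{d}(\op{ev})(0,e_k)=\op{d}\!f_x(e_k)$. Also $\pi_\Sigma^{\ast}\sigma$ is supported entirely on the $\op{T}_x\Sigma$-factor and equals $\sigma_x$ there.

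Next I would carry out the contraction. Since $\iota_{(\tau_1,0)}\iota_{(\tau_2,0)}$ of a $4$-form is a $2$-form, I evaluate it on the pair $(0,e_1),(0,e_2)$. Because $\pi_\Sigma^{\ast}\sigma$ kills any argument with a nonzero $\op{C}^\infty(\Sigma,M)$-component, in expanding $(\op{ev}^{\ast}\omega)\wedge(\pi_\Sigma^{\ast}\sigma)$ as an alternating sum over $2$-$2$ splits of the four arguments $\{(\tau_1,0),(\tau_2,0),(0,e_1),(0,e_2)\}$, the only surviving terms are those in which $(0,e_1)$ and $(0,e_2)$ are both fed to the $\pi_\Sigma^{\ast}\sigma$-factor, which forces $(\tau_1,0),(\tau_2,0)$ to be fed to $\op{ev}^{\ast}\omega$. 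Collecting the combinatorial factor from the shuffle (the two ways of ordering the pair going into each factor, equivalently the $\binom{4}{2}$ expansion restricting to one nonzero term with multiplicity), one gets
$$
\iota_{(\tau_1,0),(\tau_2,0)}\big((\op{ev}^{\ast}\omega)\wedge(\pi_\Sigma^{\ast}\sigma)\big)\big((0,e_1),(0,e_2)\big)
= 2\,\omega\big(\tau_1(x),\tau_2(x)\big)\,\sigma_x(e_1,e_2),
$$
which is exactly $2\,\omega(\tau_1,\tau_2)\,\sigma$ evaluated on $(e_1,e_2)$. Dividing by $2$ gives the claim, since both sides are $2$-forms on $\Sigma$ and agree on the arbitrary basis $(e_1,e_2)$.

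The only point requiring care is the wedge-product bookkeeping: one must use the convention for $\alpha\wedge\beta$ with $\alpha,\beta$ of degree $2$ (the sum over $(2,2)$-shuffles, each with sign) consistently with the convention used elsewhere in the paper for $\op{ev}^{\ast}(\omega\wedge\omega)$ in the Claim preceding Lemma \ref{exact}, so that the constant comes out as $2$ and not $1/2$ or $4$; this is where I expect the main (purely bookkeeping) obstacle to lie. Once the normalization is pinned down, the mixed terms vanish by the ``$\pi_\Sigma^{\ast}\sigma$ annihilates $\op{C}^\infty$-directions'' observation together with the fact that $\pi_\Sigma^{\ast}\sigma$ is a pullback of a $2$-form from $\Sigma$, and Lemma \ref{lemma3}-type degeneracy plays no role here — it is simply that a $2$-form on the $2$-dimensional $\op{T}_x\Sigma$ factor can only be nonzero on $(e_1,e_2)$. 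I would also remark that everything is pointwise in $x$ and the identities hold in the Convenient Setup because $\op{ev}$ is a smooth map of convenient manifolds and its differential is given by Lemma \ref{trivial:lem}.
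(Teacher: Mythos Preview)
Your proposal is correct and follows essentially the same approach as the paper: both expand the contraction $\iota_{(\tau_1,0),(\tau_2,0)}\big((\op{ev}^{\ast}\omega)\wedge(\pi_{\Sigma}^{\ast}\sigma)\big)$ directly and observe that the cross terms vanish because $\pi_{\Sigma}^{\ast}\sigma$ annihilates vectors of the form $(\tau_i,0)$ (equivalently, $\op{d}\!\pi_{\Sigma}(\tau_i,0)=0$), leaving exactly the term $2\,\omega(\tau_1,\tau_2)\,\sigma$. The paper's proof is simply a terser version of your computation.
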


\begin{proof}

Indeed, 
 $$
 \iota_{(\tau_1,0), \, (\tau_2,0)}(\op{ev}^{\ast} ( \omega))  \wedge  (\pi_{\Sigma}^{\ast}(\sigma))(\cdot,\cdot)=
  2 \omega(\tau_1,\tau_2) \sigma(\cdot,\cdot) + 2 \omega(\tau_1, \op{d}\!f(\cdot)) \wedge \sigma (\op{d}\!\pi_{\Sigma}(\tau_2,0),\cdot).
 $$
 Notice that the second summand in the right-hand term always vanishes. 
 
 \end{proof}

\begin{proof}[Proof of the closedness part of Proposition \ref{thclonon2}]

We will show that for any
  two surfaces $R_1$ and $R_2$ in $\mathcal{S}_{\op{i}}(\Sigma)$, that are homologous relative
  to a common boundary $\partial R$,
  \begin{eqnarray} \label{xxx}
  \int_{R_1} {\omega^{D}}_{(\Sigma, \, \sigma)} = \int_{R_2} 
  {\omega^{D}}_{(\Sigma, \, \sigma)}.
  \end{eqnarray}
 See also  \cite[Prop.\ 12]{lee} for a proof that $\op{d}\! {\omega^{D}}_{(\Sigma, \, \sigma)} =0 $ in the Convenient Setup.
 
 Let $R_1$ and $R_2$ be two surfaces that are homologous relative
  to a common boundary $\partial R$.
 By the above claim,
  $$
  \int_{R_i} {\omega^{D}}_{(\Sigma, \, \sigma)} = \frac{1}{2} \int_{R_i \times \{x\}} \left( \int_{\{f\} \times
     \Sigma} (\op{ev}^{\ast} ( \omega))  \wedge  (\pi_{\Sigma}^{\ast}(\sigma))  \right) =
     \frac{1}{2} \int_{R_i \times \Sigma} (\op{ev}^{\ast} ( \omega))  \wedge  (\pi_{\Sigma}^{\ast}(\sigma)).
 $$
Since $R_1 \times \Sigma$ is homologous to $R_2 \times \Sigma$ relative to the
  boundary $\partial R \times \Sigma$, and $(\op{ev}^{\ast} ( \omega))  \wedge  (\pi_{\Sigma}^{\ast}(\sigma))$ is closed, we have that:
  $$
  \int_{R_1 \times \Sigma} (\op{ev}^{\ast} ( \omega))  \wedge  (\pi_{\Sigma}^{\ast}(\sigma))
      = \int_{R_2 \times \Sigma}(\op{ev}^{\ast} ( \omega))  \wedge  (\pi_{\Sigma}^{\ast}(\sigma)).
     $$
 Therefore,  we get (\ref{xxx}).
 
 \end{proof}

 \subsection*{Compatible almost complex structures}

An \emph{almost complex structure} on a manifold $M$ is an automorphism of
the tangent bundle, $$J \colon \op{T}\!M \to \op{T}\!M,$$ such that $J^2 = -\op{Id}$.
The pair $(M,\,J)$
is called an \emph{almost complex manifold}. 

An almost complex structure is
\emph{integrable} if it is induced from a complex manifold structure. In
dimension two any almost complex manifold is integrable (see, e.g.,
\cite[Th.~4.16]{MS}). In higher dimensions this is not true
\cite{calabi}.

\begin{definition} \label{defj}
  Let $J$ be an almost complex structure on $M$. We define a map
  $$
  \tilde{J}: \op{T}\!  \op{C}^{\infty}(\Sigma,\, M) \rightarrow 
  \op{T}\! \op{C}^{\infty}(\Sigma,\, M)
  $$
   as follows:
for $\tau \colon \Sigma \rightarrow f^{\ast} (\op{T}\!M)$, the vector $\tilde{J}(\tau)$
  is the section
  $
  \hat{J} \circ \tau,
  $
 where $\hat{J}$ is the map defined by the commutative diagram
   \begin{eqnarray} 
\xymatrix{ \ar @{} [dr] |{\circlearrowleft}
f^*(\op{T}\!M)  \ar[r]^{\hat{J}}      \ar[d]  &  f^*(\op{T}\!M)
                  \ar[d]   \\
                  \op{T}\!M \ar[r]^J   &       \op{T}\!M } \nonumber
\end{eqnarray}     
 \end{definition}
Due to the properties of the almost complex structure $J$, the map $\tilde{J}$ is an automorphism and ${\tilde{J}}^{2}=  -\op{Id}$.
\begin{claim} 
 Let $J$ be an almost complex structure on $M$.
 Then $\tilde{J}$ is an almost complex structure on $\op{C}^{\infty}(\Sigma,\, M)$. 
\end{claim}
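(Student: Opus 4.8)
The plan is to check that $\tilde J$ is a smooth vector bundle automorphism of $\op{T}\op{C}^{\infty}(\Sigma,\,M)$ covering the identity map; together with the relation $\tilde J^{2}=-\op{Id}$ noted just before the claim, this is precisely the statement that $\tilde J$ is an almost complex structure on $\op{C}^{\infty}(\Sigma,\,M)$ in the sense of the definition above.

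First I would invoke the description of the tangent bundle of a manifold of mappings: by \cite[42.17]{conv} there is a canonical diffeomorphism $\op{T}\op{C}^{\infty}(\Sigma,\,M)\cong\op{C}^{\infty}(\Sigma,\,\op{T}\!M)$, under which a tangent vector at $f$, that is a section $\tau\colon\Sigma\to f^{*}\op{T}\!M$, is sent to the map $\Sigma\to\op{T}\!M$, $x\mapsto\tau(x)\in\op{T}_{f(x)}M$, and the bundle projection becomes postcomposition with $\pi_{M}\colon\op{T}\!M\to M$. Reading off the commutative diagram in Definition \ref{defj}, the map $\hat J\colon f^{*}\op{T}\!M\to f^{*}\op{T}\!M$ acts over $x$ as $v\mapsto Jv$ on $\op{T}_{f(x)}M$; hence under the above identification $\tilde J$ is exactly postcomposition with $J\colon\op{T}\!M\to\op{T}\!M$, i.e.\ $\tilde J=\op{C}^{\infty}(\Sigma,\,J)$.

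The three structural properties are now obtained from the corresponding properties of $J$. Since $J$ preserves the fibers of $\op{T}\!M\to M$ we have $\pi_{M}\circ J=\pi_{M}$, so $\pi_{M}\circ(J\circ\tau)=\pi_{M}\circ\tau$ and $\tilde J$ maps $\op{T}_{f}\op{C}^{\infty}(\Sigma,\,M)$ into itself, covering the identity map of $\op{C}^{\infty}(\Sigma,\,M)$. Since $J$ is linear on each fiber of $\op{T}\!M$, the assignment $\tau\mapsto\hat J\circ\tau$ is linear on $\Gamma(f^{*}\op{T}\!M)$, so $\tilde J$ is fiberwise linear. Smoothness of $\tilde J$ follows from the smoothness of $J$ together with the smoothness of composition in the convenient category \cite[42.13]{conv}; applying the same to $J^{-1}=-J$ shows $\tilde J$ has a smooth inverse, so it is a vector bundle automorphism. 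Finally, $\tilde J^{2}=\op{C}^{\infty}(\Sigma,\,J)\circ\op{C}^{\infty}(\Sigma,\,J)=\op{C}^{\infty}(\Sigma,\,J^{2})=\op{C}^{\infty}(\Sigma,\,-\op{Id}_{\op{T}\!M})$, which is the fiberwise negation of tangent vectors, i.e.\ $-\op{Id}$ on $\op{T}\op{C}^{\infty}(\Sigma,\,M)$. The only step that genuinely uses the Convenient Setup rather than a pointwise argument is the smoothness of $\tilde J$, and this reduces to the functoriality of $\op{C}^{\infty}(\Sigma,\,-)$; everything else is a fiberwise translation of the corresponding property of $J$.
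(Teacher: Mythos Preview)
Your argument is correct and follows the same line as the paper, which in fact offers no proof beyond the sentence preceding the claim (``Due to the properties of the almost complex structure $J$, the map $\tilde{J}$ is an automorphism and $\tilde{J}^{2}=-\op{Id}$''). You have simply filled in the details the paper leaves implicit, in particular the smoothness of $\tilde J$ via the functoriality of $\op{C}^{\infty}(\Sigma,\,-)$ in the Convenient Setup.
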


 An almost complex
structure $J$ on $M$ is 
\emph{tamed} by a symplectic form $\omega$ if  $\omega_{x}(v,\,Jv) >0$ for all non-zero $v \in \op{T}_x M$. If, in addition, $\omega_{x}(Jv,\,Jw)=\omega_{x}(v,\,w)$ for all $v, \, w \in T_x M$, we say that $J$ is \emph{$\omega$-compatible}. 
The space $\J=\mathcal{J}(M, \, \omega)$ of $\omega$-compatible almost complex structures is non-empty and contractible, in particular path-connected \cite[Prop.~4.1]{MS}.

\begin{lemma} \label{integcomp2}
Let $J$ be an almost complex structure on $M$.
\begin{enumerate}
\item If $J$ is $\omega$-tamed, then the induced almost complex structure $\tilde{J}$ on the space of immersions $\Sigma \to M$  is $ {\omega^{D}}_{(\Sigma, \, \sigma)} $-tamed. 
\item If $J$ is $\omega$-compatible, then $\tilde{J}$ is $  {\omega^{D}}_{(\Sigma, \, \sigma)}$-compatible. 
\end{enumerate}
\end{lemma}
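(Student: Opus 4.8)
The plan is to reduce both claims to pointwise statements on $M$ and then integrate against the volume form $\sigma$. First I would recall that, by the claim immediately preceding the lemma, $\tilde{J}$ is a genuine almost complex structure on $\op{C}^{\infty}(\Sigma,\, M)$, so it preserves each tangent space $\op{T}_f\op{C}^{\infty}(\Sigma,\, M)$; restricting to the (open) subspace of immersions and using the defining commutative diagram, for $\tau \colon \Sigma \to f^{*}(\op{T}\!M)$ one has $(\tilde{J}\tau)(x) = J_{f(x)}(\tau(x))$ at every $x \in \Sigma$.

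For part (1), I would fix an immersion $f$ and $0 \neq \tau \in \op{T}_f\op{C}^{\infty}(\Sigma,\, M)$, and compute, directly from the definition of ${\omega^{D}}_{(\Sigma, \, \sigma)}$,
$$
({\omega^{D}}_{(\Sigma, \, \sigma)})_f(\tau, \tilde{J}\tau) = \int_{\Sigma} \omega_{f(x)}\bigl(\tau(x),\, J_{f(x)}\tau(x)\bigr)\, \sigma.
$$
Since $J$ is $\omega$-tamed, the integrand is a continuous function on $\Sigma$ that is $\geq 0$ everywhere and $> 0$ wherever $\tau(x) \neq 0$; as $\tau$ is a non-zero smooth section, the set $\{x \in \Sigma : \tau(x) \neq 0\}$ is non-empty and open, so by continuity the integrand is bounded below by a positive constant on some non-empty open subset of $\Sigma$. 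Because $\sigma$ is a volume form, the integral is then strictly positive, which is exactly tamedness of $\tilde{J}$ by ${\omega^{D}}_{(\Sigma, \, \sigma)}$.

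For part (2), it remains to verify the invariance $({\omega^{D}}_{(\Sigma, \, \sigma)})_f(\tilde{J}\tau_1, \tilde{J}\tau_2) = ({\omega^{D}}_{(\Sigma, \, \sigma)})_f(\tau_1, \tau_2)$ for all $\tau_1, \tau_2 \in \op{T}_f\op{C}^{\infty}(\Sigma,\, M)$ (tamedness is part (1), and ${\tilde{J}}^{2} = -\op{Id}$ is already recorded). Here I would simply integrate against $\sigma$ the pointwise identity $\omega_{f(x)}(J_{f(x)}\tau_1(x),\, J_{f(x)}\tau_2(x)) = \omega_{f(x)}(\tau_1(x),\, \tau_2(x))$, valid for every $x$ by the $\omega$-compatibility of $J$; this yields the desired equality of the two $2$-forms evaluated on $(\tau_1, \tau_2)$, completing the proof.

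I do not expect a genuine obstacle: the only step requiring a word of care is the passage in part (1) from ``non-negative integrand, positive somewhere'' to ``strictly positive integral'', which relies on the continuity of the integrand (to obtain a uniform positive lower bound on a smaller open set) together with $\sigma$ being a volume form. Notably, connectedness of $\Sigma$ plays no role in this argument.
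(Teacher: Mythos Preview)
Your proof is correct and follows essentially the same approach as the paper: both parts reduce to integrating the corresponding pointwise identity against $\sigma$, with part (1) using that the integrand $\omega(\tau, J\tau)$ is non-negative and strictly positive on a non-empty open set. Your version is slightly more detailed in justifying the strict positivity of the integral (the paper simply cites that $J$ is $\omega$-tamed, $f$ is an immersion, and $\sigma$ is an area form), and your remark that connectedness of $\Sigma$ is not needed here is accurate.
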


\begin{proof}
\begin{enumerate}
\item
For a non-zero vector field $\tau \colon \Sigma \to f^{*}\op{T}M$,
   \begin{eqnarray} 
        {{\omega^{D}}_{(\Sigma, \, \sigma)}}_f( \tau, \,\tilde{J}(\tau)):=\int_{\Sigma} \omega(\tau,J(\tau)) \, \sigma > 0, \nonumber
  \end{eqnarray}
 the last inequality is since $J$ is $\omega$-tamed, $f$ is an immersion, and $\sigma$ is an area-form.

\item
For $\tau_1, \, \tau_2 \colon \Sigma \to  f^{*}\op{T}\!M$,
 \begin{eqnarray} 
        {{\omega^{D}}_{(\Sigma, \, \sigma)}}_f( \tilde{J}(\tau_1), \,\tilde{J}(\tau_2)):=\int_{\Sigma} \omega(J (\tau_1), \, J(\tau_2)) \, \sigma = \int_{\Sigma} \omega(\tau_1, \,\tau_2) \, \sigma=  { {\omega^{D}}_{(\Sigma, \, \sigma)}}_f(\tau_1, \,\tau_2) , \nonumber
  \end{eqnarray}
since $J$ is $\omega$-compatible. 

\end{enumerate}
\end{proof}

\begin{proof}[Proof of the non-degeneracy part of Proposition \ref{thclonon2}]
It follows from part (1) of Lemma \ref{integcomp2}, and the fact that the space of $\omega$-tamed structures is not empty. 
\end{proof}

\bigskip

 Fix $\Sigma=(\Sigma, \, j)$. 
A smooth ($\op{C}^{\infty}$) curve $f \colon \Sigma \rightarrow M$ 
is called
$J$\--\emph{holomorphic} if the differential $\op{d}\!f$ is a complex linear map
between the fibers $\op{T}_{p}(\Sigma)\rightarrow \op{T}_{f(p)}(M)$ for all
$p \in
\Sigma$, i.e. 
\[\op{d}\!f_{p}\circ {j}_{p} = {J}_{f(p)}\circ
\op{d}\!f_{p}.\]
A $J$-holomorphic curve is 
 \emph{simple} if it cannot be factored through a branched covering of $\Sigma$.
  The moduli space 
$\M_{\scriptop{i}}(A,\,\Sigma,\,J)$ of simple immersed $J$-holomorphic $\Sigma$-curves in a homology class $A \in \op{H}_2(M,\,\Z)$. 
 We look at almost complex structures that are regular for the projection map $$p_{A} \colon \M_{\scriptop{i}}(A,\,\Sigma, \,\mathcal{J}) \to \J;$$ for such a  $J$, the space $\M_{\scriptop{i}}(A,\,\Sigma,\,J)$ is a finite-dimensional manifold \cite[Thm. 3.1.5]{MS2}.

 \begin{lemma} \label{jinteg}
If $J$ is an integrable almost complex structure on $M$ that is regular for $A$, and $f \colon \Sigma \to M$ is a $J$-holomorphic map in $A$, then for $\tau \in \op{T}_{f}\M(A, \,\Sigma,\,J)$, the vector $J \circ \tau$ is also in  $\op{T}_{f}\M(A, \,\Sigma,\,J)$.
\end{lemma}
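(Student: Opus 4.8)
The plan is to identify $\op{T}_f\M(A,\,\Sigma,\,J)$ with the kernel of the linearized Cauchy--Riemann operator at $f$ and then to use that, for integrable $J$, this operator is complex linear. First I would recall the setup behind \cite[Thm.\ 3.1.5]{MS2}: near a regular point the moduli space is the zero set of the section $u \mapsto \bar\partial_J u = \tfrac12\bigl(\op{d}\!u + J(u)\circ\op{d}\!u\circ j\bigr)$ over a suitable space of maps $\Sigma\to M$, and the vertical derivative of this section at a $(j,J)$-holomorphic $f$ is a real-linear Cauchy--Riemann operator $D_f\colon \Omega^0(\Sigma,f^*\op{T}\!M)\to\Omega^{0,1}(\Sigma,f^*\op{T}\!M)$. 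Since $J$ is assumed regular for $A$, the operator $D_f$ is surjective, the moduli space is a finite-dimensional manifold near $f$, and $\op{T}_f\M(A,\,\Sigma,\,J)=\ker D_f$.

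The main input is the complex linearity of $D_f$ when $J$ is integrable, which I would quote from \cite[Sec.\ 3.1]{MS2}: in general the complex-antilinear part of $D_f$ is a zeroth-order term built from the Nijenhuis tensor $N_J$ paired against $\partial_J f$, and hence it vanishes identically once $N_J\equiv 0$; equivalently, for integrable $J$ the bundle $f^*\op{T}\!M$ is a holomorphic vector bundle over $(\Sigma,j)$ and $D_f$ differs from its Dolbeault operator by a complex-linear zeroth-order term. In either formulation one obtains $D_f(\tilde J\tau)=\tilde J(D_f\tau)$ for all $\tau$, where on the left $\tilde J$ is post-composition with $J$ on sections as in Definition \ref{defj}, and on the right $\tilde J$ is the analogous post-composition with $J$ on $f^*\op{T}\!M$-valued $(0,1)$-forms.

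Granting this, the lemma is immediate: if $\tau\in\op{T}_f\M(A,\,\Sigma,\,J)=\ker D_f$, then $D_f(J\circ\tau)=D_f(\tilde J\tau)=\tilde J(D_f\tau)=0$, so $J\circ\tau=\tilde J\tau\in\ker D_f=\op{T}_f\M(A,\,\Sigma,\,J)$. I expect the only step needing genuine care to be the complex linearity of $D_f$ for integrable $J$ --- everything afterwards is formal --- and, within that, keeping straight the two complex structures (on sections of $f^*\op{T}\!M$ and on $f^*\op{T}\!M$-valued $(0,1)$-forms) so that the statement being quoted is exactly that $D_f$ intertwines them. Regularity of $J$ enters only to guarantee that $\op{T}_f\M(A,\,\Sigma,\,J)$ really equals all of $\ker D_f$ (rather than merely being contained in it) and that the moduli space is smooth at $f$; note also that simpleness, immersedness, or embeddedness of $f$ are open conditions and so play no role in this tangent-space computation.
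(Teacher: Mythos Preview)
Your argument is correct and is exactly the standard one: identify $\op{T}_f\M(A,\Sigma,J)$ with $\ker D_f$ via regularity, then invoke the fact (from \cite[\S 3.1]{MS2}) that the complex-antilinear part of $D_f$ is a zeroth-order term built from the Nijenhuis tensor, hence vanishes when $J$ is integrable, so $\ker D_f$ is $\tilde J$-invariant. The paper does not actually supply its own proof of this lemma --- it simply cites \cite[Lem.~3.3]{ckp} --- and the argument there is the same as yours, so there is nothing to compare.
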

 
 For proof, see, e.g., \cite[Lem. 3.3]{ckp}.

As a result of Lemma \ref{jinteg} and Lemma \ref{integcomp2}, we get the following proposition.
\begin{proposition} \label{intregsym}
  If $J$ is an integrable almost complex structure on $M$ that is compatible
  with $\omega$ and regular for $A$, then $ {\omega^{D}}_{(\Sigma, \, \sigma)}$ restricted to
  $\M_{\op{i}}(A,\,\Sigma,\,J) $ is symplectic.
\end{proposition}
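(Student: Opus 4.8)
The plan is to read off the statement from two facts already in place: the closedness of ${\omega^{D}}_{(\Sigma, \, \sigma)}$ on $\op{C}^{\infty}(\Sigma,\, M)$ (Proposition \ref{thclonon2}), and the $\tilde J$-invariance of the moduli space (Lemma \ref{jinteg}), together with the taming statement of Lemma \ref{integcomp2}. Since $J$ is regular for $A$, the moduli space $\M_{\op{i}}(A,\,\Sigma,\,J)$ is a \emph{finite-dimensional} manifold by \cite[Thm.\ 3.1.5]{MS2}; on a finite-dimensional manifold weak non-degeneracy, non-degeneracy and strong non-degeneracy all coincide, so it suffices to prove that the restriction of ${\omega^{D}}_{(\Sigma, \, \sigma)}$ to $\M_{\op{i}}(A,\,\Sigma,\,J)$ is closed and weakly non-degenerate.

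Closedness is automatic: $\M_{\op{i}}(A,\,\Sigma,\,J)$ is a submanifold of the space of immersions $\Sigma\to M$, which is open in $\op{C}^{\infty}(\Sigma,\, M)$, and the restriction of the closed form ${\omega^{D}}_{(\Sigma, \, \sigma)}$ to any submanifold is closed. For non-degeneracy, fix $f\in\M_{\op{i}}(A,\,\Sigma,\,J)$ and $0\ne\tau\in\op{T}_{f}\M_{\op{i}}(A,\,\Sigma,\,J)$. By Lemma \ref{jinteg} --- this is where integrability and regularity of $J$ enter --- the vector $\tilde J\tau=J\circ\tau$ again lies in $\op{T}_{f}\M_{\op{i}}(A,\,\Sigma,\,J)$. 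Since $J$ is $\omega$-compatible it is in particular $\omega$-tamed, so by part (1) of Lemma \ref{integcomp2} the induced almost complex structure $\tilde J$ is ${\omega^{D}}_{(\Sigma, \, \sigma)}$-tamed, whence
\[
({\omega^{D}}_{(\Sigma, \, \sigma)})_{f}(\tau,\,\tilde J\tau)=\int_{\Sigma}\omega(\tau,\, J\circ\tau)\,\sigma>0 .
\]
Thus $\tilde J\tau$ is a tangent vector to the moduli space pairing nontrivially with $\tau$ under ${\omega^{D}}_{(\Sigma, \, \sigma)}$, so the restriction is weakly non-degenerate; being also closed, it is symplectic.

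The only nontrivial input is Lemma \ref{jinteg}, proved in \cite[Lem.\ 3.3]{ckp}: that the tangent space to the moduli space is stable under post-composition with $J$. Without it there is no guaranteed $\tilde J$-image to test against, and for a merely $\omega$-compatible --- but non-integrable, or non-regular --- $J$ one can only say that $\tilde J$ tames ${\omega^{D}}_{(\Sigma, \, \sigma)}$ on the ambient space of immersions, not that the moduli space is an almost complex submanifold, so the taming trick would not close up. Once Lemma \ref{jinteg} is granted, the argument above is essentially the whole proof; the remaining steps are routine.
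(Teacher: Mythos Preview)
Your proof is correct and follows essentially the same approach as the paper, which simply states the proposition as a consequence of Lemma \ref{jinteg} and Lemma \ref{integcomp2}. Your additional remark that the moduli space is finite-dimensional (so weak and strong non-degeneracy coincide) is a helpful clarification beyond what the paper spells out.
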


\begin{lemma} \label{jclaim}
 Let $J$ be an almost complex structure on $M$.
Assume that $f \colon \Sigma \to M$ is $J$-holomorphic. Then, at $x \in \Sigma$, 
\begin{enumerate}
   \item[\textup{1.}] if $\tau_x \in \op{d}\!f_{x}( \op{T}_{x}\Sigma )$ then $J_{f(x)} (\tau_x) \in \op{d}\!f_{x}( \op{T}_{x}\Sigma )$; 
   \item[\textup{2.}] if $J$ is $\omega$-compatible and $\phi_x$ is $\omega$-orthogonal to  $\op{d}\!f_{x}( \op{T}_{x}\Sigma )$, then so is $J_{f(x)} (\phi_x)$.
 \end{enumerate}  
\end{lemma}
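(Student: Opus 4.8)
The plan is to reduce both claims to the defining identity of a $(j,J)$-holomorphic map, namely $\op{d}\!f_{x}\circ j_{x} = J_{f(x)}\circ \op{d}\!f_{x}$, combined with the elementary linear algebra of a compatible almost complex structure.

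First I would prove part (1): given $\tau_x \in \op{d}\!f_{x}(\op{T}_{x}\Sigma)$, write $\tau_x = \op{d}\!f_{x}(\xi)$ with $\xi \in \op{T}_{x}\Sigma$ and compute
$$J_{f(x)}(\tau_x) = J_{f(x)}(\op{d}\!f_{x}(\xi)) = \op{d}\!f_{x}(j_{x}\xi),$$
using the holomorphicity identity; since $j_{x}\xi \in \op{T}_{x}\Sigma$, the vector $J_{f(x)}(\tau_x)$ lies in $\op{d}\!f_{x}(\op{T}_{x}\Sigma)$. Phrased invariantly, this says that the two-plane $\op{d}\!f_{x}(\op{T}_{x}\Sigma) \subset \op{T}_{f(x)}M$ is $J$-invariant.

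For part (2) I would use that $\op{d}\!f_{x}(\op{T}_{x}\Sigma)$ is $J$-invariant by part (1), together with $J^{-1}=-J$. Let $\phi_x$ be $\omega$-orthogonal to $\op{d}\!f_{x}(\op{T}_{x}\Sigma)$ and let $w \in \op{d}\!f_{x}(\op{T}_{x}\Sigma)$ be arbitrary. Writing $w=-J(Jw)$ and applying $\omega$-compatibility $\omega(Ju,Jv)=\omega(u,v)$ gives
$$\omega_{f(x)}(J\phi_x,\,w) = -\omega_{f(x)}(J\phi_x,\,J(Jw)) = -\omega_{f(x)}(\phi_x,\,Jw);$$
by part (1), $Jw \in \op{d}\!f_{x}(\op{T}_{x}\Sigma)$, so the right-hand side vanishes and $J\phi_x$ is again $\omega$-orthogonal to $\op{d}\!f_{x}(\op{T}_{x}\Sigma)$. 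Equivalently, one may observe that $\omega$-compatibility makes $g(u,v):=\omega(u,Jv)$ an inner product whose orthogonal complement coincides with the $\omega$-orthogonal complement, and the $g$-orthogonal complement of a $J$-invariant subspace is automatically $J$-invariant.

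I do not expect a genuine obstacle here; the only subtlety is keeping track of the signs coming from $J^2=-\op{Id}$ and being sure to invoke full $\omega$-compatibility in part (2), since $\omega$-taming alone would not supply the identity $\omega(Ju,Jv)=\omega(u,v)$ that the argument needs.
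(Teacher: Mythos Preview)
Your proof is correct and matches the paper's argument essentially verbatim: part~(1) is identical, and for part~(2) both you and the paper use $J$-invariance of $\op{d}\!f_{x}(\op{T}_{x}\Sigma)$ from part~(1), the identity $J^{2}=-\op{Id}$, and $\omega(Ju,Jv)=\omega(u,v)$ to reduce $\omega(J\phi_x,w)$ to $\omega(\phi_x,\cdot)$ evaluated on a tangent vector. The only cosmetic difference is that the paper writes an arbitrary $w$ as $J\tau'_x$ using surjectivity of $J$ on the two-plane, whereas you write $w=-J(Jw)$ directly.
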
 

\begin{proof}
\begin{enumerate}
\item By assumption $\tau_x=\op{d}\! f_{x}(\alpha)$ for $\alpha \in \op{T}_x \Sigma$. Hence, since $f$ is $J$-holomorphic,
 $$J_{f(x)} (\tau_x)=J_{f(x)} (\op{d}\!f_{x}(\alpha))=\op{d}\! f_{x}(j_{x} \alpha).$$ 
\item By the previous item, $J_{f(x)} (\op{d}\!f_{x}( \op{T}_{x}\Sigma )) \subseteq \op{d}\!f_{x}( \op{T}_{x}\Sigma )$, hence, since $J^2=-\id$, $$J_{f(x)}(\op{d}\!f_{x}( \op{T}_{x}\Sigma ))=\op{d}\!f_{x}( \op{T}_{x}\Sigma ).$$
Let $\tau_x \in \op{d}\!f_{x}( \op{T}_{x}\Sigma )$, then there exists $\tau'_x \in \op{d}\!f_{x}( \op{T}_{x}\Sigma )$ such that $\tau_x=J_{f(x)}(\tau'_x)$. By assumption, $\omega(\phi_x, \tau'_x)=0$. Thus $$\omega(J_{f(x)}(\phi_x),\tau_x)=
\omega(J_{f(x)}(\phi_x),J_{f(x)}(\tau'_x))=\omega(\phi_x,\tau'_x)=0.$$

\end{enumerate}
\end{proof}

\begin{corollary} \label{corj}
Let $J$ be an $\omega$-tamed almost complex structure on $M$. Assume that $f \colon \Sigma \to M$ is $J$-holomorphic.  
Then  every $\mu \in \op{T}_f(\op{C}^{\infty}(\Sigma, \, M))$ can be uniquely decomposed as $$\mu=\mu'+\mu'',$$ where 
 $\mu' (x) \in  \op{d}\!f_{x} (\op{T}_{x} \Sigma)$ at every $x \in \Sigma$, and $\mu''(x)$ is $\omega$-orthogonal to  $\op{d}\!f_{x}( \op{T}_{x}\Sigma )$ at every $x \in \Sigma$. 
\end{corollary}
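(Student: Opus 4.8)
The plan is to build the decomposition fibrewise by linear algebra over each point of $\Sigma$, and then to verify that the two pieces depend smoothly on the point, so that they are genuine elements of $\op{T}_f(\op{C}^{\infty}(\Sigma, \, M)) = \Gamma(f^{*}\op{T}\!M)$.

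First I would fix $x \in \Sigma$ and set $V_x := \op{d}\!f_x(\op{T}_x\Sigma) \subseteq \op{T}_{f(x)}M$. By part (1) of Lemma \ref{jclaim}, whose proof uses only that $f$ is $(j,J)$-holomorphic, the subspace $V_x$ is invariant under $J_{f(x)}$. Since $J$ is $\omega$-tamed, every nonzero $v \in V_x$ satisfies $\omega_{f(x)}(v, J_{f(x)}v) > 0$ while $J_{f(x)}v \in V_x$, so $\omega_{f(x)}$ restricts to a non-degenerate $2$-form on $V_x$; in particular $V_x \cap V_x^{\omega} = \{0\}$, where $V_x^{\omega} := \{w \in \op{T}_{f(x)}M \mid \omega_{f(x)}(w, v) = 0 \text{ for all } v \in V_x\}$ is exactly the space of vectors $\omega$-orthogonal to $\op{d}\!f_x(\op{T}_x\Sigma)$. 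Because $\omega_{f(x)}$ is non-degenerate on all of $\op{T}_{f(x)}M$, the map $w \mapsto \omega_{f(x)}(w,\cdot)|_{V_x}$ from $\op{T}_{f(x)}M$ onto $V_x^{*}$ has kernel $V_x^{\omega}$, so $\dim V_x^{\omega} = \dim M - \dim V_x$ and hence $V_x \oplus V_x^{\omega} = \op{T}_{f(x)}M$. Therefore each $\mu(x) \in \op{T}_{f(x)}M$ has a unique expression $\mu(x) = \mu'(x) + \mu''(x)$ with $\mu'(x) \in V_x$ and $\mu''(x) \in V_x^{\omega}$.

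For the smoothness (Convenient Setup) part: $f$ is an immersion — it is an embedding, or an immersed $(j,J)$-holomorphic curve, in every application of this corollary — so $V := \bigcup_{x \in \Sigma} V_x$ is a smooth rank-$2$ subbundle of $f^{*}\op{T}\!M$. Its $\omega$-orthogonal complement $V^{\omega} := \bigcup_{x} V_x^{\omega}$ is then a smooth subbundle as well, being the kernel of the smooth bundle morphism $f^{*}\op{T}\!M \to V^{*}$, $w \mapsto \omega(w,\cdot)|_{V_x}$, which is fibrewise onto by the non-degeneracy of $\omega|_{V_x}$ shown above. By the fibrewise computation $f^{*}\op{T}\!M = V \oplus V^{\omega}$ as vector bundles, and the associated projections onto $V$ and onto $V^{\omega}$ are smooth bundle endomorphisms of $f^{*}\op{T}\!M$. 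Composing them with a section $\mu \in \Gamma(f^{*}\op{T}\!M)$ yields smooth sections $\mu'$ and $\mu''$, which realize the asserted decomposition in $\op{T}_f(\op{C}^{\infty}(\Sigma, \, M))$; uniqueness is inherited from the fibrewise uniqueness.

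The argument is short, and I expect the only delicate point to be the smoothness step: it requires $V$ to be an honest subbundle of $f^{*}\op{T}\!M$, i.e. that $\op{d}\!f_x$ be injective at every $x$ (without this, the symplectic projection onto $V_x$ can fail to extend continuously across a point where $\op{d}\!f$ degenerates). Everything else has already been packaged into Lemma \ref{jclaim} together with the tameness hypothesis.
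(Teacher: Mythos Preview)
Your proof is correct and follows essentially the same route as the paper: the pointwise argument uses part~(1) of Lemma~\ref{jclaim} and $\omega$-tameness to get $V_x \cap V_x^{\omega}=\{0\}$, then a dimension count gives the fibrewise splitting, and the passage to sections is handled by the direct-sum decomposition of bundles. Your explicit remark that the smoothness step requires $f$ to be an immersion is a useful observation that the paper leaves implicit.
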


\begin{proof}
At $x \in \Sigma$, if $ v \in W_x=\op{d}\!f_{x}( \op{T}_{x}\Sigma )$, then $J(v) \in W_x$ (by part (1) of Lemma \ref{jclaim}). Since $J$ is $\omega$-tamed, if $v \neq 0$,
$\omega(v,J(v))>0$, hence $0 \neq v \in W_x$ is not in  ${W_x}^{\omega}$. Thus $W_x \cap {W_x}^{\omega}=\{0\}$.
Since $\dim {W_x}^{\omega}=\dim M - \dim W_x$,  we deduce that  $\op{T}_{f(x)}M=W_x \oplus {W_x}^{\omega}$.

To conclude the corollary, recall that if a bundle $E \to B$ equals the direct sum of sub-bundles $E_1 \to B$ and $E_2 \to B$, then the space of sections of $E$ equals the direct sum of the space of sections of $E_1$ and the space of sections of $E_2$.

\end{proof}

\begin{remark} \label{extend}
Fix a symplectic form $\sigma$ on and a complex structure $j$ that is $\sigma$-compatible on $\Sigma$. Given a symplectic embedding $f \colon (\sigma,\Sigma) \to (M, \, \omega)$ there is a $J \in \J(M, \, \omega)$ such that $f$ is $(j,J)$-holomorphic. 
Define $J|_{\op{T}\!(f(\Sigma))}$ such that $f$ is holomorphic.
Extend it to a compatible fiberwise complex structure on the 
symplectic vector bundle $\op{T}\!{M}|_{f(\Sigma)}$.  
Then extend it to a compatible almost complex structure on $M$. 
See \cite[Section 2.6]{MS}. 
\end{remark}

\noindent

\bigskip\noindent
Liat Kessler\\
Mathematics Department, Technion\\
Technion city, Haifa 32000, Israel\\
{\em E\--mail}: lkessler@tx.technion.ac.il\\


\begin{thebibliography}{9999}


\bibitem{calabi} E.\ Calabi: Constructions and properties of some
$6$-dimensional almost complex manifolds, {\em Trans.\ Am.\
Math.\ Soc.} {\bf 87}(1958), 407--438.

\bibitem{ckp}
J.\ Coffey, L.\ Kessler, A.\ Pelayo: Symplectic geometry on moduli spaces of J-holomorphic curves, \emph{Ann.\ Glob.\ Anal.\ Geom.}(2011). doi: 10.1007/s10455-011-9281-1 


\bibitem{donaldson}
S.\ K.\ Donaldson: Moment maps and diffeomorphisms, \emph{Asian J. Math.}{\bf 3} (1999), no.~1, 1--16.

\bibitem{gromovcurves}
M.\ Gromov: Pseudo holomorphic curves in symplectic manifolds, 
\emph{Invent. Math. }{\bf 82} (1985), 307--347.  


\bibitem{conv}
A. \ Kriegl and P.\ W.\ Michor: \emph{The convenient setting of global analysis}, Mathematical Surveys and Monographs, 
Vol.\ {\bf 53}, American Mathematical Society, Providence, RI (1997). 

\bibitem{lee}
 B.\ Lee: Geometric Structures on Spaces of Weighted Submanifolds, \emph{SIGMA} {\bf 5} (2009), 099, (46 pages).
 
 \bibitem{secondlee} J-H.\ Lee: Symplectic geometry on symplectic knot spaces.  
\emph{N.\ Y.\ J.\ Math.}  {\bf 13}  (2007), 17--31.


\bibitem{MS}
D.\ McDuff and D.\ Salamon:
\emph{Introduction to Symplectic Topology}, 2nd Ed., Oxford University Press, Oxford (1998).

\bibitem{MS2}
D.\ McDuff and D.\ Salamon: \emph{J-Holomorphic Curves and Symplectic
Topology}, American  Mathematical Society, Providence, RI (2004).

\bibitem{moser}
J.\ Moser: On the volume elements on a manifold, \emph{Trans. Amer. Math. Soc.} {\bf 120} (1965) 286--294.

 
 \bibitem{ratiu}
J.\ P.\  Ortega and T.\ S.\ Ratiu: \emph{The optimal momentum map}. In: Geometry, Mechanics, and Dynamics,  
pp  329--362. Springer, New York (2002). 



\bibitem{stern}
S.\ Sternberg: On minimal coupling, and the symplectic mechanics of a classical particle in the presence 
of a Yang-Mills field, \emph{Proc. Nat. Acad. Sci. U.S.A.} {\bf 74} (1977), 5253--4. 

\bibitem{Weinstein}
A.\ Weinstein: Symplectic manifolds and their Lagrangian submanifolds, \emph{Advances in Math.} {\bf 6} 
(1971), 329--346. 

\end{thebibliography}
\end{document}